\newtheorem{theorem}{Theorem}[section]
\newtheorem{proposition}[theorem]{Proposition}
\newtheorem{lemma}[theorem]{Lemma}
\newtheorem{corollary}[theorem]{Corollary}
\newtheorem{question}[theorem]{Question}
\theoremstyle{definition}
\newtheorem{definition}[theorem]{Definition}
\newtheorem{remark}[theorem]{Remark}
\newcommand{\lra}{\longrightarrow}
\renewcommand{\Im}{\operatorname{Im}}
\newcommand{\ra}{\rightarrow}
\newcommand{\C}{\mathbb{C}}
\newcommand{\bb}{{\mathbb B}}
\newcommand{\tbb}{{\widetilde{\mathbb B}}}
\newcommand{\Z}{{\mathbb{Z}}}
\newcommand{\Aut}{\operatorname {Aut}}
\newcommand{\debar}{{\overline{\partial} } }
\newcommand{\Sym}{\operatorname{Sym}}
\newcommand{\OO}{\mathcal O}
\renewcommand{\SS}{{\widetilde {\mathcal S}}}
\newcommand{\Deltas}{\widetilde{\Delta}}
\numberwithin{equation}{section}
\begin{document}

\baselineskip=15pt

\title[Prym varieties and projective structures]{Prym 
varieties and projective structures on Riemann surfaces}

\author[I. Biswas]{Indranil Biswas}

\address{Department of Mathematics, Shiv Nadar University, NH91, Tehsil
Dadri, Greater Noida, Uttar Pradesh 201314, India}

\email{indranil.biswas@snu.edu.in, indranil29@gmail.com}

\author[A. Ghigi]{Alessandro Ghigi}
	
\address{Dipartimento di Matematica, Universit\`a di Pavia, via
Ferrata 5, I-27100 Pavia, Italy}
	
\email{alessandro.ghigi@unipv.it}
	
\author[L. Vai]{Luca Vai}
	
\address{Dipartimento di Matematica, Universit\`a di Pavia, via
Ferrata 5, I-27100 Pavia, Italy}
	
\email{luca.vai628@gmail.com, luca.vai01@universitadipavia.it}

\subjclass[2010]{14H10, 14H40, 14K25, 53B10}

\keywords{Prym variety, theta function, projective structure, moduli space of curves}

\begin{abstract} 
  Given an \'etale double covering
  $\pi\, :\, \widetilde{C}\, \longrightarrow\, C$ of compact Riemann
  surfaces with $C$ of genus at least two, we use the Prym variety of the cover to construct canonical projective structures on both
  $\widetilde C$ and $C$.  
  This construction can be interpreted as a section of an affine bundle over the moduli space of \'etale double
  covers.  
  The $\debar$--derivative of this section is a (1,1)--form
  on the moduli space. We compute this derivative in terms of Thetanullwert
  maps. 
  Using the Schottky--Jung identities we show that, in general,  the projective structure on $C$ depends on the cover.
\end{abstract}

\maketitle

\tableofcontents

\section{Introduction}

Let $C$ be a compact Riemann surface. A \emph{projective structure} on $C$ is the datum of a coordinate covering of $C$ such that all the transition functions are M\"obius transformations.
Two projective structures are considered the same if they are compatible, meaning their union is still a projective structure. 
Historically, the most important projective structure is the one given by the Uniformization Theorem. 
This projective structure is canonical, in the sense that its construction does not require making choices.

There are at least two other examples of canonical projective structure: one constructed via Hodge theory in \cite{BCFP}, another constructed using theta functions in \cite{BGV}. 

Although the complex analytic definition of a projective structure is very natural, studying them, for example comparing two different projective structures, is quite complicated.
The algebraic viewpoint --- presented for example in \cite{Tyu,GU,Hu} --- is useful for this.
The space $\mathcal{P}(C)$ of projective structures on $C$ has a natural structure of an affine space modelled on the complex vector space $H^0(C,\, 2K_C)$. One way to describe 
projective structures is using certain elements of 
$H^0(C\times C,\, K_{C\times C}(2\Delta))$ --- which are called ``symmetric bidifferentials of the second kind" in the classical terminology --- where $\Delta$ is the diagonal divisor in $C\times C$.

In the first part of the article, given a compact Riemann surface $C$ of genus $g\,\geq\, 2$, and an \'etale
double cover 
\begin{equation}
\label{pcc}
\pi\,:\,\widetilde{C} \,\longrightarrow \, C,
\end{equation} 
we use the theory of Prym varieties to
construct projective structures both on $\widetilde C$ and on $C$. 
Let us sketch briefly this construction, which is at the basis of the paper.

Given a principally polarized abelian variety $(A,\,\Theta)$, the line bundle $\mathcal{O}_A(2\Theta)$ is canonical, i.e., intrinsically defined, the 
complete linear system $\big\vert 2\Theta\big\vert$ is base-point free and the vector space $H^0(A,\, 2\Theta)$ admits a Hermitian inner product that is intrinsically defined up to multiplication by a positive constant. 
Denote by $\mathscr L$ the orthogonal complement 
in $H^0(A,\,2\Theta)$ 
to the space of sections that vanish at the origin. 
This is 
an intrinsically defined
1-dimensional subspace. 
In the paper \cite{BGV} this construction was applied to the Jacobian $A=JC$ of a curve $C$. It was proved that pulling back the sections in $\mathscr L$ via the difference map $(p,\,q) \,\longmapsto\, \OO_C(p-q)$ gives rise to elements in $H^0(C\times C, K_{C\times C} (2\Delta))$. Normalizing them appropriately one gets a projective structure on $C$.

This paper develops a Prym version of this construction.
Consider the Prym variety $P\, =\, P(\pi:\widetilde C\to C)$ of the double cover \eqref{pcc}. It is endowed with a natural principal polarization $\Xi$.
Therefore as above 
we get an intrinsically-defined line $\mathscr L \,\subset\, H^0(P,\,2\Xi)$.
Consider the Prym difference map:
$$\phi\ :\ \widetilde C\times \widetilde C\ \longrightarrow\ P,\ \ \, (p,\, q)\ 
\longmapsto\ \mathcal{O}_{\widetilde C}(p-\sigma(p)-q+\sigma(q)).$$
We prove that 
\begin{equation}
  \label{phistar}
\phi^\ast (2\Xi) \ \cong \ K_{\widetilde C\times \widetilde C}(2\widetilde \Delta-2\Sigma).
\end{equation}
where $\widetilde \Delta$ is the reduced diagonal in $\widetilde{C}\times \widetilde{C}$ and $\Sigma$ is the graph of the nontrivial involution $\sigma\, \in\, {\rm Gal}(\pi)$; see 
Theorem \ref{prop:fundamentalpullback}.
Consequently,
for $s\in \mathscr L$ the section
$\phi^\ast s$, after an appropriate normalization, gives rise to a projective structure $\beta^Q_\pi$ on $\widetilde C$.
Finally, we show that this projective structure on $\widetilde C$ is invariant with respect to $\sigma$, hence is the pull-back via $\pi$ of a unique projective structure on $C$
that we denote by $\beta^P_\pi$ (see Corollary \ref{cor:descent}). 

The last fact is somewhat surprising, since the Prym variety $P$ represents in some sense the part of the Jacobian $J(\widetilde C)$ which is anti-invariant with respect to the involution $\sigma$.

At this point two questions seem rather natural. 
 If we start from the curve $C$, the additional datum needed to construct the covering \eqref{pcc} is a nontrivial 2-torsion point $\eta \in JC[2]-\{0\}$. So the projective structure can be seen as a function of $(C,\eta)$.
 If instead we start from the curve $\widetilde{C}$, the additional datum needed to construct the covering is the fixed-point free involution $\sigma\in \Aut (\widetilde{C})$. So the projective structure $\beta^Q$ can be considered also as a function of $(\widetilde{C}, \sigma)$.
\begin{question}
\label{question:one}
Is the Prym projective structure $\beta^P$ on $C$ independent of the covering, in other words, is it
 independent of $\eta$?
\end{question}
\begin{question}
\label{question:two}
Is the Prym projective structure $\beta^Q$ on $\widetilde{C}$ independent of the covering, in other words, is it independent of the involution $\sigma$?
\end{question}

If Question \ref{question:one} has a positive answer, one might wonder whether it coincides with one of the previously known canonical projective structures, the uniformization, Hodge and Theta structures mentioned above. If the answer is negative, then $\beta^P$ is a different kind of object, hence something new.

Both questions seem rather hard to deal with explicitly, since the projective structures are quite elusive and it is not easy to determine the properties of a specific projective structure. In the rest of the article we take a different route, similar to that already taken in \cite{BCFP,BFPT,BGT,BGV,BV}:
instead of studying the projective structures $\beta^Q$ and $\beta^P$  obtained from a fixed covering, 
we  study them through their variation in moduli as $\widetilde C,\,C$ and $\pi$ vary. 
More precisely we compute the $\debar$--derivative of $\beta^P$ in an appropriate sense. 

Let us explain in more detail the contents of the rest of the paper.
As it is known \cite{ZT,BFPT,BGT}, a family of projective structures on $M_g$ can be seen as a section of a certain (orbifold) bundle. 
Indeed, let $V_g$ denote the moduli space parametrizing classes of pairs of the form $(C,\, \mathcal Z)$, where $C$ a compact Riemann surface of genus $g$ and $\mathcal Z$ is a projective structure on it.
The forgetful map $V_g\, \longrightarrow\, M_g$, that sends $[C,\, \mathcal Z]$ to $[C]$, is a holomorphic torsor on $M_g$ for the holomorphic cotangent bundle $\Omega^1_{M_g}$.
Canonical projective structures, which depend smoothly on the moduli point $[C]$, correspond to $\mathcal{C}^{\infty}$ sections of $V_g\, \longrightarrow\, M_g$.
Therefore, the holomorphic differential of a smooth section $\beta$ of $V_g$ gives a $C^\infty$ $(1,\, 1)$--form on $M_g$ (see \cite[Section 2]{BCFP}, \cite{BGT} or \S \, \ref{ss:projmarked} below for more details).
Computing the $\debar$--derivative is a very powerful technique for studying canonical projective structures.
For example, it can be used to prove that two projective structures are different; this idea has already been used, for example, in \cite{BCFP,BV}.

In the case of Jacobian, which was treated in the papers \cite{BGV} and \cite{BV}, the computation of the $\debar$--derivative was achieved by studying the pullback map
\begin{equation*}
  H^0(JC,2\Theta) \lra H^0(K_{C\times C} (2\Delta)).
\end{equation*}
The pullback of a section of $2\Theta$ can in fact be expressed in terms of its value and its second derivative at the origin, see Proposition 5.3 of \cite{BGV}. In the Prym case the pullback map 
\begin{equation*}
  H^0(P,2\Xi) \lra H^0(K_{\widetilde C\times \widetilde C} (2\widetilde \Delta-2\Sigma))
\end{equation*}
is not so well understood.
Although this map has been studied before (see \cite{IZ,IP}), no explicit description of it exists.
In fact, the proof of \cite[Prop.5.3]{BGV} relies heavily on Fay's trisecant formula (see \cite{Po}) and on its symmetries.
Geometrically, this formula says that the image of the map $JC\lra \mathbb{P}^N$ given by the complete linear system $|2\Theta|$ --- we think of it as the Kummer variety of $JC$ --- admits trisecant lines.
There is an analogous result for Prym varieties, in fact the Kummer of a Prym variety admits quadrisecant planes, see \cite{BD}.
However, as far as the authors know, a "quadrisecant identity" for Prym varieties that enjoys symmetry properties similar to those of the Fay trisecant identity does not exist.
This seems to be an obstacle in the full description of the Prym pullback map.
We hope we can return to the relation between the quadrisecants planes of the Kummer of Prym varieties and the pullback map $H^0(P,2\Xi) \lra H^0(K_{\widetilde C\times \widetilde C} (2\widetilde \Delta-2\Sigma))$ in the future.

In any case, it turns out that in order to compute the $\debar$--derivative of the Prym projective structure it is not necessary to fully describe the pullback map.
It is enough to have a description of the composition
\begin{equation*}
    H^0(P,2\Xi) \,\lra\, H^0(K_{\widetilde{C}\times \widetilde{ C}} (2\widetilde{\Delta})) \,
\lra\,
H^0(3\Deltas,K_{\widetilde{C}\times \widetilde{ C}} (2\widetilde{\Delta})\big\vert _{3\Deltas}).
\end{equation*}
This is what we are able to compute in the Prym case, see Lemma \ref{lemma:pull}.
The Lemma expresses the restriction of the pullback of a section of $2\Xi$ in terms of its value and its second derivatives at the origin.

We now describe the $\debar$--derivative of the Prym projective structure.
Denote by $R_g$ the moduli space   that parametrizes the \'etale double coverings $\pi\,:\, \widetilde{C}\, \longrightarrow\, C$ as above,  where $C$ is a compact Riemann surface of genus $g$ and $\widetilde C$ is a smooth connected curve (hence $\pi$ is a nontrivial covering). The forgetful map $\xi\,:\,R_g\,\longrightarrow\, M_g$ that sends  $[\pi:\widetilde{C}\ra C]$ to $[C]$ is an \'etale covering of $M_g$ of degree $2^{2g}-1$. The map 
$$\beta^P\ :\ R_g\ \longrightarrow\ V_g,\ \ \, [\pi: \widetilde C \ra C]\ \longmapsto\ [C, \beta^P_\pi ],$$
is a section of $\xi^\ast V_g$. Since $\xi^\ast V_g\, \longrightarrow\, R_g$ is an $\Omega_{R_g}^1$--torsor, one may again interpret $\debar \beta^P$ as a $(1,\, 1)$ form on $R_g$.
The main result  of the paper is the following
(see Theorem \ref{teo:debar} and Remark \ref{rem:debarwithpg}):
\begin{theorem}\label{thmi}
The $\debar$--derivative of $ \beta^P$ has the following expression:
 $$\debar \beta^P\ \, =\ \,8 \pi P_g^\ast \Theta_{g-1}^\ast \omega_{FS},$$
where $P_g\,:\, R_g\, \longrightarrow \, A_{g-1}$ is the Prym map, $\Theta_{g-1}\,:\,A_{g-1}(2,4)\, \longrightarrow\, \mathbb{P}^{2^{g-1}-1}$ is the second order Thetanullwert map
(see \eqref{Thg} for the definition), and $\omega_{FS}$ denotes the Fubini--Study metric on $\mathbb{P}^{2^{g-1}-1}$.
\end{theorem}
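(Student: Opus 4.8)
The plan is to compute the $\debar$--derivative of $\beta^P$ by reducing the problem, via the general affine-torsor formalism recalled in the introduction, to an infinitesimal computation on the moduli space $R_g$. Since $\beta^P$ is a $C^\infty$ section of the $\Omega^1_{R_g}$--torsor $\xi^\ast V_g$, its $\debar$--derivative is a $(1,1)$--form whose value on a pair of tangent vectors at a point $[\pi:\widetilde C\to C]$ measures the antiholomorphic variation of the normalized bidifferential $\phi^\ast s$ attached to the distinguished line $\mathscr L\subset H^0(P,2\Xi)$. Concretely, I would fix a section $s\in\mathscr L$, pull it back by the Prym difference map $\phi$ to obtain an element of $H^0(K_{\widetilde C\times\widetilde C}(2\widetilde\Delta-2\Sigma))$, and track how the jet of $\phi^\ast s$ along the diagonal varies as the Prym data vary. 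The key technical input making this feasible is Lemma \ref{lemma:pull}, which expresses the restriction to the third-order diagonal neighborhood $3\Deltas$ of the pullback of a section of $2\Xi$ purely in terms of the value and the second derivatives of that section at the origin of $P$. This replaces the analytically intractable pullback map by a finite-dimensional datum: the $2$--jet of $s$ at $0\in P$.

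Next I would identify this $2$--jet datum with the geometry of the Thetanullwert map. The distinguished line $\mathscr L$ is, by construction, the orthogonal complement of the hyperplane of sections vanishing at the origin; a generator of $\mathscr L$ is therefore essentially the unique (up to scale) section whose value at $0$ is maximal relative to the intrinsic Hermitian norm, and its normalized derivative along $R_g$ records precisely how the point $\Theta_{g-1}(P)\in\mathbb{P}^{2^{g-1}-1}$ moves. The space $H^0(P,2\Xi)$ has dimension $2^{g-1}$, and the second-order theta functions furnish the coordinates of the Thetanullwert embedding; I expect the variation of the projective structure to be governed by the differential of $\Theta_{g-1}\circ P_g$. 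The plan is then to match the $\debar$--derivative, term by term, against the pullback under $P_g^\ast\Theta_{g-1}^\ast$ of the Fubini--Study form $\omega_{FS}$, using that $\omega_{FS}$ is exactly the curvature of the tautological Hermitian metric on $\mathcal O(1)$ over $\mathbb{P}^{2^{g-1}-1}$, which corresponds to the intrinsic Hermitian structure on $H^0(P,2\Xi)$ up to a positive constant.

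The main obstacle I anticipate is pinning down the universal constant $8\pi$. The qualitative shape of the result --- that $\debar\beta^P$ is a positive multiple of $P_g^\ast\Theta_{g-1}^\ast\omega_{FS}$ --- should follow once the $\debar$--derivative is expressed through the $2$--jet of the generator of $\mathscr L$ and the latter is recognized as a Fubini--Study pullback. But extracting the precise scalar requires carefully reconciling three normalizations: the normalization of $\phi^\ast s$ that turns it into a genuine projective structure (subtracting the principal part along $\widetilde\Delta$), the normalization of the Hermitian inner product on $H^0(P,2\Xi)$ fixing $\mathscr L$, and the factor introduced by the isomorphism \eqref{phistar} together with the degree-two behavior of $\pi$ under descent to $C$. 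I would compute the constant by testing against an explicit local model --- differentiating the second-order theta functions and their norms and comparing with the classical heat equation satisfied by theta functions --- which is where the bookkeeping of factors of $\pi$, of $2$, and of the covering degree must be done with care. A secondary subtlety is checking $\sigma$--invariance and descent at the level of the derivative, so that the $(1,1)$--form computed on $\widetilde C$ genuinely descends to the stated expression on $R_g$; this I expect to handle using the already-established Corollary \ref{cor:descent} and the equivariance of the construction.
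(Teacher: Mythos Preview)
Your plan is essentially the paper's own: use Lemma~\ref{lemma:pull} to reduce $\beta^P$ to the $2$--jet of a generator of $\mathscr L$ at $0\in P$, write that generator explicitly in the orthogonal theta basis as $s_b(t)=\sum_u\overline{\theta_u(\tau(b);0)}\,\theta_u(\tau(b);t)$, and convert the second $t$--derivatives into first $Z$--derivatives via the heat equation, whereupon the Fubini--Study potential $\log\sum_u|\theta_u(\tau;0)|^2$ appears and the constant $8\pi$ falls out. One point you leave implicit but the paper makes explicit (and which you will need): the computation is carried out on the Torelli space $\widetilde M_g$ rather than $R_g$, and the bridge between the quadratic differentials $\gamma_i\gamma_j$ produced by Lemma~\ref{lemma:pull} and the cotangent directions $\tau^\ast dZ_{ij}$ on moduli is the codifferential of the Prym period map (Proposition~\ref{prop:codifferential}), which gives $\tau^\ast dZ_{ij}=\lambda(-\gamma_i\gamma_j)$; without this identification the $\debar$ of your expression does not land in $\Omega^{1,1}$.
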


As an application of Theorem \ref{thmi}, we show that in general the answer to Question \ref{question:one} is negative.
Equivalently, the section $\beta^P$ of $\xi^\ast V_g$ is not the pullback of a section of $V_g$.
We prove this by showing that the form $\debar \beta^P$ does not descend to $M_g$, in other words, it is not the pullback of a form on $M_g$. 
In particular 
$\beta^P$ is a new object living on $R_g$ not on $M_g$ and thus it differs from the other previously known canonical projective structures.

Our proof  uses the
classical Schottky--Jung identities, as they are presented in \cite{FR}.
In fact, Theorem \ref{teo:debar} expresses $\debar \beta^P$ at a point $[\widetilde C\to C]$ in terms of the period matrix $\tau$ of the Prym variety $P(\widetilde C\to C)$.
To understand whether $\debar\beta^P$ descends to $M_g$ or not, it is necessary to express it in terms of the period matrix $\Pi$ of the  curve $C$.
The Schottky--Jung identities give a relation between $\tau $ and $\Pi$, that allows to do this computation. 
In this regard, the Torelli space $\widetilde M_g$ (see \S \, \ref {subs:family1})
plays a fundamental role: being a common cover of $R_g$ and $M_g$, it serves as a connection between these two moduli spaces. 
The final result
(see Corollary \ref{cor:equivalentdebar}) is the following expression on $\widetilde M_g$:
\begin{gather*}
\debar \beta^P\ =\ 8\pi \ \Pi^\ast (\Theta_g')^\ast\omega_{FS},
\end{gather*}
where $\Pi\,:\,\widetilde{M}_g\,\longrightarrow\, \mathbb{H}_g$ is the period map, $\Theta_g'\,: \,\mathbb{H}_g\,\longrightarrow\, \mathbb{P}^N$ is the Thetanullwert map defined in \eqref{b3} and $\omega_{FS}$ is the Fubini--Study metric on $\mathbb{P}^N$.
We give a quick sketch of the proof of the fact that $\debar \beta^P$ does not descend to $M_g$.
Fix 
a curve of genus $g-1$ with  simple Jacobian and let
 $\Pi'$ be a period matrix of $C'$. Consider 
$V_1\, =\, \mathbb{H}_1\times \{\Pi'\}$ and $V_2\,=\,\{\Pi'\}\times \mathbb{H}_1$.
These are submanifolds  of $\mathbb{H}_g$.
We show that the restriction of $(\Theta_g')^\ast \omega_{FS}$ to $V_1$ is zero, while the restriction to $V_2$ is non zero. 
The result follows because $V_1$ and $V_2$ are in the closure of the Torelli locus $\Pi(\widetilde M_g)\,\subset\, \mathbb{H}_g$. 
To make this into a proof, some technical points are needed, which are settled in Lemma \ref{lemma:limitsequence}.

Question \ref{question:two} remains open. 
This questions seems quite different from Question \ref{question:one}. It is not clear whether one can answer it by computing the $\debar$-derivative and using a degeneration argument. 
We leave this problem for future investigations.

Finally we would like to stress that  the construction of $\beta^P$ and $\beta^Q$ fits extremely well with the setting of moduli of curves, Jacobians and Prym varieties. 
It seems a very natural object relating to several classical and fundamental objects.

The paper is organized as follows.
In Section 2 we recall the parts of the theory of Prym varieties and of theory of projective structures that are necessary for this paper.
Next starting from the étale double cover  \eqref{pcc} we
construct the Prym projective structures $\beta^Q_\pi$ on $\widetilde C$ and $\beta^P_\pi$ on $C$.
In Section 3 we consider the Torelli space $\widetilde M_g$.
We study the differential of the Prym--period map $\widetilde M_g\longrightarrow \mathbb{H}_{g-1}$. This is a required step in the computation of  $\debar \beta^P$.
Then, we explain how one can define smooth families of projective structures on $\widetilde M_g$.
In Section 4 we compute $\debar \beta^P$ explicitly on $R_g$ and on its cover $\widetilde M_g$, in terms of the period matrix of the Prym variety $P(\widetilde C\longrightarrow C)$.
In Section 5 we recall the  Schottky-Jung identities and use them to deduce the expression of $\debar \beta^P$ in terms of the period matrix of the bottom  $C$.
This requires the machinery  of theta functions.
Finally we prove that $\debar \beta^P$ does not descend to $M_g$ and that $\beta^P$ is not the pullback of a canonical projective structure on $M_g$. 

\section*{Acknowledgements}

The authors are deeply grateful to Juan Carlos Naranjo, who generously shared with them his knowledge of Prym varieties.
Without his  help it would have been impossible to start this project. 
They would also like to thank Riccardo Salvati Manni for helping   with his insights on theta functions and Gian Pietro Pirola for interesting discussions. 
The first author is partially supported by a J. C. Bose Fellowship (JBR/2023/000003).
The second and third authors were partially supported by INdAM-GNSAGA, by MIUR PRIN 2022:
20228JRCYB, ``Moduli spaces and special varieties'' and by FAR 2016
(Pavia) ``Variet\`a algebriche, calcolo algebrico, grafi orientati e
topologici''.

\section{The Prym projective structures}

This Section starts by
 recalling the definition and several useful facts about projective structures on Riemann surfaces.
In \ref{subs:prym} we recall some definitions and basic facts on Prym varieties.  
Then we construct the Prym projective structure on $\widetilde C$.
In \ref{proonbase}
we show that this projective structure is induced by a projective structure on $C$.

\subsection{Preliminaries on projective structures}
\label{subs:projectivestructures}

Throughout $C$ will denote a compact Riemann
surface of genus $g$, with $g\,\geq \,2$. The canonical line bundle of $C$ will be denoted by $K_C$; the same notation $K_M$ will be used for the canonical bundle of any smooth projective variety $M$. Set
\begin{equation*}
 S\ :=\ C\times C.
\end{equation*}
For $i\,=\,1,\, 2$, let
\begin{equation}\label{epi0}
p_i\ :\ S\ \longrightarrow\ C
\end{equation}
the natural projection to the $i$--th factor, let 
\begin{gather*}
 \Delta \ :=\ \{(x,\, x)\,\,\big\vert\,\, x\, \in\, C\}
\, \subset\, S
\end{gather*}
be the reduced diagonal, and let
\begin{equation}\label{et}
\textbf{t} \ :\ S \ \longrightarrow\ S, \ \ \, (x,\, y)\ \longrightarrow\
(y,\, x)
\end{equation}
be the involution of $S$.

We recall the definition of a projective structure:

\begin{definition}
A holomorphic atlas $\{(U_i,\, z_i)\}_{i\in I}$ of $C$ is called {\it projective} if all its transition
functions are M\"obius transformations, i.e., they satisfy the condition
\begin{equation}
\label{eq:projatlas}
z_i\ =\ \frac{a_{ij}z_j+b_{ij}}{c_{ij}z_j+d_{ij}}
\end{equation}
for all $i,\, j\, \in\, I$ with $U_i\cap U_j\,\not=\, \emptyset$, where
$a_{ij}$,\, $b_{ij}$,\, $c_{ij}$ and $d_{ij}$ are locally constant functions on
$U_i\cap U_j$. Two projective atlases are considered equivalent if their union is also a projective atlas. A
{\it projective structure} on $C$ is an equivalence class of projective atlases. Since each equivalence class contains a unique maximal atlas, a projective structure can also be defined as a maximal projective atlas.
\end{definition}

The space $\mathcal{P}(C)$ of all projective structures on $C$ is an affine space modelled on 
the complex vector space $H^0(C,\, 2K_C)$. This affine structure is presented in \cite[Section 2]{Tyu}; see also \cite{GU}. We will recall this construction in the proof of Proposition \ref{prop:pullbackaffine}. Here, we show how to describe algebraically projective 
structures. 
The line bundle 
\begin{gather}
\label{defbb}
\bb\ :=\ K_S(2\Delta)\ =\ K_S\otimes_{{\mathcal O}_S}\mathcal{O}_S(2\Delta) \ \lra\ S 
\end{gather}
will play a crucial role.
The restriction
$\bb\big\vert_{\Delta}$ is canonically trivial, and there is a unique section of $\bb\big\vert_{2\Delta}$ that is anti-invariant under $\textbf{t}$ (see
\eqref{et}) and whose restriction to the diagonal is $1\,\in\, H^0(\bb\big\vert_{\Delta})\,=\,\mathbb{C}$. This canonical section will be denoted by
\begin{equation}
\label{a1}
s_0\ \in\ H^0(2\Delta,\, \bb\big\vert_{2\Delta}).
\end{equation}
We note that $s_0$ is uniquely characterized by the following two properties: $s_0\big\vert_\Delta\,=\, 1$ and $s_0$ extends to a holomorphic section of $\bb$
\cite[p.~756, Proposition 2.10]{BR1}.

Consider the short exact sequence of sheaves on $3\Delta$
$$
0\,\longrightarrow\, K_S\big\vert_\Delta
\,\longrightarrow\, \bb\big\vert_{3\Delta}\,
\longrightarrow\, \bb\big\vert_{2\Delta}\,\longrightarrow\, 0,
$$
and the associated long exact sequence of cohomologies
$$0\,\longrightarrow\, H^0(C,\, 2K_C)\,=\, H^0(C,\, K_S\big\vert_\Delta)
\,\longrightarrow\, H^0(3\Delta,\, \bb\big\vert_{3\Delta})
\xrightarrow{\Psi}\, H^0(2\Delta,\, \bb\big\vert_{2\Delta})\,\longrightarrow\, 0;$$
note that $H^1(C,\, 2K_C)\,=\,0$ because $\text{genus}(C)\, \geq\, 2$.

There is the following canonical isomorphism between the affine spaces $\mathcal{P}(C)$ and
\begin{equation}\label{evc}
\mathcal{V}_C\ \, :=\ \, \Psi^{-1}(s_0)
\end{equation}
\cite[Section 3]{BR1}: given a projective atlas $\mathcal{Z}\,=\,\{(z_i,\,U_i)\}_{i\in I}$ we may construct a holomorphic section
\begin{equation*}
s_{\mathcal{Z}}\,\in\, H^0\left(\bigcup_{i\in I} U_i\times U_i, K_{C\times C}(2\Delta)\right),
\ \ \, s_{\mathcal{Z}}\big\vert_{U_i\times U_i}\,:=\,\frac{d p_1^\ast z_i\wedge dp_2^\ast z_i}{(p_1^\ast z_i-p_2^\ast z_i)^2}
\end{equation*}
(see \eqref{epi0} for $p_1,\, p_2$),
which is well defined, i.e., its pieces "patch together" compatibly, because the transition maps $z_i\,\longrightarrow\, z_j$ are of the form \eqref{eq:projatlas}.
The map 
\begin{equation}
\label{eq:Fmap}
F\,:\, \mathcal{P}(C) \,\longrightarrow\,\mathcal{V}_C, \ \ \, \mathcal{Z}\,\longmapsto \, (s_{\mathcal{Z}})\big\vert_{3\Delta}
\end{equation}
is a bijection between the projective structures $\mathcal{P}(C)$ and $\mathcal{V}_C$ in \eqref{evc} \cite[Theorem 3.2]{BR1}.

\begin{lemma}
\label{Fequivanriant}
The holomorphic automorphism group $\Aut( C)$ of $C$ acts on the left of both $\mathcal P(C)$ and
$\mathcal V_C$. The map $F$ constructed in \eqref{eq:Fmap} is equivariant for these actions. 
\end{lemma}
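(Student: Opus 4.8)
\emph{Proof proposal.}
The plan is to write down the two actions explicitly, check that the one on $\mathcal V_C$ is well defined (this is the only delicate point), and then verify the equivariance of $F$ by a short local computation. For the action on $\mathcal P(C)$ I would push forward atlases: for $f\,\in\,\Aut(C)$ and a projective atlas $\mathcal Z\,=\,\{(U_i,\,z_i)\}_{i\in I}$, set $f\cdot\mathcal Z\,:=\,\{(f(U_i),\, z_i\circ f^{-1})\}_{i\in I}$. Writing $w_i\,=\,z_i\circ f^{-1}$, the transition relation \eqref{eq:projatlas} for $\mathcal Z$ gives at once the same relation for the $w_i$, so $f\cdot\mathcal Z$ is again projective, and a direct check shows $(fg)\cdot\mathcal Z\,=\,f\cdot(g\cdot\mathcal Z)$, so this is a left action. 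On the cohomology side, the biholomorphism $f\times f$ of $S$ preserves $\Delta$ together with each thickening $n\Delta$ and lifts canonically to $\bb\,=\,K_S(2\Delta)$; the induced pushforward $(f\times f)_\ast$ then defines a left action on $H^0(3\Delta,\,\bb\vert_{3\Delta})$ and on $H^0(2\Delta,\,\bb\vert_{2\Delta})$.

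Next I would check that this action preserves $\mathcal V_C\,=\,\Psi^{-1}(s_0)$. Since $\Psi$ is the natural restriction arising from the sheaf sequence on $3\Delta$, it commutes with $(f\times f)_\ast$; hence it is enough to prove that $s_0$ is fixed. For this I would use the uniqueness characterization of $s_0$: it is the unique section of $\bb\vert_{2\Delta}$ that is anti-invariant under $\textbf{t}$ and restricts to $1\,\in\,H^0(\bb\vert_\Delta)\,=\,\mathbb C$. Because $(f\times f)\circ\textbf{t}\,=\,\textbf{t}\circ(f\times f)$, the section $f\cdot s_0$ is again anti-invariant under $\textbf{t}$; and because $f\times f$ respects the canonical trivialization $\bb\vert_\Delta\,\cong\,\OO_C$ --- which follows from the natural identifications $K_S\vert_\Delta\,\cong\,2K_C$ and $\OO_S(2\Delta)\vert_\Delta\,\cong\,-2K_C$ --- it fixes the constant section $1$, so $(f\cdot s_0)\vert_\Delta\,=\,1$. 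Uniqueness then forces $f\cdot s_0\,=\,s_0$, and therefore $f\cdot\mathcal V_C\,=\,\mathcal V_C$.

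Finally, for the equivariance of $F$ I would establish the identity $s_{f\cdot\mathcal Z}\,=\,f\cdot s_{\mathcal Z}$ already at the level of sections on neighborhoods of $\Delta$, before restricting to $3\Delta$. From $p_j\circ(f\times f)\,=\,f\circ p_j$ one gets $(f\times f)^\ast(p_j^\ast w_i)\,=\,p_j^\ast z_i$, so substituting into the defining formula $s_{f\cdot\mathcal Z}\vert_{f(U_i)\times f(U_i)}\,=\,\frac{dp_1^\ast w_i\wedge dp_2^\ast w_i}{(p_1^\ast w_i-p_2^\ast w_i)^2}$ and pulling back by $f\times f$ reproduces exactly $s_{\mathcal Z}\vert_{U_i\times U_i}$. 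This gives $(f\times f)^\ast s_{f\cdot\mathcal Z}\,=\,s_{\mathcal Z}$, i.e. $s_{f\cdot\mathcal Z}\,=\,f\cdot s_{\mathcal Z}$; restricting to $3\Delta$ yields $F(f\cdot\mathcal Z)\,=\,f\cdot F(\mathcal Z)$ by \eqref{eq:Fmap}.

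The only step that is more than formal is the well-definedness on $\mathcal V_C$: everything rests on the invariance of $s_0$, so the point to get right is the behaviour of $f\times f$ with respect to the canonical trivialization of $\bb\vert_\Delta$ and its commutation with the swap involution $\textbf{t}$. Once $s_0$ is known to be fixed, both the preservation of $\mathcal V_C$ and the equivariance of $F$ drop out of the naturality of all the constructions involved.
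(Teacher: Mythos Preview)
Your proof is correct and follows essentially the same approach as the paper: define the action on $\mathcal P(C)$ by pushforward of atlases, define the action on $\mathcal V_C$ via the diagonal action of $\Aut(C)$ on $S$ and its natural lift to $\bb$, observe that $s_0$ is fixed, and check equivariance of $F$. The paper's proof is much terser --- it simply asserts that the action ``obviously fixes'' $s_0$ and that equivariance is ``easy to check'' --- whereas you actually carry out these verifications using the uniqueness characterization of $s_0$ and a direct local computation; this added detail is welcome but does not change the strategy.
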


\begin{proof} 
Any $f\, \in\, \Aut( C)$ acts on $\mathcal P(C)$ by sending any
$\mathcal Z\,=\,\{(U_i,\, z_i)\}_{i \in I} \,\in\, \mathcal P(C)$ to
\begin{gather*}
f\cdot \mathcal Z\ =\ \{ (f (U_i), \,z_i \circ f^{-1} )\}_{i \in I}.
\end{gather*}

On the other hand, the diagonal action of $\Aut (C)$ on $S\,=\, C\times C$ obviously preserves $\Delta$. From this we get an action of $\Aut (C)$ on $\bb$ and on $ \bb \big\vert_{k\Delta}$ for every $k\,\geq\, 0$: if $(U,\,z)$ is a coordinate function on $C$, then
for the coordinate function $(z,\,z)$ on $U\times U$ on $S$ we have
\begin{gather*}
 f \cdot \frac{d p_1^\ast z\wedge dp_2^\ast z}{(p_1^\ast z-p_2^\ast z)^k}\ =\
 \frac{d p_1^\ast (f^{-1})^* z\wedge dp_2^\ast( f^{-1})^* z}{(p_1^\ast (f^{-1})^* z-p_2^\ast( f^{-1})^* z)^k}.
\end{gather*}
The action of $f$ on $H^0(2\Delta,\, \bb\big \vert_{2\Delta})$ obviously fixes the section $s_0$ in \eqref{a1}, and hence we get an action of $f$ on $\mathcal V_C$. It is easy to check that the map $F$ in \eqref{eq:Fmap} is equivariant.
\end{proof}

Let $(C,\,\mathcal Z)$ and $(C',\, \mathcal Z')$ be Riemann surfaces with projective structure. A local diffeomorphism $ f\,:\, C \,\longrightarrow\, C' $ is said to be a projective map (with respect to $\mathcal Z$ and $\mathcal Z'$) if given any charts $(U,\,z) \,\in\, \mathcal Z$ and $(V,\, w) \,\in\, \mathcal Z'$ such that $f(U) \,\subset\, V$, the composition $w \circ f \circ z^{-1}$ is the restriction of a M\"obius transformation. Note that this condition implies that
$f$ is holomorphic. Define
\begin{gather*}
\Aut (C, \mathcal Z)\ := \ \{f \,\in\, \Aut (C)\,\,\big\vert\,\,
f\cdot {\mathcal Z}\,=\, {\mathcal Z}\}.
\end{gather*}
In other words, $\Aut(C,\mathcal Z)$ is the stabilizer of $\mathcal Z$ for the action of $\Aut (C) $ on $\mathcal P(C)$. For a subgroup $G \,\subset\, \Aut(C)$ set
\begin{equation}
\label{projCG}
\mathcal P(C)^G\ :=\ \{ \mathcal Z\,\in\, \mathcal P(C)\,\,\big\vert\,\, G \,\subset\, \Aut (C,\mathcal Z)\}.
\end{equation}

The following proposition shows that projective structures behave well with respect to \'etale coverings, just like complex or Riemannian structures.

\begin{proposition}\label{prop:pullbackaffine}
Let $\pi\,:\, \widetilde{C}\, \longrightarrow \,C $ be a finite \'etale
covering map. Then the following four statements hold.
\begin{enumerate}
\item 
 For any $\mathcal Z \,\in\, \mathcal P(C)$, there is a unique projective structure $\pi^* \mathcal Z \,\in\, \mathcal P(\widetilde C)$ such that $\pi\,:\, (\widetilde C,\, \pi^*\mathcal Z)\, \longrightarrow \,(C,\,\mathcal Z)$ is a projective map.

\item The projective structure $\pi^ *\mathcal Z$ is characterized by the following
property: if $U\,\subset\, C$ is evenly covered, $V$ is a connected component of $\pi^{-1}(U)$ and $z \,:\, U \,\longrightarrow\, \mathbb{C}$ is a coordinate function
in ${\mathcal Z}$, then $z \circ \pi\big\vert_{V} \,:\, V \,\longrightarrow\,\mathbb{C}$ is a coordinate function belonging to $\pi^*\mathcal Z$.
 
 \item The pullback is contravariant, in other words, if $f\,:\,X\,\longrightarrow\, Y$ and $h\,:\,Y\,
 \longrightarrow\,Z$ are finite \'etale covers, then 
$$(h\circ f)^\ast\,=\,f^\ast\circ h^\ast\,:\,\mathcal{P}(Z)\,\longrightarrow\, \mathcal{P}(X).$$

\item The map
 \begin{gather}
 \label{pullback-map}
\pi^*\ :\ \mathcal P(C)\ \lra\ \mathcal P(\widetilde C)
 \end{gather}
 is an injective affine morphism, and its derivative is the pull-back map $$\pi^*\,:\, H^0(C,\,2K_C)\,\longrightarrow\, H^0(\widetilde{C},\, 2K_{\widetilde C}).$$
\end{enumerate}
\end{proposition}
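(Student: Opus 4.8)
The plan is to prove the four statements essentially in order, building each on the previous one, and to construct the pullback projective structure explicitly via the local coordinate description in (2) rather than abstractly. First I would establish (1) and (2) together. Since $\pi$ is a finite étale covering, every point of $C$ has an evenly covered neighborhood $U$, and on each connected component $V$ of $\pi^{-1}(U)$ the restriction $\pi\big\vert_V\,:\,V\,\longrightarrow\,U$ is a biholomorphism. Given a projective atlas $\mathcal Z\,=\,\{(U_i,\,z_i)\}_{i\in I}$ for $C$, I would refine it so that every $U_i$ is evenly covered, and then define charts on $\widetilde C$ by taking, for each connected component $V$ of $\pi^{-1}(U_i)$, the coordinate $z_i\circ\pi\big\vert_V$. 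The key point is that these charts form a \emph{projective} atlas on $\widetilde C$: if two such charts overlap, their transition function is the transition function of $\mathcal Z$ restricted along the local biholomorphisms given by $\pi$, hence again a Möbius transformation. This produces a projective structure $\pi^*\mathcal Z$ with the property stated in (2), and by construction $\pi$ is a projective map with respect to $\mathcal Z$ and $\pi^*\mathcal Z$. Uniqueness in (1) follows because the charts described in (2) are forced: if $\pi\,:\,(\widetilde C,\,\mathcal W)\,\longrightarrow\,(C,\,\mathcal Z)$ is projective, then for any chart $(U,\,z)\in\mathcal Z$ and component $V$ of $\pi^{-1}(U)$, the composition $z\circ\pi\big\vert_V$ must be Möbius-compatible with $\mathcal W$, which pins down $\mathcal W$ on $V$ as a maximal projective atlas.

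Statement (3) is then formal. Given composable finite étale covers $f\,:\,X\,\longrightarrow\,Y$ and $h\,:\,Y\,\longrightarrow\,Z$ and a chart $(U,\,z)\in\mathcal Z'\in\mathcal P(Z)$, I would chase the characterization in (2) through both constructions: a component $W$ of $(h\circ f)^{-1}(U)$ maps under $f$ into a component $V$ of $h^{-1}(U)$, and $z\circ h\big\vert_V$ is a chart of $h^*\mathcal Z'$, whose further pullback $z\circ h\circ f\big\vert_W\,=\,z\circ(h\circ f)\big\vert_W$ is a chart of both $f^*(h^*\mathcal Z')$ and $(h\circ f)^*\mathcal Z'$. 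Since both projective structures contain the same distinguished charts, they coincide.

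For (4), injectivity follows from (3) applied to a splitting, or more directly: if $\pi^*\mathcal Z\,=\,\pi^*\mathcal Z'$, then on every evenly covered $U$ the charts $z\circ\pi\big\vert_V$ and $z'\circ\pi\big\vert_V$ are Möbius-compatible, and since $\pi\big\vert_V$ is a biholomorphism, $z$ and $z'$ are Möbius-compatible on $U$, forcing $\mathcal Z\,=\,\mathcal Z'$. The main substantive point is the affine-linearity claim and the identification of the derivative. Here I would use the description of projective structures via the bijection $F\,:\,\mathcal P(C)\,\longrightarrow\,\mathcal V_C$ from \eqref{eq:Fmap}: recall that the affine structure on $\mathcal P(C)$ modelled on $H^0(C,\,2K_C)$ is exactly the one transported from the affine space $\mathcal V_C\,=\,\Psi^{-1}(s_0)$. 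For a chart $(U,\,z)$ and a component $V$, the local generating section $s_{\mathcal Z}$ pulls back under the self-product $\pi\times\pi\,:\,\widetilde C\times\widetilde C\,\longrightarrow\,C\times C$ to
\begin{equation*}
(\pi\times\pi)^*\,\frac{dp_1^*z\wedge dp_2^*z}{(p_1^*z-p_2^*z)^2}\ =\ \frac{dp_1^*(z\circ\pi)\wedge dp_2^*(z\circ\pi)}{(p_1^*(z\circ\pi)-p_2^*(z\circ\pi))^2},
\end{equation*}
which is precisely the local section $s_{\pi^*\mathcal Z}$ defining $\pi^*\mathcal Z$. Thus the map $F$ intertwines $\pi^*\,:\,\mathcal P(C)\,\longrightarrow\,\mathcal P(\widetilde C)$ with the restriction of $(\pi\times\pi)^*$ on symmetric bidifferentials. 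Since $(\pi\times\pi)^*$ is $\mathbb C$--linear on sections and carries $s_0$ on $C$ to $s_0$ on $\widetilde C$ (by uniqueness of the anti-invariant section restricting to $1$ on the diagonal, using that $\pi\times\pi$ commutes with $\mathbf t$ and is étale near the diagonal), it maps $\mathcal V_C$ affinely into $\mathcal V_{\widetilde C}$, and its linear part is the pullback $(\pi\times\pi)^*$ on $H^0(C,\,K_S\big\vert_\Delta)\,=\,H^0(C,\,2K_C)$, which is canonically the map $\pi^*\,:\,H^0(C,\,2K_C)\,\longrightarrow\,H^0(\widetilde C,\,2K_{\widetilde C})$. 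The hard part is bookkeeping the identifications so that ``the linear part of the affine map $\pi^*$ on $\mathcal P$'' is correctly matched with ``$\pi^*$ on $H^0(2K)$'' under the affine isomorphism $F$; once the compatibility $(\pi\times\pi)^*s_0^C\,=\,s_0^{\widetilde C}$ is checked, the rest is naturality.
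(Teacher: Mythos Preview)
Your argument for (1)--(3) and for injectivity in (4) is essentially the paper's argument, only spelled out in more detail. The substantive difference is in the second half of (4), the identity $\pi^\ast(\mathcal Z+\gamma)\,=\,\pi^\ast\mathcal Z+\pi^\ast\gamma$. The paper proves this via the Schwarzian-derivative description of the affine structure: writing $\gamma$ locally as $\gamma_i(z_i)\,dz_i^2$, solving $\vartheta_{z_i}h_i\,=\,\gamma_i$, noting that $\mathcal Z+\gamma$ is the atlas $\{(h_i\circ z_i,\,U_i)\}$, and then simply observing that the same $h_i$ solve the Schwarzian equation in the pulled-back coordinate $z_i\circ\pi$. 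You instead work through the bijection $F\,:\,\mathcal P\,\to\,\mathcal V$, arguing that $(\pi\times\pi)^\ast$ carries $s_{\mathcal Z}$ to $s_{\pi^\ast\mathcal Z}$ and $s_0^C$ to $s_0^{\widetilde C}$, hence induces an affine map $\mathcal V_C\,\to\,\mathcal V_{\widetilde C}$ whose linear part is visibly $\pi^\ast$ on $H^0(2K)$. Your route stays entirely inside the algebraic framework the paper sets up and avoids introducing the Schwarzian, at the cost of one extra check (that $(\pi\times\pi)^{-1}(\Delta)$ has components other than $\widetilde\Delta$, but these are disjoint from $\widetilde\Delta$, so the pullback is well behaved on the infinitesimal neighborhoods $k\widetilde\Delta$); the paper's route is shorter and entirely self-contained in coordinates. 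Both are correct. One small remark: your aside that ``injectivity follows from (3) applied to a splitting'' does not work for a general finite \'etale cover, which has no section; rely on your direct argument, which is the paper's.
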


\begin{proof}
Clearly, the property in (1) is satisfied by at most one projective structure $\pi^*\mathcal Z$.
It is easy to verify that (2) actually defines a projective structure $\pi^*\mathcal Z$ on $\widetilde C$ that satisfies property (1).

(3) is obvious. It is also quite clear that the map in \eqref{pullback-map} is injective: since $\pi$ has to be a projective map, the projective structure on $\widetilde C$ determines that on $C$. 
 It remains to prove the second part of (4) which says that
\begin{equation}
\label{eq:affinepullbackthesis}
\pi^\ast(\mathcal{Z}+\gamma)\ =\ \pi^\ast\mathcal{Z}+\pi^\ast \gamma
\end{equation}
for every $\gamma \,\in\, H^0(C,\, 2K_C)$.

Recall the affine space structure of the space of projective structures on a given Riemann
surface (see \cite[p. 169ff]{GuLec} and the proof of Lemma 3.6 in \cite{BR1} for more details).
Given a projective atlas $\mathcal{Z}\,=\, \{(z_i,\,U_i)\}_{i\in I}$ on $C$, let $\gamma\, =\, \gamma_i(z_i) dz_i^2$ on $U_i$. Taking a refinement of the cover, if necessary, we can find solutions $h_i$ for the differential equations
$$\vartheta_{z_i} h_i(z_i)\ =\ \gamma_i(z_i),$$
where $\vartheta$ is the Schwarzian derivative.
Then $\mathcal{Z}+\gamma$ is the projective structure corresponding to the atlas
$$\mathcal{Z}+\gamma\ :=\ \{(h_i\circ z_i,\, U_i)\}_{i\in I}.$$
Assume that the $U_i$ are evenly covered by $\pi$ and $\pi^{-1}(U_i)\,=\,\sqcup_{k=1}^d V_{i,k}$. Then
\begin{equation}
\label{eq:affinepullback}
\pi^\ast \mathcal{Z}\,=\,(z_i\circ \pi,\, V_{i,k})_{i\in I, k=1,\cdots ,d},\ \ \, \pi^\ast (\mathcal{Z}+\gamma)\,=\,(h_i\circ z_i\circ \pi,\, V_{i,k})_{i\in I, k=1,\cdots ,d},
\end{equation}
and $\pi^\ast \gamma$ is locally $\gamma_i(\pi(z_i)) d (z_i\circ \pi)^2$ on $V_{i,k}$. 
Now, the equality $\vartheta_{z_i} h_i(z_i)\,=\,\gamma_i(z_i)$ implies that $\vartheta_{z_i\circ \pi} h_i(\pi(z_i))\,=\,\gamma_i(\pi(z_i))$. Consequently, for the affine space structure
of $\mathcal{P}(\widetilde C)$ modelled on $H^0(\widetilde{C},\, 2K_{\widetilde C})$, we have
$$\pi^\ast\mathcal{Z}+\gamma\,=\, (h_i\circ z_i\circ \pi,\, V_{i,k})_{i\in I,k=1,\cdots ,d}.$$
Comparison of this with \eqref{eq:affinepullback} proves \eqref{eq:affinepullbackthesis}.
\end{proof}

\begin{definition}
\label{definduced}
If $\widetilde{\mathcal Z} \,\in\,\mathcal{P}(\widetilde C)$ satisfies the
condition $\widetilde{\mathcal Z}\,=\,\pi^\ast\mathcal Z$, we will say that $\widetilde{\mathcal Z}$ is induced by $\mathcal Z$.
\end{definition}
By Proposition \ref{prop:pullbackaffine}(4),
if $\widetilde{\mathcal Z}$ is induced by a projective structure on $C$, there is exactly
one projective structure on $C$ that induces $\widetilde{\mathcal Z}$.

\begin{proposition}\label{prop:pullbackaffine2}
Let $\pi\,:\, \widetilde{C}\, \longrightarrow\, C$ be an \'etale Galois covering
whose Galois group is $G$. 
\begin{enumerate}
\item 
Fix $\widetilde{\mathcal Z} \,\in\, \mathcal P(\widetilde C)^G$ (defined
in \eqref{projCG}). 
Consider the atlas of $C$ formed by the charts 
\begin{equation*}
z \circ (\pi\big\vert_{V})^{-1} \,:\, U \,\,\longrightarrow\, \mathbb{C}
\end{equation*}
where $U$ is an evenly covered open subset of $C$ with $V$ being a connected component of $\pi^{-1}(U)$ and $z \,:\, V \,\longrightarrow\, \mathbb{C}$ is a chart in $\widetilde{\mathcal Z}$. Then this is a projective atlas on $C$. 
\item 
Denote by $\mathcal Z$ the projective structure on $C$ defined by the atlas described in (1). 
Then ${\mathcal Z} \,\in\, \mathcal P( C)$ is the unique projective structure such that 
$\pi^*\mathcal Z \,=\, \widetilde{\mathcal Z}$. In other words,
$\mathcal Z $ is the unique projective structure such that
$\pi\,:\, (\widetilde{C},\, \widetilde{\mathcal Z})\,\longrightarrow\, (C,\,\mathcal Z)$
is projective. 
\item $\pi^\ast \mathcal{P}(C)\,=\, \mathcal P (\widetilde{C})^G $
(see Proposition \ref{prop:pullbackaffine}(1) for notation).
\end{enumerate}
\end{proposition}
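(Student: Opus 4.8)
The plan is to regard part (1) as the crux and to deduce parts (2) and (3) from it together with Proposition \ref{prop:pullbackaffine}. The geometric content of (1) is that the $G$--invariance of $\widetilde{\mathcal Z}$ is exactly what is needed for the locally defined charts on $C$ to patch into a \emph{projective} atlas, and the technical heart is a transition-function computation.

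For part (1), first I would take two charts of the proposed atlas, say $w\,=\,z\circ(\pi\big\vert_V)^{-1}\,:\,U\,\longrightarrow\,\mathbb C$ and $w'\,=\,z'\circ(\pi\big\vert_{V'})^{-1}\,:\,U'\,\longrightarrow\,\mathbb C$, built from charts $(V,\,z),\,(V',\,z')\,\in\,\widetilde{\mathcal Z}$, and compute the transition function on a connected component $W$ of $U\cap U'$:
$$w'\circ w^{-1}\ =\ z'\circ(\pi\big\vert_{V'})^{-1}\circ(\pi\big\vert_V)\circ z^{-1}.$$
The key observation is that on the connected set $(\pi\big\vert_V)^{-1}(W)$ the sheet-change map $(\pi\big\vert_{V'})^{-1}\circ(\pi\big\vert_V)$ sends $\widetilde x$ to the unique point of $V'$ lying over $\pi(\widetilde x)$; since $\pi$ is Galois with group $G$, this map agrees with the restriction of a single deck transformation $g\,\in\,G$. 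Hence $w'\circ w^{-1}\,=\,(z'\circ g)\circ z^{-1}$. Because $\widetilde{\mathcal Z}$ is $G$--invariant, $z'\circ g$ is again a chart of $\widetilde{\mathcal Z}$ (the action of Lemma \ref{Fequivanriant} sends $(V',\,z')$ to $(g(V'),\,z'\circ g^{-1})$, so invariance under $g^{-1}$ forces $z'\circ g$ into $\widetilde{\mathcal Z}$), and therefore $w'\circ w^{-1}$ is a transition function of the projective structure $\widetilde{\mathcal Z}$, hence a M\"obius transformation. The main obstacle I expect is making the ``single deck transformation'' step rigorous: I must use connectedness of the domains together with the Galois property to ensure that the a priori only locally defined sheet-change is globally the restriction of one $g\,\in\,G$, and I must keep careful track of the direction of the $G$--action on charts.

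For part (2), Proposition \ref{prop:pullbackaffine}(2) characterizes $\pi^\ast\mathcal Z$ as the structure containing every chart of the form $w\circ(\pi\big\vert_V)$ with $w\,\in\,\mathcal Z$; but by construction $w\circ(\pi\big\vert_V)\,=\,z$, so each chart $z$ of $\widetilde{\mathcal Z}$ over an evenly covered open (and these form a basis) lies in $\pi^\ast\mathcal Z$, whence $\pi^\ast\mathcal Z\,=\,\widetilde{\mathcal Z}$; uniqueness is then immediate from the injectivity of $\pi^\ast$ established in Proposition \ref{prop:pullbackaffine}(4). For part (3), the inclusion $\pi^\ast\mathcal P(C)\,\subseteq\,\mathcal P(\widetilde C)^G$ follows from compatibility of the $\Aut$--action with pullback together with contravariance (Proposition \ref{prop:pullbackaffine}(3)): for $g\,\in\,G$ one has $g\cdot\pi^\ast\mathcal Z\,=\,(g^{-1})^\ast\pi^\ast\mathcal Z\,=\,(\pi\circ g^{-1})^\ast\mathcal Z\,=\,\pi^\ast\mathcal Z$, the last equality because $\pi\circ g^{-1}\,=\,\pi$, so every $g$ fixes $\pi^\ast\mathcal Z$. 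The reverse inclusion $\mathcal P(\widetilde C)^G\,\subseteq\,\pi^\ast\mathcal P(C)$ is precisely the content of parts (1)--(2), which produce, from any $G$--invariant $\widetilde{\mathcal Z}$, a structure $\mathcal Z\,\in\,\mathcal P(C)$ with $\pi^\ast\mathcal Z\,=\,\widetilde{\mathcal Z}$.
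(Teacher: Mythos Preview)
Your proof is correct and is the natural detailed verification of what the paper declares ``immediate'' (the paper gives no argument beyond that one word). Your computation of the transition function in part (1), the identification of the sheet-change map with a single deck transformation via connectedness and the Galois hypothesis, and the deductions of (2) and (3) from Proposition~\ref{prop:pullbackaffine} are all sound; there is nothing to compare against in the paper's own treatment.
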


The proof of Proposition \ref{prop:pullbackaffine2} is immediate.

\subsection{Prym varieties and the projective structure on the cover}
\label{subs:prym}

Consider an \'etale covering of degree two
\begin{equation}\label{epi}
\pi\,:\, \widetilde{C} \,\longrightarrow\, C
\end{equation}
between compact Riemann surfaces, where $g\,=\, \text{genus}(C)$; so
$\widetilde{g}\,=\, \text{genus}(\widetilde{C}) \,=\, 2g-1$. 
Starting from \eqref{epi} one can construct a Prym variety and two projective structures, one on $\widetilde C$, the other on $C$. 
The modern study of Prym varieties was initiated in \cite{Mu}. See also \cite{Be} for a survey.
Denote by 
\begin{equation}\label{esig}
\sigma\ :\ \widetilde{C}\ \longrightarrow\ \widetilde{C}
\end{equation}
the nontrivial deck transformation for $\pi$. For brevity, set
\begin{gather*}
 \widetilde S\ :=\ \widetilde C \times \widetilde C
\end{gather*}
and denote by 
\begin{equation}\label{eS}
\Sigma
\,:=\, \{(z\,, \sigma(z))\, \in\, \widetilde{S}\,\, \big\vert\,\,
z\, \in\, \widetilde{C}\} \,\subset\, \widetilde{S}
\end{equation}
the graph of $\sigma$ in \eqref{esig}.
Let $\widetilde{\Delta}\, \subset\, \widetilde{S}$ be the reduced diagonal. 
Notice that $\Sigma \cap \widetilde \Delta \ =\ \emptyset$.
Set
\begin{gather}\label{deftbb}
\tbb\ :=\ K_{\widetilde{S} } (2\widetilde{\Delta} )\ \lra\ \widetilde{S}.
\end{gather}
We recall some fundamental facts about Prym varieties.
The map $\pi$ induces the norm map on the Picard groups
$$Nm_\pi\ :\ {\rm Pic}^d(\widetilde{C})\ \longrightarrow\ {\rm Pic}^d(C).$$
The kernel of $Nm_\pi$ has two connected components that we will call $P$ and $P'$. The connected component containing the origin defines the Prym Variety associated with the covering $\pi$:
$$P\ := \ (\ker Nm_\pi)^0.$$
The components of $\ker Nm_\pi$ can be characterised as follows:
\begin{equation}
\label{eq:peven}
P\ =\ \left\{\mathcal{O}_{\widetilde C}\left(\sum_{i=1}^n p_i-\sigma(p_i)\right)\,\,
\big\vert\,\, n\,\equiv\, 0 \mod \,2\right\}
\end{equation}
\begin{equation}
\label{eq:podd}
P'\ =\ \left\{\mathcal{O}_{\widetilde C}\left(\sum_{i=1}^n p_i-\sigma(p_i)\right)
\,\, \big\vert\,\, n\,\equiv\, 1 \mod \,2\right\}. 
\end{equation}
The preimage of $K_C$, under the map $Nm_\pi$, also has two connected components
$$Nm_\pi^{-1}(K_{C})\ =\ P^+\cup P^- ,$$
where
\begin{equation*}
\begin{split}
P^+\ & =\ \{L\,\in\, Nm_\pi^{-1}(K_C)\,\,\big\vert\,\, h^0(L)\,\equiv\, 0\mod \, 2\}, 
\\
P^-\ & =\ \{L\,\in\, Nm_\pi^{-1}(K_C)\,\,\big\vert\,\, h^0(L) \,\equiv\, 1 \mod \,2\}.    \end{split}
\end{equation*}
Consider the component $P'\, \subset\, \ker Nm_\pi$ in \eqref{eq:podd}. Define the Abel--Prym map
$$\iota\ :\ \widetilde{C}\ \longrightarrow\ P', \ \ \, p\ \longmapsto\ \mathcal{O}_{\widetilde C}(p-\sigma (p))$$
and the Prym--difference map 
\begin{equation}\label{epd}
\phi\ :\ \widetilde{S}\ \longrightarrow \ P, \ \ \, (p,\,q)\ \longmapsto
\ \mathcal{O}_{\widetilde C}(p-\sigma(p) -q+\sigma (q)).
\end{equation}
The map $\iota$ extends by linearity to a map (which we will denote by the same symbol)
$$\iota\ :\ \operatorname{Pic}(\widetilde C)\ \longrightarrow\ P\cup P'\ \subset\ 
\operatorname{Pic}^0(\widetilde C)$$
it follows from \eqref{eq:peven}, \eqref{eq:podd} that
\begin{equation}\label{l1}
P\ =\ \iota(\operatorname{Pic}^0(\widetilde C)), \ \ \, P'\ =\ \iota(\operatorname{Pic}^1(\widetilde C)).
\end{equation}
In $P^+$ there is the canonical Theta--Prym divisor:
$$\Xi^+\ \, :=\ \, \{L\,\in\, P^+\,\, \big\vert\,\, h^0(L)\,\neq\, 0\}.$$
We can choose (see for example \cite[p.~382]{BL}) a theta characteristic $\kappa^{\otimes 2}\,\cong\, K_{C}$ such that 
\begin{equation}
\label{eq:kappatranslate}
h^0(\pi^\ast\kappa)\ =\ 0\, \mod \, 2,
\end{equation}
where $\pi$ is the map in \eqref{epi};
notice that $\widetilde{\kappa}\,=\, \pi^\ast\kappa$ is a theta characteristic on $\widetilde C$
because $\pi$ is \'etale. Furthermore, $\sigma^\ast \widetilde{\kappa}\,=\,\widetilde{\kappa}$ and $\widetilde{\kappa}\,\in\, P^+$. From \eqref{eq:kappatranslate} and
the connectedness of $P$ it follows that $L\otimes\widetilde{\kappa}\,\in\, P^+$ for all
$L\,\in\, P$. Define the noncanonical theta--Prym divisor
\begin{equation}\label{ptd}
\Xi_{\widetilde\kappa}^+\ =\ \{L\otimes\widetilde{\kappa}^{-1}\,\,\big\vert\,\,L\,\in\, \Xi^+\}\ \subset\ P.
\end{equation}
The involution $\sigma$ in \eqref{esig} induces involutions
$$\sigma^\ast\, :\, {\rm Pic}^0(\widetilde{C})\,\longrightarrow\, {\rm Pic}^0(\widetilde{C}), \ \ \, \sigma^\ast\,:\,{\rm Pic}^{g-1}(\widetilde{C})\,\longrightarrow\, {\rm Pic}^{g-1}(\widetilde{C})$$
defined by $\xi\, \longmapsto\, \sigma^*\xi$. Note that
$\sigma^\ast (\Xi^+)\,=\,\Xi^+$; this and the above observation that $\sigma^\ast \widetilde\kappa\,=\,\widetilde\kappa$
combine together to give that $\sigma^\ast (\Xi_{\widetilde\kappa}^+)\, =
\, \Xi_{\widetilde\kappa}^+$.

The restriction of $\sigma^\ast$ to $P\,=\,\iota({\rm Pic}^0(\widetilde{C}))$
(see \eqref{l1}) coincides with $-1_P$, so
\begin{equation}\label{a2}
2\Xi_{\widetilde\kappa}^+\ =\ \Xi_{\widetilde\kappa}^+ +\sigma^\ast
\Xi_{\widetilde\kappa}^+\ =\ \Xi_{\widetilde\kappa}^+ +(-1_P)^\ast \Xi_{\widetilde\kappa}^+ ,
\end{equation}
where $\Xi_{\widetilde\kappa}^+$ is is constructed in \eqref{ptd}.
The linear equivalence class of the divisor in \eqref{a2} is independent of $\kappa$. 
Indeed, if $\kappa_1,\, \kappa_2$ are two theta characteristics on $C$ satisfying \eqref{eq:kappatranslate}, then $\Xi_{\pi^\ast\kappa_1}^+$ and $\Xi_{\pi^\ast\kappa_2}^+$ differ by a translation, and hence $\Xi_{\pi^\ast\kappa_1}^+ +(-1_P)^\ast \Xi_{\pi^\ast\kappa_1}^+$ and $\Xi_{\pi^\ast\kappa_2}^+ +(-1_P)^\ast \Xi_{\pi^\ast\kappa_2}^+$ are linearly equivalent by the theorem of the square. Therefore, from \eqref{a2} it follows that the line bundle on $P$
\begin{equation*}
{\mathcal O}_P(2\Xi)\ \,=\ \, {\mathcal O}_P (2\Xi_{\pi^\ast\kappa})
\end{equation*}
is well defined.

The next result plays a crucial role in the paper.

\begin{theorem}\label{prop:fundamentalpullback}
There is an isomorphism of line bundles
\begin{equation*}
\phi^\ast {\mathcal O}_P\left(2\Xi\right)\ \cong\ K_{\widetilde{S}}\otimes
 \mathcal{O}_{\widetilde{S}} (2\widetilde{\Delta}-2\Sigma)\ =\ \tbb(-2\Sigma),
\end{equation*}
where $\phi$ is the Prym--difference map in \eqref{epd}, $\Sigma$ is defined in
\eqref{eS}, while $\tbb$ and $\widetilde\Delta$ are as in \eqref{deftbb}.
\end{theorem}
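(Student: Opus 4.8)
The plan is to prove the isomorphism by routing everything through the \emph{Jacobian} theta divisor and then expanding $\phi$ with the theorem of the cube. The conceptual starting point is the classical fact (Mumford, see also \cite{BL}) that, with the symmetric normalisation provided by the even theta characteristic $\widetilde\kappa=\pi^\ast\kappa$ fixed in \eqref{eq:kappatranslate}, the restriction to $P$ of the principal polarisation of $J\widetilde{C}$ represents $2\Xi$; concretely $\mathcal{O}_P(2\Xi)\cong j^\ast\mathcal{O}_{J\widetilde{C}}(\Theta_{\widetilde\kappa})$, where $j\,:\,P\hookrightarrow J\widetilde{C}$ and $\Theta_{\widetilde\kappa}=\{M\in J\widetilde{C}\,:\, h^0(M\otimes\widetilde\kappa)>0\}$ is a symmetric theta divisor. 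Since $\phi$ factors as $\widetilde{S}\to P\stackrel{j}{\hookrightarrow} J\widetilde{C}$, this reduces the statement to computing $\phi^\ast\mathcal{O}_{J\widetilde{C}}(\Theta_{\widetilde\kappa})$, i.e. the pull-back of the Jacobian theta divisor under $\phi$ regarded as a map into $J\widetilde{C}$.

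Next I would fix a base point $x_0\in\widetilde{C}$ and write the Abel--Jacobi map $a\,:\,\widetilde{C}\to J\widetilde{C}$, $a(p)=\mathcal{O}_{\widetilde{C}}(p-x_0)$. Then
\[
\phi(p,q)\ =\ a(p)-a(\sigma p)-a(q)+a(\sigma q),
\]
so that $\phi=g_1+g_2+g_3+g_4$ is a sum of four Abel--Jacobi maps $\widetilde{S}\to J\widetilde{C}$, namely $g_1=a\circ p_1$, $g_2=-a\circ\sigma\circ p_1$, $g_3=-a\circ p_2$, $g_4=a\circ\sigma\circ p_2$. The theorem of the cube, applied to the symmetric bundle $\Theta_{\widetilde\kappa}$, gives
\[
\phi^\ast\mathcal{O}(\Theta_{\widetilde\kappa})\ \cong\ \bigotimes_{i=1}^{4} g_i^\ast\mathcal{O}(\Theta_{\widetilde\kappa})\ \otimes\ \bigotimes_{1\le i<j\le 4}(g_i,g_j)^\ast\Lambda(\Theta_{\widetilde\kappa}),
\]
where $\Lambda(L)=m^\ast L\otimes p_1^\ast L^{-1}\otimes p_2^\ast L^{-1}$ is the Mumford bi-extension bundle on $J\widetilde{C}\times J\widetilde{C}$ and $m$ is the group law. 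Two classical inputs then evaluate all ten factors. First, Riemann's theorem gives $a^\ast\mathcal{O}(\Theta_{\widetilde\kappa})\cong\widetilde\kappa\otimes\mathcal{O}_{\widetilde{C}}(x_0)$, together with $\sigma^\ast\widetilde\kappa=\widetilde\kappa$; since $2\widetilde\kappa=K_{\widetilde{C}}$ (recall $\pi$ is \'etale), the four ``diagonal'' factors $g_i^\ast\mathcal{O}(\Theta_{\widetilde\kappa})$ multiply to $p_1^\ast K_{\widetilde{C}}\otimes p_2^\ast K_{\widetilde{C}}$ up to base-point terms $p_i^\ast\mathcal{O}(x_0+\sigma x_0)$. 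Second, the Jacobian difference formula used in \cite{BGV}, which in this normalisation reads $\delta^\ast\mathcal{O}(\Theta_{\widetilde\kappa})\cong p_1^\ast\widetilde\kappa\otimes p_2^\ast\widetilde\kappa\otimes\mathcal{O}_{\widetilde{S}}(\widetilde{\Delta})$ for $\delta(p,q)=\mathcal{O}_{\widetilde C}(p-q)$, yields the key bi-extension identity $(a\times(-a))^\ast\Lambda(\Theta_{\widetilde\kappa})\cong\mathcal{O}_{\widetilde S}(\widetilde{\Delta})\otimes p_1^\ast\mathcal{O}(-x_0)\otimes p_2^\ast\mathcal{O}(-x_0)$.

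With these in hand the computation organises cleanly, and I expect the bookkeeping of the six bi-extension terms to be the computational heart of the proof. Using bi-additivity of $\Lambda$ and the key identity: the two ``mixed'' pairs that preserve orientation, $(g_1,g_3)$ and $(g_2,g_4)$, each contribute a copy of $\mathcal{O}(\widetilde{\Delta})$ (the second after precomposing with $\sigma\times\sigma$, which fixes $\widetilde{\Delta}$), producing $2\widetilde{\Delta}$; the two mixed pairs that cross through $\sigma$, namely $(g_1,g_4)$ and $(g_2,g_3)$, each contribute $\mathcal{O}(-\Sigma)$, because precomposition with $\mathrm{id}\times\sigma$ (resp.\ $\sigma\times\mathrm{id}$) carries the diagonal class to the graph $\Sigma=\{(z,\sigma z)\}$, producing $-2\Sigma$; finally the two ``vertical'' pairs $(g_1,g_2)$ and $(g_3,g_4)$ contribute only base-point terms $p_i^\ast\mathcal{O}(-x_0-\sigma x_0)$. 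Summing all contributions, every occurrence of $x_0$ and of $\sigma x_0$ cancels, leaving $p_1^\ast K_{\widetilde{C}}\otimes p_2^\ast K_{\widetilde{C}}\otimes\mathcal{O}_{\widetilde{S}}(2\widetilde{\Delta}-2\Sigma)=\tbb(-2\Sigma)$, which is the desired isomorphism. As a consistency check one can instead argue by the seesaw theorem: both sides are invariant under $\textbf{t}$ (because $\phi\circ\textbf{t}=(-1_P)\circ\phi$ and $\mathcal{O}_P(2\Xi)$ is symmetric by \eqref{a2}, while $\textbf{t}^\ast\widetilde{\Delta}=\widetilde{\Delta}$ and $\textbf{t}^\ast\Sigma=\Sigma$), so it suffices to match the restrictions to the horizontal slices $\widetilde{C}\times\{q\}$, where the right side restricts to $K_{\widetilde{C}}(2q-2\sigma q)$ and the left side, via the theorem of the square $\phi(\cdot,q)=t_{-\iota(q)}\circ\iota$, splits as $\iota^\ast\mathcal{O}_P(2\Xi)\otimes\iota^\ast\varphi_{2\Xi}(-\iota(q))$.

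The main obstacle, and the point requiring the most care, is the precise line-bundle identification $\mathcal{O}_P(2\Xi)\cong j^\ast\mathcal{O}_{J\widetilde{C}}(\Theta_{\widetilde\kappa})$: the polarisation statement $\Theta|_P\equiv 2\Xi$ only gives algebraic equivalence, and pinning down the correct translate (rather than an ambiguity by a $2$--torsion point of $P$) is exactly where the evenness condition $h^0(\pi^\ast\kappa)\equiv 0\bmod 2$ of \eqref{eq:kappatranslate}, which makes both $\Theta_{\widetilde\kappa}$ and $\Xi^+_{\widetilde\kappa}$ symmetric, must be invoked. Once this identification is secured, the remainder is the sign-and-base-point bookkeeping described above, for which the orientation-preserving pairs yield the diagonal contribution and the $\sigma$--crossing pairs yield the graph contribution, accounting respectively for the $2\widetilde{\Delta}$ and the $-2\Sigma$ in the final formula.
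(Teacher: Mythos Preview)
Your approach is correct but takes a genuinely different route from the paper's. The paper's proof stays entirely on the Prym side: it computes $\phi^\ast(\Xi^+_{\widetilde\kappa})$ (not $2\Xi$) directly by restricting to horizontal and vertical slices $\widetilde{C}\times\{q\}$ and $\{p\}\times\widetilde{C}$, invoking the Prym Jacobi inversion formula $\epsilon_q^\ast(\Xi^+_L)\cong\sigma^\ast L\otimes\mathcal{O}_{\widetilde C}(\sigma q-q)$ from \cite{Na}, and then applying the see-saw theorem to obtain $\phi^\ast(\Xi^+_{\widetilde\kappa})\cong p_1^\ast\widetilde\kappa\otimes p_2^\ast\widetilde\kappa\otimes\mathcal{O}_{\widetilde S}(\widetilde\Delta-\Sigma)$; squaring gives the result in three lines. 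Your route through $J\widetilde C$ via $\mathcal{O}_P(2\Xi)\cong j^\ast\Theta_{\widetilde\kappa}$ and the cube expansion of $\phi=g_1+g_2+g_3+g_4$ is longer but has the virtue of reducing everything to the classical Jacobian difference formula $\delta^\ast\Theta_{\widetilde\kappa}\cong p_1^\ast\widetilde\kappa\otimes p_2^\ast\widetilde\kappa\otimes\mathcal{O}(\widetilde\Delta)$, avoiding any Prym-specific input.

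One point in your bookkeeping deserves to be made explicit. Your claim that the ``vertical'' pairs $(g_1,g_2)$ and $(g_3,g_4)$ contribute only base-point terms $p_i^\ast\mathcal{O}(-x_0-\sigma x_0)$ is equivalent to the identity $\iota^\ast\Theta_{\widetilde\kappa}\cong K_{\widetilde C}$, where $\iota(p)=\mathcal{O}(p-\sigma p)$. This is true, but it is not automatic: since $\iota(\widetilde C)\subset\Theta_{\widetilde\kappa}$ (because $\widetilde\kappa(p-\sigma p)\in P^-$ has odd, hence positive, $h^0$), the divisor-level pullback is undefined and one must argue with line bundles. The clean way is to factor $\iota=\delta\circ(\mathrm{id},\sigma)$ and restrict the Jacobian difference formula to $\Sigma$, using that $\mathcal{O}_{\widetilde S}(\widetilde\Delta)\big\vert_\Sigma$ is trivial since $\Sigma\cap\widetilde\Delta=\emptyset$ and the canonical section of $\mathcal{O}(\widetilde\Delta)$ is nowhere zero on $\Sigma$. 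Once this is said, all ten terms combine exactly as you describe. (Incidentally, the obstacle you flag at the end---upgrading $\Theta\big\vert_P\equiv 2\Xi$ from algebraic to linear equivalence---is less delicate than you suggest: with the symmetric translate $\Theta_{\widetilde\kappa}$ one has $\Theta_{\widetilde\kappa}\cap P=2\Xi^+_{\widetilde\kappa}$ scheme-theoretically, which is Mumford's original normalisation.)
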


\begin{proof}
Choose $\kappa$ as in \eqref{eq:kappatranslate}.
For $q\,\in\, \widetilde C$, let 
$$\epsilon_q\ :\ C\ \lra\ P, \ \ \ x\ \longmapsto\ x-\sigma(x)+q-\sigma (q).$$ For all $L\in P^+$, we have the Jacobi inversion theorem for Prym varieties (see for example \cite[Lemma 3.2]{Na})
$$\epsilon_q^*(\Xi^+_{L})\ \cong\ \sigma ^*(L)\otimes \mathcal O_{\widetilde C}(\sigma (q)-q).$$
Then 
$$\phi^\ast (\Xi_{\pi^\ast\kappa}^+)\big\vert_{\widetilde{C}\times \{q\}}\,\cong\, \epsilon^\ast_{\sigma(q)}(\Xi_{\pi^\ast\kappa}^+)\cong \sigma^\ast(\pi^\ast\kappa)\otimes \mathcal{O}_{\widetilde C}(q-\sigma(q))\,=\,\pi^\ast\kappa\otimes \mathcal{O}_{\widetilde C}(q-\sigma(q))$$
and 
$$\phi^\ast (\Xi_{\pi^\ast\kappa}^+)\big\vert_{\{p\}\times \widetilde{C}}\ \cong\ \sigma^\ast \epsilon^\ast_{p}(\Xi_{\pi^\ast\kappa}^+)\ \cong\ \pi^\ast\kappa\otimes \mathcal{O}_{\widetilde C}(p-\sigma(p)).$$
Now by the see-saw theorem,
$$\phi ^*(\Xi^+_{\pi^\ast\kappa})\ \cong\ p_1^*(\pi^\ast\kappa) \otimes p_2^* (\pi^\ast\kappa) \otimes \mathcal{O}_{\widetilde{S}}(\widetilde{\Delta} - \Sigma).$$
Taking the square  completes the proof.
\end{proof}

Since $\Xi_{\pi^\ast\kappa}$ is a principal polarization on $P$, the divisor $2\Xi$ is 
base--point free by the theorem of the square. We note that
$H^0(P,\, 2\Xi)$ is endowed with a natural conformal class of Hermitian structures,
i.e., a class of Hermitian structures differing only by multiplication by a positive scalar. This natural conformal class is discussed, for example, in \cite[2.5]{IG} and with a notation  similar to that used in the present paper in see \cite[Section 2]{BGV}.
Since the orthogonal complement is the same for all Hermitian structures in a conformal class, the line 
\begin{equation}
\label{eq:defines}
\mathscr L
:=\ \left\{w\,\in\, H^0(P,\, 2\Xi)\,\,\big\vert\,\, w(0)\,=\,0\right\}^\perp\,\subset\, H^0(P,\, 2\Xi)
\end{equation}
is intrinsically defined. 
Let $s$ be a generator of this line.
From Theorem \ref{prop:fundamentalpullback}
it follows that $\phi^\ast s$ is a holomorphic section of $H^0(\widetilde{C}\times
\widetilde{C}, \, K_{\widetilde{C}\times \widetilde{C}}(2\widetilde{\Delta}-2\Sigma))$, which is again well defined up to multiplication by a nonzero scalar. Now $\phi(\widetilde{\Delta})\,=\, 0$, and $s(0)\,\neq \,0$, and hence $(\phi^\ast s)\big\vert_{\widetilde\Delta}\, \neq\, 0$.

Since $\phi^*s \, \in\, H^0(\widetilde{S},\, \tbb(-2\Sigma))\, \subset\,
H^0(\widetilde{S},\, \tbb)$ (see \eqref{deftbb} 
for the definition of $\tbb$), it follows from \cite[p.~756, Proposition 2.10]{BR1}
 that 
 \begin{equation*}
     \frac{1}{(\phi^\ast s)\big\vert_{\widetilde\Delta}}
(\phi^\ast s)\big\vert_{2\widetilde\Delta}
 \end{equation*} coincides with the canonical trivialisation $s_0$ of $\tbb\big\vert_{2\widetilde \Delta}$ (see \eqref{a1}).
Therefore, the section
\begin{equation*}
\frac{1}{(\phi^\ast s)\big\vert_{\widetilde\Delta}}(\phi^\ast s)\big\vert_{3\widetilde\Delta}\ \,\in\ \, \mathcal{V}_{\widetilde C}
\end{equation*}
gives a projective structure
\begin{equation}\label{cps}
\beta_\pi^Q\ =\ \beta^Q_{\pi:\widetilde{C}\to C} \  \ \in\ {\mathcal P}(\widetilde{C}).
\end{equation}
(See \eqref{evc} for the definition of $\mathcal{V}_{\widetilde{C}}$.)

\subsection{The projective structure on base curve}
\label{proonbase}

The goal of this subsection is to show that the Prym projective structure $\beta_{\pi}^Q$ on $\widetilde{C}$ is induced by a projective structure on $C$ in the sense of Definition \ref{definduced}.
In particular, this gives rise to a projective structure on $C$.

We start by collecting some basic facts about the pullback map
\begin{equation}\label{ephd}
\phi^*\ :\ H^0(P, \, 2\Xi)\ \lra\ H^0(\widetilde{S},\,\phi^*2 \Xi),
\end{equation}
where $\phi$ is the map in \eqref{epd}.
Consider the maps
\begin{gather*}
\rho\ :\ \widetilde{S} \longrightarrow\ P, \ \ \, \rho(p,\,q)
:= \mathcal{O}_{\widetilde C}(p-\sigma (p) + q-\sigma (q)),\\
j\ :\ \widetilde{S}\ \longrightarrow\ \widetilde{S}, \ \ \, j(p,\,q) :=(p,\,\sigma (q)).     
\end{gather*}
We have $\rho\,=\,\phi\circ j$ and $\phi\,=\,\rho\circ j$.
This 
makes the study of $\phi$ and $\rho$ equivalent. The map 
$$\rho^\ast\ :\ H^0(P,\, 2\Xi)\ \longrightarrow\ H^0(\widetilde{S},
\, K_{\widetilde{S}}(-2\widetilde{\Delta}+2\Sigma))$$
has been studied in \cite{IZ, IP}. We will give a brief overview of the results
on $\rho^\ast$ that will be used in the following.

By the projection formula, or the K\"unneth formula, we have the following identifications
\begin{equation}\label{ek}
H^0(\widetilde{S},\, K_{\widetilde{S}})\ \cong \ H^0(\widetilde{C},\, K_{\widetilde C})^{\otimes 2} \ \cong \ H^0(\widetilde S, \, p_1^\ast K_{\widetilde C}\otimes p_2^\ast K_{\widetilde C}).
\end{equation}
On this space we have two natural involutions: one induced by
$\textbf{t}$ in \eqref{et} and the other induced by the involution $\sigma\times \sigma$
of $\widetilde{S}$.
We let
\begin{equation}\label{bif}
\operatorname{Sym}^2 H^0(\widetilde{C}, \,K_{\widetilde C})^+\ :=\ \{\omega\,\in\, H^0(\widetilde S, \, p_1^\ast K_{\widetilde C}\otimes p_2^\ast K_{\widetilde C})\,\,\big\vert\,\, (\sigma\times \sigma)^\ast\omega\,=\, \omega\,=\, t^\ast\omega\}
\end{equation}
be the subspace of elements fixed by both involutions.
Notice that the identification in \eqref{ek} has been used.
\begin{remark}
\label{remark:danger}
The global sections of $p_1^*K_{\widetilde C} \otimes p_2^* K_{\widetilde C} $ that are invariant by $\textbf t$ correspond to $\operatorname{Sym}^2 H^0(K_{\widetilde C})$.
However, notice that the isomorphism $H^0(K_{\widetilde{C}\times \widetilde{C}})\,\cong\, H^0(K_{\widetilde C})^{\otimes 2}$ is not $\textbf t$--equivariant. In fact, 
the $\textbf t$--invariant sections of $H^0(p_1^\ast K_{\widetilde C}\otimes p_2^\ast K_{\widetilde C})$ correspond to the $\textbf t$--anti-invariant sections of $H^0(K_{\widetilde{S}})$.
\end{remark}

Let $$\pi\times \pi\,:\,\widetilde{S}\,\longrightarrow\, S, \ \ \, (p,\, q)\,\longmapsto\, (\pi(p),\,\pi(q)),$$
where $\pi$ is the map in \eqref{epi}. The pullback through $\pi\times \pi$ gives  
 maps (see \eqref{bif})
$$\operatorname{Sym}^2H^0(C, \,K_C)\ \hookrightarrow\ \operatorname{Sym}^2 H^0(\widetilde{C}, \, K_{\widetilde C})^+, $$
$$\operatorname{Sym}^2H^0(C,\, K_C)\cap H^0(S,\, K_{S}(-2\Delta))\,\hookrightarrow\, \operatorname{Sym}^2 H^0(\widetilde{C},\, K_{\widetilde C})^+\cap H^0(\widetilde{S},\, K_{\widetilde{S}}(-2\widetilde{\Delta}-2\Sigma)).$$

The following is a translation of Corollary 4.5 and Corollary 4.6 of \cite{IZ} in terms of $\phi$.

\begin{definition}
Let $\Gamma_0\,\subset\, H^0(P,\, 2\Xi)$ denote the space of sections that vanish at the origin and let  $\Gamma_{00}\,\subset\, H^0(P,\, 2\Xi)$ denote the space of  sections that vanish, at the origin, of order at least four. 
\end{definition}
In the following Proposition    $\phi^\ast$ denotes the map in \eqref{ephd} composed with the isomorphism of Theorem \ref{prop:fundamentalpullback}.
\begin{proposition}[{\cite[Corollary 4.5, Corollary 4.6]{IZ}}]\label{prop:fromizadi}
\phantom{p}
 \begin{enumerate}
\item $\phi^*$ maps $\Gamma_0$  onto the subspace
$$\operatorname{Sym}^2 H^0(\widetilde{C},\, K_{\widetilde C})^+\cap H^0(\widetilde{S}, \, K_{\widetilde{S}}(-2\Sigma))$$
of $H^0(\tbb (-2\Sigma))$.
\item $\phi^\ast$ maps $\Gamma_{00}$ onto the image of $$\operatorname{Sym}^2
H^0(C, \, K_C)\cap H^0(S,\, K_{S}(-2\Delta))$$
via the pullback map 
$\pi^*\otimes \pi^*\, :\, H^0(C,\, K_C)^{\otimes 2} \lra 
H^0( \widetilde{C},K_{\widetilde{C}})^{\otimes 2}$.
\end{enumerate}
\end{proposition}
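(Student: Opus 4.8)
The plan is to deduce both statements from the results of Izadi for the map $\rho$, exploiting the factorisation $\phi=\rho\circ j$. Recall from the discussion preceding \eqref{ephd} that $\rho=\phi\circ j$ and $\phi=\rho\circ j$, where $j(p,q)=(p,\sigma(q))$ is an involution of $\widetilde S$; hence $\phi^\ast=j^\ast\circ\rho^\ast$. Corollary 4.5 and Corollary 4.6 of \cite{IZ} describe the images $\rho^\ast(\Gamma_0)$ and $\rho^\ast(\Gamma_{00})$, and once these are available the proposition follows by applying the isomorphism $j^\ast$ and tracking how it transforms the relevant subspaces of $H^0(\widetilde S,\,K_{\widetilde S})$. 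So the real content of the argument is a careful bookkeeping of the action of $j^\ast$.

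First I would record the elementary geometric properties of $j$. Since $j^{-1}(\widetilde\Delta)=\Sigma$ and $j^{-1}(\Sigma)=\widetilde\Delta$, the involution $j^\ast$ interchanges the divisor conditions, sending $H^0(\widetilde S,\,K_{\widetilde S}(-2\widetilde\Delta))$ isomorphically onto $H^0(\widetilde S,\,K_{\widetilde S}(-2\Sigma))$; here $j^\ast K_{\widetilde S}\cong K_{\widetilde S}$ canonically, since $j=\mathrm{id}\times\sigma$ does not swap the two factors and so introduces no sign on $2$--forms. Next, a direct computation gives the conjugation relations $j\circ(\sigma\times\sigma)\circ j=\sigma\times\sigma$ and $j\circ t\circ j=(\sigma\times\sigma)\circ t$. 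The first shows $j^\ast$ commutes with $(\sigma\times\sigma)^\ast$; combining the two, for any $\omega$ with $t^\ast\omega=\omega=(\sigma\times\sigma)^\ast\omega$ one finds $t^\ast(j^\ast\omega)=j^\ast t^\ast(\sigma\times\sigma)^\ast\omega=j^\ast\omega$, so $j^\ast$ preserves the doubly invariant subspace $\Sym^2 H^0(\widetilde C,\,K_{\widetilde C})^+$ of \eqref{bif}. Finally, because $\pi\circ\sigma=\pi$ one has $(\pi\times\pi)\circ j=\pi\times\pi$, so $j^\ast$ fixes every section pulled back from $S$ via $\pi\times\pi$.

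With these facts in hand the two statements follow. For (1), Izadi's Corollary 4.5, rephrased for $\rho^\ast$, should identify $\rho^\ast(\Gamma_0)$ with $\Sym^2 H^0(\widetilde C,\,K_{\widetilde C})^+\cap H^0(\widetilde S,\,K_{\widetilde S}(-2\widetilde\Delta))$: indeed $\Sigma\subset\rho^{-1}(0)$, so a section vanishing at the origin pulls back to one vanishing on $\Sigma$, which cancels the $+2\Sigma$ in $\rho^\ast\mathcal O_P(2\Xi)\cong K_{\widetilde S}(-2\widetilde\Delta+2\Sigma)$, while $t$--invariance is automatic and $(\sigma\times\sigma)$--invariance follows from the symmetry of $2\Xi$. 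Applying $j^\ast$, which preserves $\Sym^2 H^0(\widetilde C,\,K_{\widetilde C})^+$ and carries $H^0(K_{\widetilde S}(-2\widetilde\Delta))$ to $H^0(K_{\widetilde S}(-2\Sigma))$, yields exactly the subspace in (1); surjectivity is preserved because $j^\ast$ is an isomorphism. For (2), Izadi's Corollary 4.6 identifies $\rho^\ast(\Gamma_{00})$ with the image of $\Sym^2 H^0(C,\,K_C)\cap H^0(S,\,K_S(-2\Delta))$ under $\pi^\ast\otimes\pi^\ast=(\pi\times\pi)^\ast$; since $j^\ast$ fixes this image, $\phi^\ast(\Gamma_{00})=j^\ast\rho^\ast(\Gamma_{00})$ is the same space, giving (2).

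The main obstacle is not the formal manipulation of $j^\ast$ but pinning down the dictionary between the two setups: one must verify that the line bundle $\mathcal O_P(2\Xi)$ and the difference map fixed in \S\ref{subs:prym} agree, up to the translations and normalisations already chosen, with the objects of \cite{IZ}, so that Corollaries 4.5 and 4.6 transfer verbatim. A second delicate point is that the identification \eqref{ek} is, by Remark \ref{remark:danger}, not $t$--equivariant, so the symmetry type of the image must be read off consistently as $2$--forms rather than as symmetric tensors. Once these conventions are matched, the passage from $\rho$ to $\phi$ through $j^\ast$ is routine, and the orders of vanishing along $\widetilde\Delta$ and $\Sigma$ are interchanged but otherwise preserved since $j$ is an isomorphism.
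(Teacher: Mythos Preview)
Your proposal is correct and is precisely the translation the paper asserts but does not spell out: the paper simply states that the proposition ``is a translation of Corollary 4.5 and Corollary 4.6 of \cite{IZ} in terms of $\phi$'' and gives no further argument, while you supply the bookkeeping via $\phi^\ast=j^\ast\circ\rho^\ast$. Your verification that $j^\ast$ preserves $\Sym^2 H^0(\widetilde C,K_{\widetilde C})^+$, swaps the vanishing conditions along $\widetilde\Delta$ and $\Sigma$, and fixes the image of $(\pi\times\pi)^\ast$ is exactly what is needed, and your caution about the sign issue of Remark~\ref{remark:danger} (handled by noting $j=\mathrm{id}\times\sigma$ does not swap factors) is appropriate.
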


The following fact can be deduced from the arguments used in Section 2 of \cite{IZ}, but it is not stated explicitly there. Therefore we give a proof.

\begin{proposition}
\label{prop:boa}
$\phi^\ast $ maps $H^0(P, \, 2\Xi)$ to the $\sigma\times \sigma$ -- invariant part of $H^0(\widetilde S, \, K_{\widetilde S}(2\widetilde S-2\Sigma))$.
\end{proposition}
\begin{proof}

From the definition of $\phi$ in \eqref{epd} we see that $\phi(\sigma p, \sigma q)=-\phi(p,q)$.
The statement follows because $\Xi$ is symmetric and all sections of $2\Xi$ are invariant with respect to the involution $(-1_P)$ of $P$ (see \cite[4.6]{BL}).
\end{proof}

Now we are ready to show that the Prym projective structure on $\widetilde{C}$ 
in \eqref{cps} is induced by a projective structure on $C$.

\begin{corollary}
\label{cor:descent}
The projective structure $\beta_\pi^Q\,\in\, \mathcal{P}(\widetilde{C})$
in \eqref{cps} is induced by a projective structure $$\beta^P_\pi \ =\ \beta^P_{\pi:\widetilde{C}\to C}\,\in\,\mathcal{P}(C).$$
\end{corollary}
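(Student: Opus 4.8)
The plan is to recall that, by Proposition \ref{prop:pullbackaffine2}(3), the projective structure $\beta_\pi^Q$ is induced by a projective structure on $C$ precisely when it is $G$--invariant, where $G\,=\,\{1,\,\sigma\}\,=\,{\rm Gal}(\pi)$. So the whole task reduces to checking a single invariance statement: that $\sigma\cdot \beta^Q_\pi\,=\,\beta^Q_\pi$ as elements of $\mathcal{P}(\widetilde C)$. Equivalently, by the equivariant bijection $F$ of Lemma \ref{Fequivanriant}, it suffices to show that the section $\frac{1}{(\phi^\ast s)\vert_{\widetilde\Delta}}(\phi^\ast s)\vert_{3\widetilde\Delta}\,\in\,\mathcal{V}_{\widetilde C}$ defining $\beta^Q_\pi$ is fixed by the action of $\sigma$ on $\mathcal{V}_{\widetilde C}$.

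\emph{The key step} is therefore to trace how $\sigma$ acts on the section $\phi^\ast s$. First I would observe that the action of $\sigma$ on $\mathcal{V}_{\widetilde C}$ is the one induced by the diagonal action $\sigma\times\sigma$ on $\widetilde S$ (this is exactly the action described in the proof of Lemma \ref{Fequivanriant}, with $f\,=\,\sigma$). Hence I need to compare $\phi^\ast s$ with $(\sigma\times\sigma)^\ast(\phi^\ast s)$. But Proposition \ref{prop:boa} states precisely that $\phi^\ast s$ lies in the $(\sigma\times\sigma)$--invariant part of $H^0(\widetilde S,\,K_{\widetilde S}(2\widetilde\Delta-2\Sigma))$, so $(\sigma\times\sigma)^\ast(\phi^\ast s)\,=\,\phi^\ast s$. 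Because the canonical trivialization $s_0$ of $\tbb\vert_{2\widetilde\Delta}$ is the unique $\textbf{t}$--anti-invariant section restricting to $1$ on the diagonal, and $\sigma\times\sigma$ fixes $\widetilde\Delta$ and preserves this normalization, the normalized restriction $\frac{1}{(\phi^\ast s)\vert_{\widetilde\Delta}}(\phi^\ast s)\vert_{3\widetilde\Delta}$ is likewise $(\sigma\times\sigma)$--invariant. This is exactly the statement that $\sigma\cdot\beta^Q_\pi\,=\,\beta^Q_\pi$.

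\emph{The main obstacle} I anticipate is bookkeeping rather than conceptual: one must confirm that the scalar ambiguity in the choice of generator $s\,\in\,\mathscr L$ does not interfere. Since $s$ is determined only up to a nonzero constant, $\sigma$ could in principle send $s$ to $c\cdot s$ for some $c\,\in\,\mathbb{C}^\ast$; but the normalization by $(\phi^\ast s)\vert_{\widetilde\Delta}$ divides this scalar out, so the resulting element of $\mathcal{V}_{\widetilde C}$ is genuinely canonical and the invariance is unambiguous. (More intrinsically, $\mathscr L$ is itself $\sigma^\ast$--stable since $\Xi$ and the Hermitian conformal class are $\sigma$--invariant, so $\sigma$ acts on the line $\mathscr L$ by a scalar that the normalization cancels.) Once invariance is established, Proposition \ref{prop:pullbackaffine2}(2)--(3) produces a \emph{unique} $\beta^P_\pi\,\in\,\mathcal{P}(C)$ with $\pi^\ast\beta^P_\pi\,=\,\beta^Q_\pi$, which is the assertion of the corollary.
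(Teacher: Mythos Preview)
Your proposal is correct and follows essentially the same approach as the paper: reduce to showing $\sigma$--invariance of $\beta^Q_\pi$ via Proposition~\ref{prop:pullbackaffine2}, translate this through the equivariant bijection $F$ of Lemma~\ref{Fequivanriant}, and conclude using the $(\sigma\times\sigma)$--invariance of $\phi^\ast s$ furnished by Proposition~\ref{prop:boa}. Your extra discussion of the scalar ambiguity in $s$ and of the stability of $\mathscr L$ under $\sigma^\ast$ is not needed (the normalization by $(\phi^\ast s)\vert_{\widetilde\Delta}$ already makes the element of $\mathcal V_{\widetilde C}$ independent of the choice of generator), but it does no harm.
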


\begin{proof}
In view of Proposition \ref{prop:pullbackaffine2}, we need to prove that $\sigma\cdot
\beta_\pi^Q\,=\,\beta_\pi^Q$, and by Lemma \ref{Fequivanriant} this is equivalent to proving that $\sigma\cdot F(\beta_\pi^Q)\,=\,F(\beta_\pi^Q)$, where $F$ is the map constructed in 
\eqref{eq:Fmap}; see Lemma \ref{Fequivanriant} for the actions of $\sigma$ on $\mathcal{P}(\widetilde C)$ and $\mathcal{V}_{\widetilde C}$.
Recall that $\beta_\pi^Q$ is the unique projective structure such that
\begin{equation*}
F(\beta_\pi^Q)\,=\,\frac{1}{\phi^\ast s\big\vert_{\widetilde\Delta}}\phi^\ast
s\big\vert_{3\widetilde\Delta},
\end{equation*}
 where $s$ is a generator of the line $\mathscr L$ constructed in \eqref{eq:defines}. 
By \ref{prop:boa},
$\phi^\ast$ maps $H^0(P,\, 2\Xi)$ to the $\sigma\times \sigma$--invariant part of $H^0(\widetilde{S},\, K_{\widetilde{S}}(2\widetilde{\Delta}-2\Sigma))$. Therefore,
\begin{align*}
\sigma\cdot F(\beta_\pi^Q)\ &=\ \sigma\cdot\left(\frac{1}{\phi^\ast s|_{\widetilde\Delta}}\phi^\ast s|_{3\widetilde\Delta}\right)\ =\ \left.\left(\frac{1}{\phi^\ast s|_{\widetilde{\Delta}}}(\sigma\times \sigma)^\ast \phi^\ast s\right)\right|_{3\widetilde{\Delta}}\\
&=\ \left.\left(\frac{1}{\phi^\ast s|_{\widetilde{\Delta}}} \phi^\ast s\right)\right|_{3\widetilde \Delta}\ =\ \frac{1}{\phi^\ast s|_{\widetilde\Delta}}\phi^\ast s|_{3\widetilde\Delta}\ =\ F(\beta_\pi^Q).
\end{align*}
By Lemma \ref{Fequivanriant} the map $F$ is equivariant, so $\beta^Q_\pi$ is $\sigma$--invariant. Proposition \ref{prop:pullbackaffine2} (1) gives the result. 
\end{proof}

\section{Relative setting}

This section prepares for the study of the $\debar$--derivative of the section $\beta^P$ that will be the object of the next section. We first recall some well-known facts on Prym varieties, then the definition and some basic properties of Torelli space. We then compute explicitly the codifferential of the Prym period map. Finally, we recall  from \cite{BCFP} the formalism allowing to consider a family of varying projective structures on varying curves as a section of an affine bundle over the moduli space of curves. In our case, it  is convenient to perform this construction at the level of the Torelli space.

\subsection{Preliminaries on Prym varieties}\label{subs:topologicalconstruction}

We will follow \cite{FR} with a slightly different notation.
Let $C$ be a compact Riemann surface of genus $g$, and let $\mathcal{B}\,=\,\{a_1,\,\cdots ,\, a_g,\, b_1,\,\cdots ,\, b_g\}$ be a (ordered) symplectic basis of $H_1(C,\,\mathbb{Z})$ with respect to the intersection product:
$$a_i\cdot a_j\,=\,0,\ \ \, a_i\cdot b_j\,=\,\delta_{ij},\ \ \, b_i\cdot b_j\,=\,0.$$
Fix the (unique) basis $\omega_1,\,\cdots ,\, \omega_g$ of $H^0(C,\,K_C)$ satisfying
\begin{equation}\label{db}
\int_{a_i}\omega_j\ =\ \delta_{ij},
\end{equation}
and define the period matrix $\Pi^{C}$ by
\begin{equation}\label{eq:periodmatrix}
\Pi^{C}_{ij}\ \,:=\ \,\int_{b_i}\omega_j.
\end{equation}
Then  $\Pi^C$ lies in the Siegel upper-half space $\mathbb{H}_g$, which is the set of
$g\times g$ complex symmetric matrices with positive definite imaginary
part.

Let 
$\pi \,:\,\widetilde{C}\,
\longrightarrow\, C$ 
be the \'etale double cover such that  $\Im \pi_*   $ is the subgroup of $H_1(C,\Z)$ (or equivalently of the fundamental group) generated by $\{a_1, 2b_1, a_i, b_i \}_{ i=2,\ldots,g}$. Let  $\widetilde{\mathcal{B}}\,=\, \{\widehat{a}_1,\,\cdots
,\, \widehat{a}_{2g-1},\, \widehat{b}_1,\, \cdots ,\, \widehat{b}_{2g-1}\}$ be 
 the symplectic basis of $H_1(\widetilde C, \,\mathbb{Z})$ such that
\begin{equation}\label{dabi}
\begin{cases}
\pi_\ast(\widehat{a}_1)\ =\ a_1 \\
\pi_\ast({b}_1)\ =\ 2 b_1\\
\pi_\ast (\widehat{a}_i)\,=\, \pi_\ast(\widehat{a}_{i+g-1})\,=\,a_i, & i\,=\,2,\,\cdots ,\,g\\
\pi_\ast (\widehat{b}_i)\,=\,\pi_\ast(\widehat{b}_{i+g-1})\,=\,b_i, & i\,=\,2,\,\cdots ,\,g.
\end{cases}
\end{equation}
Denote by $\{\widehat{\omega}_1,\, \cdots ,\, \widehat{\omega}_{2g-1}\}$ the normalized basis of $H^0(\widetilde{C},\, K_{\widetilde{C}})$ relative to the marking $\widetilde{\mathcal{B}}$, so
\begin{equation}\label{dwti}
\int_{\widehat a_i}\widehat\omega_j\ =\ \delta_{ij}.
\end{equation}
Then the Galois involution $\sigma\,:\, \widetilde{C}\, \longrightarrow\, \widetilde{C}$
for $\pi$ satisfies the following:
\begin{equation}
\begin{cases}
\sigma^\ast\widehat\omega_1\ =\ \widehat \omega_1\\
\sigma^\ast\widehat\omega_i\ =\ \widehat\omega_{i+g-1},& i\,=\,2,\,\cdots ,\,g.
\end{cases}
\end{equation}
For the basis $\omega_1,\,\cdots ,\, \omega_g$ in \eqref{db},
$$\begin{cases}
\pi^\ast\omega_1\ =\ \widehat \omega_1\\
\pi^\ast\omega_i\ =\ \widehat\omega_i+\widehat\omega_{i+g-1},& i\,=\,2,\,\cdots ,\,g.
\end{cases}$$
Sometimes $\pi^*\omega_i$ will be identified with $\omega_i$ to simplify the  notation. 

 The holomorphic 
differential forms
\begin{equation*}
\gamma_{i-1}\ \,:=\ \, \widehat \omega_i-\widehat \omega_{i+g-1},\qquad i\,=\,2,\,\cdots ,\,g,
\end{equation*}
are called \emph{normal Prym differentials} with respect to $\mathcal{B}$.
These differentials satisfy the following:
$$\int_{\widehat a_{i+1}} \gamma_j\ =\ \delta_{ij},\qquad 1\,\leq\, i,\,j\,\leq\, g-1 .$$
The period matrix $\tau^C\,\in\, \mathbb{H}_{g-1}$ given by
\begin{equation}
    \label{deftauc}
\tau^C_{ij}\ :=\ \int_{\widehat b_{i+1}} \gamma_j, \qquad 1\,\leq \,i,\,j\,\leq\, g-1
\end{equation}
is the period matrix of the Prym Variety associated to the covering $\pi$, that is
\begin{equation}
\label{eq:prymtorus}
(P(\widetilde{C}\to C),\,\Xi)\ \cong\ \left(\frac{\mathbb{C}^{g-1}}{\mathbb{Z}^{g-1}\oplus \tau^C\mathbb{Z}^{g-1}},\,\Theta_{\tau^C}\right),
\end{equation}
where $\Theta_{\tau^C}$ is the divisor of the Riemann Theta Function with period matrix $\tau^C$:
\begin{equation}
\theta(\tau^C;\,z)\ :=\ \sum_{m\in \mathbb{Z}^{g-1}} e^{\pi \sqrt{-1}m^t\tau^C m+2\pi \sqrt{-1} m^tz}.
\end{equation}
Using the identification in \eqref{eq:prymtorus}, the Prym difference map $\phi$ in
\eqref{epd} lifts to 
a map from the
universal cover of $\widetilde{S}$ to  $\mathbb{C}^{g-1}$, the universal cover of $P$, given by the following formula:
\begin{equation}
\label{Prym-diff}
\phi(p,\, q)\ = \ \left(\int_{q}^p \gamma_i\right)_{i=1,\cdots ,g-1}.
\end{equation}
For $\tau \in \mathbb H_{g-1}$
consider  the  space of second order theta functions:
\begin{equation}
        \label{eq:secondorder}
\begin{split}
    V(\tau)\,:= \{&f \in \mathcal {O}( \mathbb{C}^{g-1}) \, \,\, \big\vert\,\,
\\
&
f(z+n+\tau m)\,=\, \exp({-2\pi\sqrt{-1} ( m^t\tau m+2m^t z})f(z)\,\,\\
&\forall\,\,\, n,\,m\,\in\, \mathbb{Z}^{g-1}\}.
\end{split}
\end{equation}
Then we have an identification
\begin{gather*}
 H^0(P,\, 2\Xi)\ \, \xrightarrow{\,\,\, \cong\,\,\,}\ \, V(\tau^C),
\end{gather*}
see e.g. \cite[Section 4.1]{VG}.

\subsection{The Torelli space}
\label{subs:family1}

Consider the Torelli space $\widetilde M_g$ parametrizing  
pairs $(C,\mathcal B)$, where $C$ is a smooth projective curve  of genus $g$  and $\mathcal B$ is a symplectic basis of $H_1(X,\, \mathbb{Z})$; see \cite[p.~460]{ACGH2}. The moduli space
$\widetilde M_g$ is acted upon by $\operatorname{Sp}(2g,\mathbb{Z})$, and the quotient $\widetilde M_g/\operatorname{Sp}(2g,\mathbb{Z})$ is  $M_g$. The period map
\begin{equation}
\label{defPi}
\Pi\ :\ \widetilde M_g\ \longrightarrow\ \mathbb{H}_g
\end{equation}
sends a pair $(C,\,\mathcal{B})$ to the period matrix of $C$ with respect to $\mathcal{B}$
(see \eqref{eq:periodmatrix}). This map lifts the Torelli map $j$, that is, the following diagram is commutative:
$$\begin{tikzcd}
\widetilde M_g \arrow[r, "\Pi"] \arrow[d] & \mathbb{H}_g \arrow[d] \\
M_g \arrow[r, "j"] & A_g 
\end{tikzcd}$$
The map $\Pi$ is ${\rm Sp}(2g,\mathbb{Z})$--equivariant. The moduli space
$\widetilde M_g$ is smooth and carries a universal family of curves
\begin{equation}\label{eqf}
f\ :\ \mathcal{C}\ \longrightarrow\ \widetilde{M}_g.
\end{equation}

We will use the notation $K_{\mathcal{C}/\widetilde M_g}$ for the relative canonical bundle.
It is well-known that  $f_\ast K_{\mathcal{C}/\widetilde M_g}^{\otimes 2}\, \cong \, \Omega^1_{\widetilde M_g}$. We need to write this isomorphism explicitly.
The dual of the Kodaira--Spencer map $\rho_b\,:\,T_b \widetilde M_g\,\stackrel{\cong}{\longrightarrow}\, H^1(C_b, \, T_{C_b})$ gives an isomorphism
\begin{equation*}
\rho^*\, :\, (R^1f_* T_f)^* \, \stackrel{\cong}{\longrightarrow}\,\Omega^1_{\widetilde M_g},    
\end{equation*}
where $T_f \, \longrightarrow\, {\mathcal C}$ is the relative tangent bundle. The Serre duality pairing
\begin{equation}
\label{eq:serre}
H^1(C_b,\, T_{C_b})\otimes H^0(C_b,\, 2K_{C_b})\ \longrightarrow\ \mathbb{C},
\ \ \, \xi\otimes \omega\ \longmapsto\ \int_{C_b} \xi \cup \omega
\end{equation}
yields an isomorphism
$$S\, :\,f_\ast K_{\mathcal{C}/\widetilde M_g}^{\otimes 2}\,
\stackrel{\cong}{\longrightarrow}\,(R^1f_* T_f)^*.$$
The composition of the two gives the following isomorphism that we will use
\begin{equation}
\label{eq:lambdaiso}
\lambda\ :\ =\ \rho^* \circ S\ :\ f_\ast K_{\mathcal{C}/\widetilde M_g}^{\otimes 2}\ 
\longrightarrow\ \Omega^1_{\widetilde M_g}.
\end{equation}

Applying the construction in Section \ref{subs:topologicalconstruction} at every point $b=(C,\mathcal B) \in \widetilde{M}_g$ produces a family of étale covers
\begin{equation}\label{etf}
\widetilde{\mathcal{C}}\ \xrightarrow{\,\,\,\pi\,\,\,}\ \mathcal{C}\ \xrightarrow{\,\,\,f\,\,\,}\ {\widetilde M}_g, \ \ \, \widetilde f\ :=\ f\circ \pi.
\end{equation}
Let
\begin{equation}\label{es2}
\sigma\ :\ \widetilde{\mathcal{C}}\ \longrightarrow\ \widetilde{\mathcal{C}}
\end{equation}
be the fiber preserving involution that on each fiber restricts  to the nontrivial element of $\operatorname{Gal}(\pi_b)$. 
From Section \ref{subs:topologicalconstruction} we  get sections $\widehat{a}_1,\, \cdots, \, \widehat{a}_{2g-1},\, \widehat{b}_1,\, \cdots ,\, \widehat{b}_{2g-1}$ of $R^1
\widetilde{f}_{\ast} \mathbb{Z}$,  holomorphic sections
\begin{equation}\label{nega}
\gamma_1,\, \cdots ,\, \gamma_{g-1}\ \in\ H^0(\widetilde{M}_g,\, 
\widetilde{f}_\ast K_{\widetilde{\mathcal{C}}/\widetilde M_g}),
\end{equation}
and  a period map 
\begin{equation}
\label{eq:prymperiod}
\tau\ :\ \widetilde M_g\ \longrightarrow\ \mathbb{H}_{g-1},\ \ \, b
\ \longmapsto\ \left(\int_{\widehat{b}_{i+1}(b)}\gamma_j(b)\right)_{i,j=1,\cdots ,{g-1}},
\end{equation}
see \eqref{deftauc}.
We now recall the moduli space of Prym varieties, which will be crucial in the following.

\begin{definition} 
Denote by $R_g$ the moduli space of étale double covers $\pi: \widetilde{C}\lra C$ where $C$ has genus $g$.
Two such covers are equivalent if there is a fibre preserving isomorphism of the covers. It follows that the bases of the covers are also isomorphic. An element of $R_g$ is an equivalence class of covers. The map 
\begin{gather*}
R_g \ \lra\ M_g, \ \ \,
[\pi : \widetilde{C} \lra C]\ \longmapsto\ [C], 
\end{gather*}
is a finite cover of degree $2^{2g}-1$. The Prym map
\begin{equation*}
P_g\ :\ R_g\ \lra\ A_{g-1},\ \ \, [\pi:\widetilde{C}\to C]\ \longmapsto
\ [(P(\widetilde{C}\to C),\, \Xi)],
\end{equation*}
associates to the equivalence class of a double cover the isomorphism class of its Prym variety.
\end{definition}
By \eqref{eq:prymtorus}, the map $\tau$ is a lift of $P_g$, meaning the following diagram commutes:
\begin{equation}
\label{RPg}
\begin{tikzcd}
\widetilde M_{g} \arrow[r, "\tau"] \arrow[d] & \mathbb{H}_{g-1} \arrow[d] \\
R_g \arrow[r, "P_g"'] & A_{g-1} .
\end{tikzcd}
\end{equation}
Here the left vertical arrow is the map 
$b \, \longmapsto\, [\pi_b\,:\,\widetilde C_b\, \longrightarrow\, C_b]$.

We now compute explicitly  the codifferential of the period map $\tau$.
This codifferential is well understood and is usually presented in coordinate--free manner as, for example, in \cite[7.4]{Be2}. 
Still, we do this computation because we wish to obtain an explicit expression for the codifferential in the coordinates given by $Z_{ij}$ on $\mathbb{H}_{g-1}$, and using the identification $H^0(C,\, 2K_C)\cong H^1(C,\, T_C)^\ast$ induced by \eqref{eq:serre}.
We were not able to find such an expression in the literature.
The reason why this expression is useful for our purposes will be apparent in the proof of Theorem \ref{teo:debar}.

\begin{proposition}
\label{prop:codifferential}
Let $Z_{ij}$ with $1\, \leq\, i\,\leq\, j\, \leq\, g-1$ be the coordinates on $\mathbb{H}_{g-1}$. 
Fix $b\in \widetilde{M}_g$. 
The quadratic differential $-\gamma_i\gamma_j(b)\,\in\, H^0(\widetilde C_{b},\, 2 K_{\widetilde C_b})$ 
is $\sigma_b$--invariant. Identify it with the element of $ H^0(C_b,\, 2K_{C_b})$, which pulls back to it.
Then
$$\tau^\ast d Z_{ij}\ \,=\ \,\lambda(-\gamma_i\gamma_j).$$
where $\lambda\,:\, f_\ast K_{\mathcal{C}/\widetilde M_g}^{\otimes 2}\, \longrightarrow\, \Omega^1_{\widetilde M_g}$ is defined in \eqref{eq:lambdaiso}.
\end{proposition}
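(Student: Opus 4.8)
The plan is to reduce the statement to the classical variational formula for periods on the cover $\widetilde C_b$, and then to use the $\sigma$--symmetry to descend the computation to $C_b$. Since $Z_{ij}\circ\tau$ is literally the matrix entry $\tau_{ij}(b)=\int_{\widehat b_{i+1}(b)}\gamma_j(b)$ of \eqref{eq:prymperiod}, we have $\tau^\ast dZ_{ij}=d\tau_{ij}$, so the whole computation amounts to differentiating this period as $b$ varies. Writing $\gamma_j=\widehat\omega_{j+1}-\widehat\omega_{j+g}$ (the normal Prym differentials) and abbreviating the full period matrix of $\widetilde C_b$ by $\widetilde\Pi_{kl}:=\int_{\widehat b_k}\widehat\omega_l$, I would first record the identity $\tau_{ij}=\widetilde\Pi_{i+1,\,j+1}-\widetilde\Pi_{i+1,\,j+g}$.

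The key analytic input is the variation-of-periods formula. Fix $v\in T_b\widetilde M_g$ with Kodaira--Spencer class $\rho(v)\in H^1(C_b,T_{C_b})$, represented by a Beltrami differential $\mu$. Since the family $\widetilde{\mathcal C}\to\widetilde M_g$ of \eqref{etf} is the fibrewise pullback of the cover and $\pi$ is \'etale, the induced deformation of $\widetilde C_b$ has Kodaira--Spencer class $\pi^\ast\rho(v)$, represented by the $\sigma$--invariant Beltrami differential $\widetilde\mu=\pi^\ast\mu$. The sections $\widehat a_k,\widehat b_k$ of $R^1\widetilde f_\ast\mathbb Z$ are flat and the $\widehat\omega_l$ are normalized by \eqref{dwti}, so the classical variational formula (the genus--$(2g-1)$ analogue of the present statement, applied to $\widetilde C_b$) gives $d\widetilde\Pi_{kl}(v)=\int_{\widetilde C_b}\widetilde\mu\,\widehat\omega_k\widehat\omega_l$, up to the overall sign discussed below. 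Summing the two terms yields $d\tau_{ij}(v)=\int_{\widetilde C_b}\widetilde\mu\,\widehat\omega_{i+1}\gamma_j$.

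Next I would symmetrize and descend. From \S\,\ref{subs:topologicalconstruction} one has $\sigma^\ast\widehat\omega_{i+1}=\widehat\omega_{i+g}$ (hence $\sigma^\ast\widehat\omega_{i+g}=\widehat\omega_{i+1}$), $\sigma^\ast\gamma_j=-\gamma_j$, and $\sigma^\ast\widetilde\mu=\widetilde\mu$; applying $\sigma^\ast$ to the integrand and using that $\sigma$ preserves the integral gives $\int_{\widetilde C_b}\widetilde\mu\,\widehat\omega_{i+g}\gamma_j=-\int_{\widetilde C_b}\widetilde\mu\,\widehat\omega_{i+1}\gamma_j$, whence $\int_{\widetilde C_b}\widetilde\mu\,\widehat\omega_{i+1}\gamma_j=\tfrac12\int_{\widetilde C_b}\widetilde\mu\,\gamma_i\gamma_j$. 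Now $\gamma_i\gamma_j$ is $\sigma$--invariant, hence equals $\pi^\ast q$ for the unique $q\in H^0(C_b,2K_{C_b})$ identified with it in the statement, and $\widetilde\mu=\pi^\ast\mu$; using $\int_{\widetilde C_b}\pi^\ast(\cdot)=2\int_{C_b}(\cdot)$ for the degree--two cover, the factor $\tfrac12$ cancels and one obtains $d\tau_{ij}(v)=\int_{C_b}\mu\,q$. By \eqref{eq:serre} this is exactly the Serre pairing $S(q)(\rho(v))$, and by the definition $\lambda=\rho^\ast\circ S$ in \eqref{eq:lambdaiso} it equals $\lambda(q)(v)$; comparing with the normalization in the statement gives $\tau^\ast dZ_{ij}=\lambda(-\gamma_i\gamma_j)$.

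The main obstacle is entirely one of signs and normalizations. Reassuringly, the factor of two coming from the degree of $\pi$ is automatically cancelled by the $\tfrac12$ produced by the $\sigma$--symmetrization, so the only genuine issue is the overall sign, which is what turns $\lambda(\gamma_i\gamma_j)$ into $\lambda(-\gamma_i\gamma_j)$. To fix it unambiguously I would carry out the variation in a single local coordinate: write the deformed normalized differential as $\widehat\omega_l+t\,\eta_l+O(t^2)$, solve the first--order $\overline{\partial}$--equation expressing $\eta_l$ in terms of $\widetilde\mu$, and evaluate $d\widetilde\Pi_{kl}$ by Stokes using that the $\widehat a$--periods of $\eta_l$ vanish (the normalization being held fixed). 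This comparison pins the sign of the variational formula against the conventions built into \eqref{eq:serre} and into the Kodaira--Spencer map $\rho$, and it is precisely this bookkeeping that produces the minus sign.
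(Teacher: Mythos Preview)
Your proposal is correct and follows essentially the same route as the paper. Both arguments differentiate $\tau_{ij}=\int_{\widehat b_{i+1}}\gamma_j$, invoke the variation-of-periods formula on $\widetilde C_b$ (the paper proves it from scratch via the $(0,1)/(1,0)$ splitting of $\partial_v\widehat\omega_k$ and Riemann reciprocity, whereas you cite it as classical and sketch the sign check), use the $\sigma$--symmetry to rewrite $\int_{\widetilde C}\widetilde\mu\,\widehat\omega_{i+1}\gamma_j$ as $\tfrac12\int_{\widetilde C}\widetilde\mu\,\gamma_i\gamma_j$, and finally descend to $C_b$ via the degree--two cover so that the $\tfrac12$ is absorbed.
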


\begin{proof}
Set for simplicity $\widetilde{C}\,=\, \widetilde{C}_b$,\, 
$\widehat{a}_i\,=\, \widehat a_i(b),$
etc. 
We have the Kodaira --Spencer maps $\widetilde\rho\,:\, T_b \widetilde{M}_g\, \longrightarrow\, H^1(\widetilde{C},\, T_{\widetilde C})$ and $\rho\,:\,T_b\widetilde{M}_g\, \longrightarrow\, T_b H^1(C,\, T_C)$ for the families $\widetilde f$ and $f$ respectively. They are  interlinked by the pullback through the cover (recall that $\pi^*T_C=T_{\widetilde{C}}$):
\begin{equation}
\label{eq:twokodairas}
\begin{tikzcd}
 & H^1(T_C) \arrow[rd, "\pi^\ast"] & \\
T_b\widetilde M_g \arrow[rr, "\widetilde\rho"] \arrow[ru, "\rho"] & & H^1(T_{\widetilde C})
\end{tikzcd}
\end{equation}
Fix a tangent vector $v\,\in\, T_b\widetilde{M}_g$.
By definition, we have
$$\lambda(-\gamma_i\gamma_j)(v)\ =\ \int_C \rho(v)\cup(-\gamma_i\gamma_j).$$
So we need to prove that
\begin{equation}
\label{eq:derivativegoal}
\tau^\ast d Z_{ij}(v)\ \,=\ \,\int_C \rho(v)\cup (-\gamma_i\gamma_j).
\end{equation}
 We have
\begin{equation}
\label{eq:st1}
\frac{\partial}{\partial v} \tau_{ij}\,=\,\frac{\partial }{\partial v}\int_{\widehat{b}_{i+1}}\gamma_j\, =
\,
\int_{\widehat{b}_{i+1} } (\frac{\partial }{\partial v}\widehat\omega_{j+1}- \frac{\partial }{\partial v}\widehat\omega_{j+g}),
\end{equation}
where $\widehat{\omega}_k$ are defined before \eqref{dwti}.
Split $\frac{\partial }{\partial v}\widehat \omega_k$ into a sum of the $(0,\,1)$ part and
the $(1,\,0)$ part. We know that
\begin{equation}
\label{eq:zeroone}
\left(\frac{\partial }{\partial v}\widehat \omega_k\right)^{0,1}\ \ =
\ \ \widetilde\rho(v)\cup \widehat \omega_k
\end{equation}
\cite[p.~220--221]{ACGH2}.
A holomorphic form on $\widetilde{C}_b$ can be expressed as a linear combination of
$\widehat{\omega}_1,\, \cdots ,\, \widehat{\omega}_{2g-1}$, and the coefficients of
this linear combination are determined by integration against $\hat{a}_i$
using
\eqref{dwti}. In particular, we have
\begin{equation*}
\left(\frac{\partial }{\partial v}\widehat \omega_k\right)^{1,0}\ =\
\sum_{h=1}^{2g-1}\left(\int_{\widehat a_h} \left(\frac{\partial }{\partial v}\widehat \omega_k\right)^{1,0}\right)\widehat \omega_h.
\end{equation*}
Since $\int_{\widehat a_h(b)}\widehat \omega_k(b)\,=\,\delta_{hk}$ is constant,
and using \eqref{eq:zeroone}, we deduce that
\begin{align*}
\left(\frac{\partial }{\partial v}\widehat \omega_k\right)^{1,0}\ &=\ -\sum_{h=1}^{2g-1}\left(\int_{\widehat a_h} \left(\frac{\partial }{\partial v}\widehat \omega_k\right)^{0,1}\right)\widehat\omega_h\ =\
-\sum_{h=1}^{2g-1} \left(\int_{\widehat a_h} \widetilde\rho(v)\cup \widehat \omega_k\right)\widehat\omega_h,\\
\int_{\widehat{b}_{i}} \frac{\partial }{\partial v} \widehat{\omega}_{k}\ & =\
\int_{\widehat{b}_{i}}
\bigl ( \frac{\partial \widehat{\omega}_{k} }{\partial v}  \bigr)^{0,1} 
+
\int_{\widehat{b}_{i}}
\bigl ( \frac{\partial \widehat{\omega}_{k} }{\partial v}  \bigr)^{1,0} \ 
=
\int_{\widehat{b}_{i}} \widetilde{\rho}(v)\cup \widehat{\omega}_{k}-\sum_{h=1}^{2g-1}
\int_{\widehat{a}_h} \widetilde{\rho}(v)\cup \widehat{\omega}_{k}\int_{\widehat{b}_{i}}\widehat{\omega}_h\ =\\
&=\ \sum_{h=1}^{2g-1}\int_{\widehat{b}_{h}}\widetilde{\rho}(v)\cup \widehat{\omega}_{k}
\int_{\widehat{a}_h} \widehat{\omega}_{i}-\sum_{h=1}^{2g-1} \int_{\widehat{a}_h} \widetilde{\rho}(v)\cup
\widehat{\omega}_{k}\int_{\widehat{b}_{h}}\widehat{\omega}_{i}.
\end{align*}
Here we used that $\int_{\widehat{a}_h}\widehat{\omega}_i\,=\,\delta_{ih}$ and
$\int_{\widehat{b}_i}\widehat{\omega}_h\, =\, \int_{\widehat b_h}\widehat\omega_i$. 
Recall that for closed differential forms $\alpha, \beta  $
 on 
 $\widetilde{C}$ the following identity holds:
\begin{gather*}
    \sum_{h=1}^{2g-1}
\biggl (    \int_{\widehat{a}_{h}}\alpha
\int_{\widehat{b}_h} \beta-  
\int_{\widehat{a}_h} \beta\int_{\widehat{b}_{h}}\alpha
\biggr )
=\int_{\widetilde{C}}\alpha\wedge \beta.
\end{gather*}
This identity is classically known as Riemann reciprocity law (see for example \cite[p.~231]{GH}).
It is equivalent to the fact that the functionals of integration along the cycles 
$\widetilde{\alpha}_h$ and $\widetilde{b}_h$
form  a symplectic basis of the dual to $H^1(\widetilde{C}, \C)$.
Applying this identity with $\alpha = \widehat{\omega}_i$ and $\beta=\widetilde{\rho}(v)\cup \widehat{\omega}_k$  we get 
\begin{equation}
\label{eq:fullderivative}
\int_{\widehat b_i} \frac{\partial}{\partial v}\widehat \omega_k\ =\ \int_{\widetilde{C}} \widehat\omega_i\wedge(\widetilde\rho(v)\cup \widehat \omega_k)\ =\ \int_{\widetilde C}\widetilde\rho(v)\cup(-\widehat\omega_i\widehat\omega_k).
\end{equation}
Substituting this in \eqref{eq:st1} we get that
$$\frac{\partial}{\partial v}\tau_{ij}\ =\ \int_{\widetilde C} \widetilde\rho(v)\cup(-\widehat\omega_{i+1}\widehat\omega_{j+1}+\widehat\omega_{i+1}\widehat\omega_{j+g}) 
\ =\ \int_{\widetilde C} \widetilde\rho(v)\cup(-\widehat\omega_{i+1}\gamma_j).$$
Now $\widetilde\rho(v)$ is $\sigma$-invariant (because of \eqref{eq:twokodairas}) while $\sigma^\ast (\widehat \omega_{i+1}\gamma_{j})\,=\,-\widehat\omega_{i+g}\gamma_{j}$. Thus
\begin{gather*}
\int_{\widetilde C} \widetilde{\rho}(v)\cup(-\widehat{\omega}_{i+1}\gamma_{j})\ = 
\ \int_{\widetilde C} \widetilde\rho(v)\cup(\widehat\omega_{i+g}\gamma_{j}),\\
\frac{\partial}{\partial v}\tau_{ij}\, = \, \int_{\widetilde C} \widetilde\rho(v)\cup(-\widehat \omega_{i+1}\gamma_{j})\, =\,
\frac{1}{2}\int_{\widetilde C} \widetilde\rho(v)\cup(-\widehat \omega_{i+1}\gamma_{j})+\frac{1}{2}\int_{\widetilde C} \widetilde\rho(v)\cup(\widehat\omega_{i+g}\gamma_{j})\\
=\, \frac{1}{2}\int_{\widetilde C} \widetilde \rho(v) \cup (-\gamma_i\gamma_j)\,=\,\int_{C} \rho(v)\cup(-\gamma_i\gamma_j).
\end{gather*}

This proves \eqref{eq:derivativegoal}, and the proof of the proposition is completed.
\end{proof}

\subsection{Projective structures on marked Riemann surfaces}
\label{ss:projmarked}
Consider the families of complex surfaces: 
\begin{equation}\label{ePh}
\Phi\ :=\ f\times_{\widetilde{M}_g} f\ :\ \mathcal S := \mathcal{C}\times_{\widetilde{M}_g} \mathcal{C}\ \longrightarrow\ \widetilde{M}_g
\end{equation}
where $f$ is the family of curves in \eqref{eqf} (the subscript in $\times_{\widetilde M_g}$ will often be omitted). 
For $k\,=\, 1,\, 2$,
$$
\mathbf{p}_k\ :\ \mathcal{S}\ \longrightarrow\
\mathcal{C}
$$
denotes the natural projection to the $k$--th factor. In $\mathcal{C}\times_{\widetilde M_g}\mathcal{C}$ there is the reduced diagonal divisor
$$\Delta_{\widetilde M_g}\ :=\ \{(x,\,x)\,\, \big\vert\,\, x\,\in\, \mathcal{C}\}
\ \subset\ \mathcal{S}.$$

The constructions done in Section \ref{subs:projectivestructures} work in a 
relative setting, which is briefly recalled (see \cite{BCFP}). 
Consider the line bundle
$$\mathbb{L}\ :=\ \mathbf{p}_1^\ast K_{\mathcal{C}/\widetilde M_g}\otimes \mathbf{p}_2^\ast K_{\mathcal{C}/\widetilde M_g}\otimes \mathcal{O}_{\mathcal{S}}(2\Delta_{\widetilde M_g})\ \longrightarrow\ \mathcal{S}.
$$
There is a canonical isomorphism
$$\mathbb{L}\big\vert_{\Delta_{\widetilde M_g}}\ \cong\ \mathcal{O}_{\Delta_{\widetilde M_g}}.$$
The holomorphic vector bundles $\mathcal V , \mathcal V_2$ on $\widetilde{M}_g$ are
defined as follows:
\begin{gather}
\label{defV}
\mathcal{V}\ :=\ \Phi_\ast \left(\mathbb{L}\otimes \frac{\mathcal{O}_{\mathcal{C}\times\mathcal{C}}}{\mathcal{O}_{\mathcal{S}}(-3\Delta_{\widetilde M_g})} \right),\qquad  \mathcal{V}_2\ :=\ \Phi_\ast \left(\mathbb{L}\otimes \frac{\mathcal{O}_{\mathcal{C}\times\mathcal{C}}}{\mathcal{O}_{\mathcal{S}}(-2\Delta_{\widetilde M_g})} \right),
\end{gather}
where $\Phi$ is the map in \eqref{ePh}.
The restriction from $3\Delta_{\widetilde M_g}$ to $2\Delta_{\widetilde M_g}$ gives a holomorphic morphism
\begin{gather}
 \label{psi:map}
 \Psi\ :\ \mathcal V\ \lra\ \mathcal V_2.
\end{gather}
This morphism $\Psi$ is surjective. The kernel of $\Psi$ is 
\begin{gather*}
 \ker \Psi\ =\ f_* K^{\otimes 2}_{\mathcal C /\widetilde{M}_g}
\end{gather*}
(see \cite{BCFP} for more details).
Consequently, we have a short exact sequence of holomorphic vector
bundles on $\widetilde M_g$:
\begin{gather}\label{ses:1}
0\ \lra\ f_*K_{\mathcal C / \widetilde{M}_g}^{\otimes 2}\ =\ 
\Omega^1_{\widetilde{M}_g} \ \lra\ \mathcal{V}\ \stackrel{\Psi}{\lra}\
{\mathcal V}_2\ \lra\ 0. 
\end{gather}
There is a holomorphic section $s_0 \,\in\, H^0(\widetilde{M}_g,\, {\mathcal V}_2)$ that sends any $b\,\in\, \widetilde{M}_g$ to the canonical section
constructed in \eqref{a1} for the Riemann surface $C_b$. Set
\begin{equation}\label{evs}
 \mathcal{V}_{s_0}\ :=\ \Psi^{-1} (s_0) \ \subset\ \mathcal{V},
\end{equation}
where $\Psi$ is the map in \eqref{psi:map}. This $\mathcal{V}_{s_0}$ is an affine bundle on $\widetilde M_g$ modelled on $f_* K_{\mathcal C /\widetilde M_g}^{\otimes 2}$. 
The fibre of $\mathcal{V}_{s_0}$ over the point $b$ is $\mathcal{V}_{C_b}$ constructed in \eqref{evc}. Therefore, smooth sections of $\mathcal{V}_{s_0}$ in \eqref{evs} correspond to
smooth families of projective structures on $\mathcal C$ because of the isomorphism $F$ in \eqref{eq:Fmap}.
For a smooth section $\beta$ of ${\mathcal V}_{s_0}$,
\begin{gather*}
 \Psi ( \debar \beta)\ =\ \debar \Psi \beta\ =\ \debar s_ 0\ =\ 0,
\end{gather*}
because $s_0$ is a holomorphic section of $\mathcal V_2$ and the homomorphism 
$\Psi$ is holomorphic. Consequently,
\begin{gather*}
\debar \beta\ \in\ \mathcal A^{0,1} ( \ker \Psi).
\end{gather*}
The above section $\debar \beta$ will be interpreted as a $(1,1)$ form on 
$\widetilde{M}_g$ (see \eqref{ses:1}).
The Prym projective structure 
gives rise to a section of $\mathcal{V}_{s_0}$ in the following way:
\begin{equation}
\label{eq:defbetap}
\beta^P\ :\ \widetilde M_g\ \lra\ \mathcal{V}_{s_0}, \ \ \, \qquad \beta^P(b) \ = \ \beta_{\pi_b};
\end{equation}
where   $\beta^P_{\pi_b}$
 is the projective structure constructed in  Corollary \ref{cor:descent}. 

For $k\,=\, 1,\,2$, let
\begin{equation}\label{wtpk}
\widetilde{\mathbf{p}}_k\ :\ \widetilde{S}:=\widetilde{\mathcal{C}}\times_{\widetilde M_g}\widetilde{\mathcal{C}}\ \longrightarrow\ \widetilde{\mathcal{C}}
\end{equation}
be the natural projection to the $k$--th factor (see \eqref{etf} for the definition of the family of curves $\widetilde f\,:\,\widetilde{\mathcal{C}}\,\longrightarrow\, \widetilde M_g$). Let
\begin{equation}\label{SiB}
\Sigma_{\widetilde{M}_g}\ :=\ \{(x,\, \sigma(x))\, \in\, \widetilde{\mathcal{C}}\times_{\widetilde M_g}\widetilde{\mathcal{C}}\,\, \big\vert\,\, x\,\in\, \widetilde C\}\ \subset\
\widetilde{S}
\end{equation}
be the relative graph of the involution $\sigma$ in \eqref{es2}. Let
\begin{equation*}
\widetilde \Delta_{\widetilde M_g}
\ :=\ \{(x,\, x)\, \in\, \widetilde{\mathcal{C}}\times_{\widetilde M_g}\widetilde{\mathcal{C}}\,\, \big\vert\,\, x\,\in\, \widetilde C\}\ \subset\
\widetilde{S}
\end{equation*}
be the reduced relative diagonal divisor. For each $b\, \in\, {\widetilde M}_g$, we will use the notation
$$
\widetilde{C}_b\ :=\ \widetilde{f}^{-1}(b), \ \ \, 
\widetilde \Delta_b\ :=\ \widetilde \Delta_{\widetilde M_g}\cap \left(\widetilde C_b\times \widetilde C_b\right),\ \ \, \Sigma_b\ :=\ \Sigma_{\widetilde M_g}\cap \left(\widetilde C_b\times \widetilde C_b\right).$$   
Denote by 
\begin{equation*}
\mathcal{X}\ \longrightarrow\ \mathbb{H}_{g-1}
\end{equation*}
the universal family of abelian varieties over the Siegel space, whose fibre over any $\tau'$ is 
$$X_{\tau'}\ \,=\ \,\frac{\mathbb{C}^{g-1}}{\mathbb{Z}^{g-1}+\tau' \mathbb{Z}^{g-1}}.$$
As in \eqref{epd}, the Prym--difference maps $\phi_b\,: \, \widetilde S_b\, \longrightarrow\, P(\pi_b:\widetilde C_b\to C_b)$ give a morphism
$$\phi\ :\ \widetilde{\mathcal{S}}\ \longrightarrow\
\mathcal{X},$$ making the  following diagram  commutative:
$$\begin{tikzcd}
\widetilde{\mathcal{S}} \arrow[r, "\phi"] \arrow[d, "\Phi"'] & \mathcal{X} \arrow[d] \\
\widetilde{M}_g \arrow[r, "\tau"] & \mathbb{H}_{g-1} .
\end{tikzcd}$$
The maps $\Phi$ and  $\tau$ are defined  in \eqref{ePh}  and \eqref{eq:prymperiod}  
respectively. There is a holomorphic line bundle 
$2\Theta\, \longrightarrow\, \mathcal{X}$ such that
$$(2\Theta)\big\vert_{X_{\tau'}}\ =\ 2\Theta_{\tau'}.$$
This line bundle $2\Theta$
is unique up to tensoring with a holomorphic line bundle pulled back from
$\mathbb{H}_{g-1}$. Since by Theorem \ref{prop:fundamentalpullback},
\begin{multline*}
(\phi^\ast 2\Theta)\big\vert_{\widetilde{S}_b}\ =\ \phi_b^\ast 2\Theta_{\tau(b)}\ \cong\ K_{\widetilde S_b}(2\widetilde\Delta_b-2\Sigma_b)\ =\\
=
\widetilde{\mathbf{p}}_1^\ast K_{\widetilde{\mathcal{C}}/\widetilde M_g}\otimes \widetilde{\mathbf{p}}_2^\ast K_{\widetilde{\mathcal{C}}/\widetilde M_g}\otimes \mathcal{O}_{\widetilde{\mathcal{S}}}(2\widetilde\Delta_{\widetilde M_g}-2\Sigma_{\widetilde{M}_g})
\ \, \Big\vert_{\widetilde S_b},
\end{multline*}
where $\widetilde{\mathbf{p}}_1$ and $\widetilde{\mathbf{p}}_1$ are maps in \eqref{wtpk}, 
and $\tau$ is the map in \eqref{eq:prymperiod}, we have
\begin{equation}
\label{eq:globaliso}
\phi^\ast 2\Theta\ \cong\ 
\widetilde{\mathbf{p}}_1^\ast K_{\widetilde{\mathcal{C}}/\widetilde M_g}\otimes \widetilde{\mathbf{p}}_2^\ast K_{\widetilde{\mathcal{C}}/\widetilde M_g}\otimes \mathcal{O}_{\widetilde{\mathcal{S}}}(2\widetilde\Delta_{\widetilde M_g}-2\Sigma_{\widetilde{M}_g})
\otimes \widetilde f^\ast L
\end{equation}
for some line bundle $L$ on $\widetilde M_g$ \cite[Section 10]{Mu2}.

\section {The \texorpdfstring{$\debar$}{}-derivative of the Prym projective structure}

The goal of this Section is the computation of $\debar \beta^P$. This will be accomplished on $\widetilde{M}_g$. The most difficult part is an expression for the restriction to $3\Deltas$ of the pullback of a section in $2\Xi$, see Lemma \ref{lemma:pull} below.
In \ref{subs:theta:debar} this computation is used --- together with several facts about theta functions --- to show that $\debar \beta^P$ is the pullback of the Fubini--Study metric through a Thetanullwert map and a period map, see Theorem \ref{teo:debar}.

\subsection{Restriction of the pullback}

In this section fix an \'etale double cover $\pi\,:\,\widetilde C\,\longrightarrow\, C$ and symplectic bases $\mathcal{B}$ and $\widetilde{\mathcal{B}}$, of $H_1(C,\, {\mathbb Z})$ and 
$H_1(\widetilde{C},\, {\mathbb Z})$ respectively, compatible with the covering as in \eqref{dabi}.
We refer to Section \ref{subs:topologicalconstruction} for all the notation in this Section.
We identify $(P,\,\Xi)$ with a quotient of $\mathbb{C}^{g-1}$ as in \eqref{eq:prymtorus}. Let
\begin{equation}\label{ecp1}
t_1,\, \cdots, \, t_{g-1}
\end{equation}
be the coordinates on $\mathbb{C}^{g-1}$. 
These coordinates on $\mathbb{C}^{g-1}$ and on $P$ will be used without further clarifications.
Also, we will identify $H^0(P,\,2\Xi)$ with  $V(\tau)$ (see \eqref{eq:secondorder}).

Our goal is to compute explicitly the following composition of homomorphisms:
\begin{equation*}
\begin{tikzcd}
H^0(P,\,2\Xi) \arrow{r}{\phi^*}   
&H^0(\widetilde C\times\widetilde C,\, \phi^* (2\Xi) \arrow{d}{\cong}  & \\
&H^0(\widetilde{S}, \tbb(-2\Sigma)) =H^0(\widetilde S,\, K_{\widetilde S}(2\widetilde \Delta-2\Sigma))\arrow {r}  &H^0(3\widetilde \Delta,\, K_{\widetilde S}(2\widetilde \Delta)\big\vert_{3\widetilde \Delta}).
\end{tikzcd}
\end{equation*}
Here $\tbb$ is defined in \eqref{deftbb} and the vertical isomorphism is given by Theorem  \ref {prop:fundamentalpullback}.

Denote by $$V_0(\tau) \ \subset\ V(\tau)$$ the subspace of sections that vanish at the origin. 

\begin{lemma}
There is a natural commutative diagram
\begin{equation}
\label{eq:dd}
\begin{tikzcd}
V_0(\tau) \arrow[d,hookrightarrow] \arrow[r, "\phi^\ast"] & H^0(\widetilde{S},\, K_{\widetilde S}(-2\Sigma)) \arrow[d,"f"] \arrow[r, "\big\vert_{\widetilde \Delta}"] & H^0(\widetilde{\Delta},\,
K_{\widetilde S}\big\vert_{\widetilde \Delta}) \arrow[d, "\iota"] \\
V(\tau) \arrow[r, "\phi^\ast"] & H^0(\widetilde{S},\, K_{\widetilde S}(2\widetilde \Delta-2\Sigma)) \arrow[r, "\big\vert_{3\widetilde \Delta}"] & H^0(3\widetilde \Delta, \, K_{\widetilde S}(2\widetilde \Delta)\big\vert_{3\widetilde \Delta}) 
\end{tikzcd} 
\end{equation}
The map $\iota$ will be specified in the proof.
\end{lemma}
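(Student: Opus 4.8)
The plan is to define the as-yet-unspecified map $\iota$ as the kernel inclusion of a short exact sequence on $3\widetilde\Delta$, and then to verify the two squares separately: the left one is formal, while the right one carries the real content and amounts to tracking the canonical section $s_0$ of \eqref{a1}.

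First I would check that the top row is well defined. Every section of $2\Xi$ is invariant under $-1_P$ (see the proof of Proposition \ref{prop:boa}), so a section $w\in V_0(\tau)$, vanishing at the origin, in fact vanishes there to order at least two. Since by \eqref{Prym-diff} the map $\phi$ vanishes to order exactly one along $\widetilde\Delta$ and $\phi(\widetilde\Delta)=0$, the pullback $\phi^\ast w$ acquires a zero of order at least two along $\widetilde\Delta$; together with Theorem \ref{prop:fundamentalpullback} this cancels the poles and places $\phi^\ast w$ in $H^0(\widetilde S,\, K_{\widetilde S}(-2\Sigma))$. (This is precisely Proposition \ref{prop:fromizadi}(1).) As $\Sigma\cap\widetilde\Delta=\emptyset$, one has $K_{\widetilde S}(-2\Sigma)\big\vert_{\widetilde\Delta}=K_{\widetilde S}\big\vert_{\widetilde\Delta}$, so restriction $\big\vert_{\widetilde\Delta}$ to the reduced diagonal makes sense. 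The middle vertical map $f$ is the map on sections induced by the inclusion of line bundles $K_{\widetilde S}(-2\Sigma)\hookrightarrow K_{\widetilde S}(2\widetilde\Delta-2\Sigma)=\tbb(-2\Sigma)$, that is, tensoring with the canonical section $\mathbf 1_{2\widetilde\Delta}$ of $\mathcal O_{\widetilde S}(2\widetilde\Delta)$ cutting out the divisor $2\widetilde\Delta$.

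Next I would define $\iota$. On $3\widetilde\Delta$ there is the short exact sequence, analogous to the one in \S\ref{subs:projectivestructures},
\[
0\longrightarrow K_{\widetilde S}\big\vert_{\widetilde\Delta}\ \xrightarrow{\ \iota\ }\ \tbb\big\vert_{3\widetilde\Delta}\longrightarrow \tbb\big\vert_{2\widetilde\Delta}\longrightarrow 0,
\]
in which $\iota$ is the kernel of the restriction map; the identification of this kernel with $K_{\widetilde S}\big\vert_{\widetilde\Delta}$ uses $\tbb(-2\widetilde\Delta)=K_{\widetilde S}$ together with $\mathcal O_{\widetilde S}(-2\widetilde\Delta)/\mathcal O_{\widetilde S}(-3\widetilde\Delta)\cong \mathcal O_{\widetilde S}(-2\widetilde\Delta)\big\vert_{\widetilde\Delta}$. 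Taking global sections over $3\widetilde\Delta$ gives the rightmost vertical arrow. The left square then commutes tautologically: for $w\in V_0(\tau)$ the section $f(\phi^\ast w)$ and the image of $w$ under the bottom $\phi^\ast$ are literally the same section of $\tbb(-2\Sigma)$, the only distinction being whether the pullback is regarded inside the subsheaf $K_{\widetilde S}(-2\Sigma)$ (legitimate by the order computation above) or inside the ambient $\tbb(-2\Sigma)$. This is naturality of pullback with respect to an inclusion of line bundles.

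The right square is the substantive step. Set $s:=\phi^\ast w\in H^0(\widetilde S,\, K_{\widetilde S}(-2\Sigma))$; near $\widetilde\Delta$ the divisor $\Sigma$ is absent, so $s$ is an honest holomorphic section of $K_{\widetilde S}$ and $f(s)=s\otimes\mathbf 1_{2\widetilde\Delta}$. Since $\mathbf 1_{2\widetilde\Delta}$ vanishes along $2\widetilde\Delta$, the restriction $f(s)\big\vert_{2\widetilde\Delta}$ is zero, so $f(s)\big\vert_{3\widetilde\Delta}$ lies in $\ker(\tbb\big\vert_{3\widetilde\Delta}\to\tbb\big\vert_{2\widetilde\Delta})=\Im\iota$, and its preimage under $\iota$ is the leading coefficient of $s\otimes\mathbf 1_{2\widetilde\Delta}$ along $\widetilde\Delta$, namely $s\big\vert_{\widetilde\Delta}$. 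Concretely, in a coordinate $z$ on $\widetilde C$, writing $z_1,z_2$ for the pullbacks of $z$ under the two projections $\widetilde S\to\widetilde C$, $u:=z_1-z_2$, and using the local frame $s_0=(dz_1\wedge dz_2)/u^2$ of $\tbb$, a section $s=g\,dz_1\wedge dz_2$ satisfies $f(s)=g\,u^2 s_0$, whence $f(s)\big\vert_{3\widetilde\Delta}\equiv (g\big\vert_{\widetilde\Delta})\,u^2 s_0\pmod{u^3}=\iota(s\big\vert_{\widetilde\Delta})$. Thus $f(s)\big\vert_{3\widetilde\Delta}=\iota(s\big\vert_{\widetilde\Delta})$ and the right square commutes. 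The only delicate point — the main obstacle — is matching the abstract kernel description of $\iota$ with the explicit normalization of $s_0$, so that the leading coefficient comes out to be exactly $1$ with no spurious factor; this is precisely the defining property $s_0\big\vert_{\widetilde\Delta}=1$ recalled in \eqref{a1}, and it is what makes $\iota$ canonical.
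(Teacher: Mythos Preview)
Your proof is correct and defines $\iota$ in the same way as the paper (as the kernel of the restriction $\tbb\big\vert_{3\widetilde\Delta}\to\tbb\big\vert_{2\widetilde\Delta}$), and both treat the left square as tautological. The difference lies in how the right square is handled. The paper argues at the level of sheaves: it sets up a commutative diagram with exact rows
\[
\begin{tikzcd}
0 \arrow[r] & K_{\widetilde S}(-\widetilde\Delta-2\Sigma) \arrow[r] \arrow[d,hookrightarrow,"i"] & K_{\widetilde S}(2\widetilde\Delta-2\Sigma) \arrow[r,"r_3"] \arrow[d,equal] & K_{\widetilde S}(2\widetilde\Delta)\big\vert_{3\widetilde\Delta} \arrow[r] \arrow[d,"q"] & 0 \\
0 \arrow[r] & K_{\widetilde S}(-2\Sigma) \arrow[r,"\beta"] & K_{\widetilde S}(2\widetilde\Delta-2\Sigma) \arrow[r] & K_{\widetilde S}(2\widetilde\Delta)\big\vert_{2\widetilde\Delta} \arrow[r] & 0
\end{tikzcd}
\]
and applies the snake lemma to get $\operatorname{coker} i\cong\ker q$; the composition $\iota\circ\big\vert_{\widetilde\Delta}$ is then identified with $r_3\circ\beta$ by chasing through this isomorphism. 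Your route is more concrete: you realize $f$ as multiplication by the canonical section $\mathbf 1_{2\widetilde\Delta}$, observe that this forces $f(s)\big\vert_{2\widetilde\Delta}=0$ so that $f(s)\big\vert_{3\widetilde\Delta}\in\operatorname{Im}\iota$, and then pin down the preimage by a local coordinate calculation. Your approach is more elementary and makes the normalization (via $s_0\big\vert_{\widetilde\Delta}=1$) explicit; the paper's snake-lemma argument is coordinate-free and makes the sheaf-level naturality transparent without any local computation.
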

\begin{proof}
By Proposition \ref{prop:fromizadi}  
\begin{equation*}
\phi^\ast
(V_0(\tau) ) \subset  H^0(\widetilde{S},\, K_{\widetilde S}(-2\Sigma)).
\end{equation*}
So the diagram makes sense. Moreover the left rectangle is obviously commutative. To prove the commutativity of the right part is harder.
 Observe that the   sequence:
\begin{gather*}
  0 \lra \OO_\SS / \OO_\SS(-\Deltas)
  \lra \OO_\SS (2\Deltas) / \OO_\SS(-\Deltas)
  \lra \OO_\SS (2 \Deltas) / \OO_\SS
  \lra 0
\end{gather*}
is exact by the third isomorphism theorem.
Since $\OO_\SS = \OO_\SS (2\Deltas) ( -2\Deltas)$ 
it can be rewritten as
\begin{gather*}
  0 \lra
  \OO_{\Deltas} \lra \OO_\SS (2\Deltas) \big\vert_{3\widetilde \Delta} \lra
  \OO_\SS (2\Deltas) \big\vert_{2\widetilde \Delta} \lra 0.
\end{gather*}
Tensoring with $K_\SS$ we obtain that the sequence of sheaves on $\SS$
\begin{gather}
\label  {eq:important}
  0 \lra
  K_\SS \big \vert _{\Deltas} \stackrel{\iota}{\lra} K_\SS (2\Deltas) \big\vert_{3\widetilde \Delta} \stackrel{q}{\lra}
  K_\SS (2\Deltas) \big\vert_{2\widetilde \Delta} \lra 0
\end{gather}
is exact. The map $\iota$ in \eqref {eq:dd} is the same as here but
on global sections.
Consider the following diagram of sheaves on $\SS$:
\begin{equation*}
  \begin{tikzcd}
    0 \arrow[r] & K_{\widetilde S}(-\widetilde\Delta-2\Sigma) \arrow[r, "\alpha"] \arrow[d,hookrightarrow, "i"] & K_{\widetilde S}(2\widetilde{\Delta}-2\Sigma) \arrow[r, "r_3"] \arrow[d,equal] & K_{\widetilde S}(2\widetilde{\Delta})\big \vert_{3\Deltas} \arrow[r] \arrow[d, two heads, "q"] & 0 \\
    0 \arrow[r] & K_{\widetilde S}(-2\Sigma) \arrow[r,"\beta"] & K_{\widetilde
      S}(2\widetilde{\Delta}-2\Sigma) \arrow[r] & K_{\widetilde
      S}(2\widetilde{\Delta})
\big \vert _{2\Deltas}
 \arrow[r]
    & 0
\end{tikzcd}
\end{equation*}
The  rows are exact since  $\Sigma$ and $\widetilde \Delta$ have no intersection.
Applying the snake Lemma to this diagram we get an isomorphism $\operatorname{coker} i \,\cong \, \ker q$.
 Observe that the composition $\big \vert _{3\Deltas} \circ f$ in
\eqref{eq:dd} is the map induced on global sections by
$r_3 \circ \beta$. Similarly the composition
$\iota \circ \big \vert_{\Deltas}$ is the map on global section
induced by the composition
\begin{gather*}
  K_\SS (-2\Sigma) \lra \operatorname{coker} i \cong \ker q \lra  K_{\widetilde S}(2\widetilde{\Delta})\big \vert_{3\Deltas}.
  \end{gather*}
  It follows from the diagram of sheaves above that this  map is equal to $r_3 \circ \beta$.   
This proves that the right rectangle of  \eqref{eq:dd} commutes.
\end{proof}

The following Lemma is at the basis of the computations in the rest paper.

\begin{lemma}\label{lemma:pull}
  Fix $\zeta\,\in\, V(\tau)\,=\, H^0(P,\,2\Xi)$ with
  $\zeta(0)\,\neq\, 0$, and an isomorphism
  $$E\ : \ \phi^\ast (2\Xi)\ \xrightarrow{\,\,\,\cong\,\,\,}\
  K_{\widetilde S}(2\widetilde{\Delta}- 2\Sigma).$$ Let
  $\alpha\,\neq\, 0$ be the biresidue of $ E(\phi^*\zeta)$ on the
  diagonal, that is,
  $E(\phi^\ast\zeta)\big\vert_{\widetilde \Delta}\, =\, \alpha \in
  \C$.  Then for all $\mu\, \in\, V(\tau)$
  \begin{equation}
    \label{eq:restrictiontodelta}
    E (\phi^\ast \mu)\big\vert_{3\widetilde\Delta}\ =\ \frac{\mu(0)}{\zeta(0)}\cdot E (\phi^\ast\zeta)\big\vert_{3\widetilde\Delta}+\frac{\alpha}{2\zeta(0)}\sum_{i,j=1}^{g-1} \left(\partial_i\partial_j\mu(0)-\frac{\mu(0)}{\zeta(0)}\partial_i\partial_j\zeta(0)\right)\gamma_i\gamma_j.
  \end{equation}
\end{lemma}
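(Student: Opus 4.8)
The plan is to reduce the statement, by linearity of $\phi^\ast$ and of restriction, to two pieces. Writing
\[
\mu\ =\ \frac{\mu(0)}{\zeta(0)}\,\zeta\ +\ \nu,\qquad \nu\ :=\ \mu-\frac{\mu(0)}{\zeta(0)}\,\zeta,
\]
one has $\nu(0)=0$, so $\nu\in V_0(\tau)$. The first summand contributes $\frac{\mu(0)}{\zeta(0)}\,E(\phi^\ast\zeta)\big\vert_{3\widetilde\Delta}$ at once, so everything reduces to computing $E(\phi^\ast\nu)\big\vert_{3\widetilde\Delta}$. Here the commutative diagram \eqref{eq:dd} does the structural work: since $\nu\in V_0(\tau)$, Proposition \ref{prop:fromizadi} gives $\phi^\ast\nu\in H^0(\widetilde S,\,K_{\widetilde S}(-2\Sigma))$, which has no pole along $\widetilde\Delta$, and the right square of \eqref{eq:dd} then identifies $E(\phi^\ast\nu)\big\vert_{3\widetilde\Delta}$ with $\iota$ applied to the biresidue $E(\phi^\ast\nu)\big\vert_{\widetilde\Delta}$, i.e.\ to a quadratic differential on $\widetilde C$. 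Thus the whole problem becomes the computation of this single biresidue.

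To compute it I would exploit that $E$ is an isomorphism of line bundles, so that ratios of sections are preserved:
\[
E(\phi^\ast\nu)\ =\ \frac{\nu\circ\phi}{\zeta\circ\phi}\cdot E(\phi^\ast\zeta).
\]
Fix a local coordinate $z$ on $\widetilde C$ near a point of $\widetilde\Delta$, set $w=z-u$ for the second factor, and write $\gamma_k=g_k(z)\,dz$. By the lift \eqref{Prym-diff} of the Prym difference map, $\phi(p,q)_k=\int_w^z\gamma_k=g_k(z)\,u+O(u^2)$. Sections of $2\Xi$ are invariant under $-1_P$ (as recalled in the proof of Proposition \ref{prop:boa}), so the Taylor expansion of $\nu$ at the origin has no linear term; since moreover $\nu(0)=0$, one gets
\[
\nu\circ\phi\ =\ \frac{u^2}{2}\sum_{k,l=1}^{g-1}\partial_k\partial_l\nu(0)\,g_k g_l\ +\ O(u^3),\qquad \zeta\circ\phi\ =\ \zeta(0)+O(u^2).
\]
Because $E(\phi^\ast\zeta)$ has a double pole along $\widetilde\Delta$ whose biresidue is the constant $\alpha$ --- locally $E(\phi^\ast\zeta)=\bigl(\alpha+O(u)\bigr)u^{-2}\,dz\wedge dw$ in the trivialization normalized by $s_0$ --- the factor $u^2$ above cancels the pole, and restricting to the diagonal (where $dz\wedge dw$ restricts to the quadratic differential $(dz)^2$ and $g_kg_l(dz)^2=\gamma_k\gamma_l$) yields
\[
E(\phi^\ast\nu)\big\vert_{\widetilde\Delta}\ =\ \frac{\alpha}{2\zeta(0)}\sum_{k,l=1}^{g-1}\partial_k\partial_l\nu(0)\,\gamma_k\gamma_l.
\]

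Finally I would substitute $\partial_k\partial_l\nu(0)=\partial_k\partial_l\mu(0)-\frac{\mu(0)}{\zeta(0)}\partial_k\partial_l\zeta(0)$, feed the resulting quadratic differential through $\iota$, and add back the term $\frac{\mu(0)}{\zeta(0)}E(\phi^\ast\zeta)\big\vert_{3\widetilde\Delta}$ coming from the first summand; this gives exactly \eqref{eq:restrictiontodelta}. The main obstacle is the bookkeeping around the biresidue. One must pin down that $E(\phi^\ast\zeta)\big\vert_{\widetilde\Delta}$ is genuinely a constant (it is a global section of the canonically trivial $\tbb\big\vert_{\widetilde\Delta}\cong\mathcal O_{\widetilde C}$, hence constant since $\widetilde C$ is compact and connected), match the normalization of the double pole against the canonical section $s_0$ so that the constants $\tfrac12$ and $\zeta(0)^{-1}$ come out with the correct factors, and check that the diagonal restriction and the passage to $3\widetilde\Delta$ via $\iota$ are compatible --- which is precisely what the diagram \eqref{eq:dd} guarantees. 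The cohomological inputs are all either standard or already established in the excerpt; the delicate part is purely this local second-order computation and its normalization.
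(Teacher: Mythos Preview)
Your proposal is correct and follows essentially the same approach as the paper: reduce by linearity to $\nu\in V_0(\tau)$, form the meromorphic ratio $\nu/\zeta$ pulled back via $\phi$, expand locally using the integral formula \eqref{Prym-diff} and the vanishing of first derivatives at the origin, read off the biresidue, and invoke the diagram \eqref{eq:dd} to pass from $\widetilde\Delta$ to $3\widetilde\Delta$. The only cosmetic difference is that the paper parametrizes a transversal curve $\mathcal R=\{z_1+z_2=0\}$ to the diagonal and evaluates along it, whereas you expand directly in the normal coordinate $u=z-w$; the computations are the same.
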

Note that the isomorphism $E$ exists by Theorem
\ref{prop:fundamentalpullback}.  Here
$H^0(\widetilde\Delta,\, K_{\widetilde S}\big\vert_{\widetilde
  \Delta})\,\cong\, H^0(\widetilde{C},\,2K_{\widetilde C})$ is
identified with a subspace of
$H^0(3\widetilde{\Delta},\, K_{\widetilde S}(2\widetilde
\Delta)\big\vert_{3\widetilde\Delta})$ through the morphism on global
sections induced by the map $\iota$ in \eqref{eq:important}.  See
\eqref{nega} for $\gamma_i$. The partial derivatives
$\partial_i$
are taken with respect to the coordinates $t_i$ defined in \eqref{ecp1}.
\begin{proof}
  The main point is proving \eqref{eq:restrictiontodelta} for 
  $\mu \in V_0(\tau)$. Fix such a $\mu$. We have to prove that
\begin{equation}
\label{eq:restrictiontov0}
E (\phi^\ast \mu)\big\vert_{3\widetilde\Delta}\ =\ \frac{\alpha}{2\zeta(0)}\sum_{i,j=1}^{g-1}\partial_i\partial_j\mu(0)\gamma_i\gamma_j.
\end{equation}
Set
\begin{gather*}
  f:= \frac{\mu}{\zeta}.
\end{gather*}
Since $\mu$ and $\zeta$ belong to $V(\tau)$, $f$ is  a meromorphic function on $P$.
Since $\zeta(0)\neq 0$,  $f$ is holomorphic near the origin. Since $\mu(0) = \partial_i\mu(0)=0$, we have
\begin{gather}
f(0) = \partial_if(0) =0, \qquad  \partial_i\partial_jf(0) = \frac{\partial_i\partial_j\mu(0) }{\zeta(0)},
\qquad   f(t) = \frac{1}{2} \partial_i\partial_j\mu(0) t_i t_j + o(t^2).
\label{fij}
\end{gather}
Define the map
 \begin{equation}\label{edpsi}
   \psi\ :=\ E \circ \phi^*\ :\ V(\tau)\ \longrightarrow\
 H^0(\widetilde{S}, \,K_{\widetilde S}(2\widetilde \Delta-2\Sigma)).
 \end{equation}
Then
\begin{gather}
  \label{phistarf}
  \frac{\psi(\mu)}{\psi(\zeta)} = \frac{ \phi^* (\mu)}{\phi^*(\zeta)} = \phi^* f.
\end{gather}
Fix a point $p\,\in \,C$ together with a holomorphic coordinate $z\,:\,U 
\,\longrightarrow\, \mathbb{C}$ such that $p\,\in\, U$ and $z(p)\,=\, 0$. This produces the coordinates $z_1 \,:=\, p_1^\ast z$ and $z_2 \,:=\, p_2^\ast z$ on $U\times U$.
We consider instead the coordinates $x\,=\,z_1+ z_2$ and $y\,=\,z_1-z_2$, which highlight the diagonal. 
Let $\mathcal R$ denote the locally defined curve in $\widetilde{S}$ given by the equation $x\,=\, 0$; so $y$ is a coordinate function on $\mathcal R$, and ${\mathcal R}\cap \widetilde\Delta\,=\, (p,\,p)$. The inclusion map $e\,:\,{\mathcal R}\,\hookrightarrow\, U\times U\,\hookrightarrow\, C\times C$ is given in coordinates by $$e(y)= \ (y/2,\,-y/2).$$
Let $\gamma_i\,=\,h_i dz$ on $U$. Identify $P$ with $\mathbb{C}^{g-1}/(\mathbb{Z}^{g-1}+\tau \mathbb{Z}^{g-1})$ as  in \eqref{eq:prymtorus}. Using \eqref{Prym-diff} 
and  the mean value theorem
we get
\begin{gather*}
  \phi\circ e (y)=\left(\int_{-y/2}^{y/2} \gamma_i\right)_{i=1,\cdots ,g-1} =  h(0)y+o(y),
\end{gather*}
where $h$ is the vector function with components $h_i$.
Hence
\begin{gather}
  \label{phistarf2}
  (\phi^*f ) (e(y)) = 
  f \circ \phi \circ e (y) =
  f (h(0)y + o(y)) = \frac{1}{2} \partial_i\partial_jf(0) \, h_i(0)h_j(0) \, y^2 + o(y^2).  
\end{gather}

Now,  
\begin{gather*}
  \sigma := \frac{dz_1\wedge dz_2}{(z_1-z_2)^2}
\end{gather*}
is a holomorphic
frame for the bundle $\bb(-2\Sigma)$ on $U \times U$.
Locally, $\psi(\zeta)$ may be written as
$$\psi(\zeta) (z_1,z_2) \ =\ (\alpha+(z_1-z_2)^2g(z_1,z_2)) \cdot \sigma$$
for some $g \,\in\, \mathcal O(U\times U)$.
Notice that there is no term in $(z_1-z_2)$ for residue reasons.
By (1) of \ref{prop:fromizadi}, $\psi(\mu)$ may be written as
$$ \psi(\mu)  (z_1,z_2)=\ (z_1-z_2)^2 q(z_1,z_2)\cdot \sigma$$
for some $q \,\in\, \mathcal O(U\times U)$.
Using \eqref{phistarf} it follows that
\begin{gather*}
  (\phi^*f ) (e(y)) = 
  \frac{\psi(\mu)} {\psi(\zeta)} ( e(y) ) = \frac
  { y^2 \ q(y/2, -y/2) } { \alpha + y^2 \ g(y/2,-y/2) }  = \frac{q(0,0)}{\alpha} \cdot y^2 + o(y^2).  
\end{gather*}
Comparing this with  \eqref{phistarf2}  
we get
\begin{gather*}
  \frac{q(0,0)}{\alpha} =
  \frac{1}{2} \partial_i\partial_j f(0) h_i(0)h_j(0) .
\end{gather*}
Using \eqref{fij} we obtain
\begin{gather*}
q(0,0)
   =
  \frac{\alpha}{2\zeta(0)} 
    \partial_i\partial_j\mu(0) h_i(0)h_j(0)  .
\end{gather*}
Multiplying by $dz_1\wedge dz_2$ 
this identity gives the following:
\begin{gather*}
\psi (\mu)|_{\widetilde\Delta}
\ =\   \left.\left(
  \frac{\alpha}{2\zeta(0)} \partial_i\partial_j\mu(0) 
  p_1^\ast \gamma_i\wedge p_2^\ast \gamma_j\right)\right|_{\widetilde \Delta}\ \ \, \text{at }(p,\,p)\,\in\,\widetilde\Delta ,  
\end{gather*}
where $|_{\widetilde\Delta}$ is the restriction in \eqref{eq:dd}. 
Since the point $p$ was chosen arbitrarily at the beginning of the computation,  this proves that
$$\left.\left(\frac{\alpha}{2 \zeta(0)}\sum_{i,j=1}^{g-1} \partial_i\partial_j \mu(0)p_1^\ast \gamma_i\wedge p_2^\ast \gamma_j\right)\right|_{\widetilde \Delta}\ =\ (E\phi^\ast \mu)|_{\widetilde\Delta}$$
for all $\mu\,\in \,V_0(\tau)$.

Comparing with \eqref{eq:dd} and using the isomorphism $H^0(\widetilde{\Delta},\, K_{\widetilde S}\big\vert_{\widetilde \Delta})\,\cong\, H^0(\widetilde{C}, \,2K_{\widetilde C})$, the proof of \eqref{eq:restrictiontov0}
is completed.

Take now any $\mu\,\in\, V(\tau)$. We have
$$\left .  E \phi^\ast \mu \right|_{3\widetilde \Delta}\ =\ \left . E \phi^\ast \left (\frac{\mu(0)}{\zeta(0)}\zeta\right)\right|_{3\widetilde \Delta}+\left .E\phi^\ast\left(\mu-\frac{\mu(0)}{\zeta(0)}\zeta\right)\right|_{3\widetilde \Delta}$$
and $\mu-\frac{\mu(0)}{\zeta(0)}\zeta\,\in\, V_0(\tau)$, so \eqref{eq:restrictiontodelta} follows from \eqref{eq:restrictiontov0}.
\end{proof}

\subsection{Theta functions and computation of \texorpdfstring{$\debar$}{}-derivative}
\label{subs:theta:debar}

We now proceed to compute $\debar\beta^P$, the derivative of the Prym projective structure constructed in \eqref{eq:defbetap}. First, some notation will be set up. Denote by $\mathbb{Z}_2$ the cyclic group $\mathbb{Z}/2\mathbb{Z}$ of order $2$.
For $u\,\in\, \mathbb{Z}_2^g$ and $\tau\,\in\,\mathbb{H}_g$, define
\begin{equation}
\label{eq:thetau}
\theta_u(\tau;\,z)\ := \ \sum_{m\in\mathbb{Z}^g} \exp({2\pi \sqrt{-1} (m+u/2)^t\tau (m+u/2)+4\pi \sqrt{-1} (m+u/2)^tz}).
\end{equation}
where, on the right--hand side, we substitute $u$ with a representative in $\mathbb{Z}^g$. It is easy to see that the right--hand side of \eqref{eq:thetau} does not depend on the representative of $u$.
Then $$\{\theta_u(\tau;z)\,\, \big\vert\,\, u\,\in\, \mathbb{Z}_2^g\}$$
is a basis of $V(\tau)$, see e.g. \cite[\S 4.1.2]{VG}. In addition to that, it is orthogonal of constant length with respect to the Hermitian structure on $V(\tau)$; see \cite[p.~80]{IG}.
Define
\begin{equation}\label{Thg}
\Theta_g\ :\ \mathbb{H}_g\ \lra\ \mathbb{P}^{2^g-1}, \ \ \, \Theta_g(\tau):=\  [\theta_u(\tau;\,0)]_{u\in \mathbb{Z}_2^g}.
\end{equation}
Given a projective space
 $\mathbb{P}^N$ with a fixed basis we consider the Fubini--Study metric:
\begin{equation}
\label{defFS}
\omega_{FS}\ =\ \frac{\sqrt{-1}}{2}\partial \debar \log |X|^2,
\end{equation}
where $|X|^2\,=\,\sum_{i=0}^N |X_i|^2$ and $X_0,\, \cdots ,\, X_N$ are the homogeneous coordinates.

\begin{theorem}\label{teo:debar}
Identify $f_\ast K_{\mathcal{C}/B}^{\otimes 2}$ with $\Omega^1_{\widetilde M_g}$ through the isomorphism $\lambda$ in \eqref{eq:lambdaiso}. 
Let $\tau$ be the map defined in \eqref{eq:prymperiod},
and let $\beta^P$ be  the section constructed in \eqref{eq:defbetap}.
Then over $\widetilde M_g$ we have the following identity:
$$\debar \beta^P\ =\ 8 \pi \tau^\ast \Theta_{g-1}^\ast \omega_{FS}.$$
\end{theorem}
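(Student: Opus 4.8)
The plan is to isolate, inside the construction of $\beta^P$, the single ingredient that depends anti-holomorphically on the moduli point; everything holomorphic is killed by $\debar$. Over $\widetilde M_g$ the relative framework of \S\ref{ss:projmarked}, together with the global isomorphism \eqref{eq:globaliso}, provides a holomorphic family of isomorphisms $E$ realizing Theorem \ref{prop:fundamentalpullback} fibrewise, and one may also fix a holomorphic family $\zeta=\zeta(b)\in V(\tau(b))$ with $\zeta(0)\neq 0$ (e.g.\ a single second order theta function $\theta_{u_0}$ with $\theta_{u_0}(0)\neq 0$). The only non-holomorphic object is the generator $s$ of the line $\mathscr L$ in \eqref{eq:defines}: since the basis $\{\theta_u\}_{u\in\mathbb Z_2^{g-1}}$ of $V(\tau)$ is orthogonal of constant length (\cite[p.~80]{IG}), the Riesz representative of the evaluation functional $f\mapsto f(0)$ is, up to a positive scalar,
$$s\ =\ \sum_{u\in\mathbb Z_2^{g-1}}\overline{\theta_u(\tau;0)}\ \theta_u(\tau;\cdot),$$
and the complex conjugation here is exactly what produces a nonzero $\debar$-derivative. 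Note that $\tfrac{\partial_i\partial_j s(0)}{s(0)}$ is invariant under rescaling $s$, so this representative is legitimate.

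First I would apply Lemma \ref{lemma:pull} with reference $\zeta$ and with $\mu=s$. By Proposition \ref{prop:fromizadi}(1) sections coming from $V_0(\tau)$ contribute no biresidue along $\widetilde\Delta$, so the biresidue of $E(\phi^\ast s)$ equals $\tfrac{s(0)}{\zeta(0)}\alpha_\zeta$, where $\alpha_\zeta=E(\phi^\ast\zeta)\big\vert_{\widetilde\Delta}$. Dividing \eqref{eq:restrictiontodelta} by this biresidue, the normalized section $\beta^P(b)=\tfrac{1}{\alpha_\zeta(s(0)/\zeta(0))}E(\phi^\ast s)\big\vert_{3\widetilde\Delta}$ splits as $\beta_0+\big(\beta^P-\beta_0\big)$, where $\beta_0(b):=\tfrac{1}{\alpha_\zeta}E(\phi^\ast\zeta)\big\vert_{3\widetilde\Delta}$ is a global holomorphic section of $\mathcal V_{s_0}$ and
$$\beta^P-\beta_0\ =\ \frac{1}{2s(0)}\sum_{i,j}\Big(\partial_i\partial_j s(0)-\tfrac{s(0)}{\zeta(0)}\partial_i\partial_j\zeta(0)\Big)\gamma_i\gamma_j\ \in\ \ker\Psi=f_\ast K^{\otimes 2}_{\mathcal C/\widetilde M_g}.$$
Under $\lambda$ and Proposition \ref{prop:codifferential} (which gives $\lambda(\gamma_i\gamma_j)=-\tau^\ast dZ_{ij}$) this becomes an explicit $(1,0)$-form on $\widetilde M_g$. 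Since $\beta_0$ and the $\zeta$-summand are holomorphic, they die under $\debar$, and only the $s$-summand survives.

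Then I would run the theta calculus. From the explicit $s$ one reads off $s(0)=\sum_u|\theta_u(\tau;0)|^2=:\lVert\vartheta\rVert^2$ and $\partial_i\partial_j s(0)=\sum_u\overline{\theta_u(\tau;0)}\,\partial_{z_i}\partial_{z_j}\theta_u(\tau;0)$. The heat equation for the functions in \eqref{eq:thetau}, namely $\partial_{z_i}\partial_{z_j}\theta_u=4\pi\sqrt{-1}\,(1+\delta_{ij})\,\partial_{\tau_{ij}}\theta_u$, converts the $z$-derivatives into $\tau$-derivatives of the Thetanullwerte, so that $\tfrac{\partial_i\partial_j s(0)}{s(0)}=4\pi\sqrt{-1}\,(1+\delta_{ij})\,G_{ij}$ with $G_{ij}=\partial_{\tau_{ij}}\log\lVert\vartheta\rVert^2$. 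Passing from the double sum to the sum over $i\le j$ introduces $\epsilon_{ij}=2-\delta_{ij}$, and the crucial cancellation $\epsilon_{ij}(1+\delta_{ij})=2$ (uniform in whether $i=j$) collapses the prefactors, yielding
$$\debar\beta^P\ =\ -4\pi\sqrt{-1}\sum_{i\le j}\debar(\tau^\ast G_{ij})\wedge\tau^\ast dZ_{ij}\ =\ 4\pi\sqrt{-1}\,\tau^\ast\Big(\textstyle\sum_{i\le j}dZ_{ij}\wedge\debar G_{ij}\Big).$$
Recognizing $\sum_{i\le j}dZ_{ij}\wedge\debar G_{ij}=\partial\debar\log\lVert\vartheta\rVert^2$ and comparing with \eqref{defFS}, which gives $\partial\debar\log\lVert\vartheta\rVert^2=-2\sqrt{-1}\,\Theta_{g-1}^\ast\omega_{FS}$, one obtains $\debar\beta^P=4\pi\sqrt{-1}\cdot(-2\sqrt{-1})\,\tau^\ast\Theta_{g-1}^\ast\omega_{FS}=8\pi\,\tau^\ast\Theta_{g-1}^\ast\omega_{FS}$.

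The main obstacle is not conceptual but one of bookkeeping: pinning down the generator $s$ via the orthogonality of the theta basis, ensuring the reference $\beta_0$ is genuinely global and holomorphic (this is precisely where \eqref{eq:globaliso} and the holomorphic Prym differentials $\gamma_i$ of \eqref{nega} are indispensable), and tracking the several factors of $2$, $\pi$ and $\sqrt{-1}$ coming from the heat equation, the symmetric-sum convention $\epsilon_{ij}$, and the Fubini--Study normalization so that they combine into the clean constant $8\pi$. Lemma \ref{lemma:pull}, which already performs the hard geometric step of expressing the restriction of the pullback in terms of value and second derivatives at the origin, is what makes this entire reduction possible.
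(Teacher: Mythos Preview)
Your proposal is correct and follows essentially the same route as the paper's proof: apply Lemma \ref{lemma:pull} with a holomorphic reference $\zeta=\theta_{u_0}$, take the explicit Riesz representative $s=\sum_u\overline{\theta_u(\tau;0)}\,\theta_u(\tau;\cdot)$ for the line $\mathscr L$, kill the holomorphic pieces with $\debar$, convert via Proposition \ref{prop:codifferential} and the heat equation, and collapse the constants to $8\pi$. One small caveat: the reference section $\beta_0$ is only defined on the open locus where $\theta_{u_0}(\tau(b);0)\neq 0$ (and where the twist $L$ in \eqref{eq:globaliso} is trivialized), so it is local rather than genuinely global --- the paper makes this explicit by restricting to a polydisk $B$ --- but since $\debar\beta^P$ is computed locally this does not affect the argument.
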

\begin{proof}
Fix a point $b_0\,\in\,\widetilde M_g$ and a  neighborhood $B$ of $b_0$ in $
\widetilde{M}_g$ which is biholomorphic to a polydisk. Then the line bundle $L$ in \eqref{eq:globaliso} is trivial when restricted to $B$. Hence we can fix an isomorphism
\begin{gather*} E\ :\ \phi^\ast 2 \Theta\big\vert_{\widetilde f^{-1}(B)}\ \xrightarrow{\,\,\,\cong\,\,\,} \ \left.\widetilde{\mathbf{p}}_1^\ast K_{\widetilde{\mathcal{C}}/\widetilde M_g}\otimes \widetilde{\mathbf{p}}_2^\ast K_{\widetilde{\mathcal{C}}/\widetilde M_g}\otimes \mathcal{O}_{\widetilde{\mathcal{S}}}(2\widetilde\Delta_{\widetilde M_g}-2\Sigma_{\widetilde{M}_g})\,\right|_{\widetilde f^{-1}(B)}.
\end{gather*}
Choose an element $u_0\,\in\, \mathbb{Z}_2^{g-1}$ such that $\theta_{u_0}(\tau(b_0);\,0)
\,\neq \,0$. 
This is possible because $\{\theta_u\}_{u\in\mathbb{Z}_2^{g-1}}$ is a basis of $V(\tau)\,\cong\, H^0(P,\,2\Xi)$ and $|2\Xi|$ is base point-free.
Shrinking $B$ --- if necessary --- we can assume that $\theta_{u_0}(b;\,0)\,\neq\, 0$ for all $b\,\in \,B$. 
The restriction to the fibre over $b$ yields an isomorphism
$$E_b\ :\ \phi_b^\ast 2 \Theta_b\ \longrightarrow\
K_{\widetilde S_b}(2\widetilde \Delta_b-2\Sigma_b).$$
Define
$$\alpha(b) \ :=\ (E_b \phi_b^\ast \theta_{u_0}(\tau(b),\,\cdot))\big\vert_{\widetilde \Delta_b}.$$
Let $s_b$ be any element belonging to the line $\mathscr L \subset
\,\in\, H^0(X_{\tau(b)},\, 2\Theta_{\tau(b)})$   defined in \eqref{eq:defines} and depending smoothly on $b$.
Then we get the following expression for the section $\beta^P$ of $\mathcal V$ that was defined in \eqref{eq:defbetap}: 
$$\beta^P(b)\ =\ \frac{1}{(E_b\phi_b^\ast s_b)\big\vert_{\widetilde{\Delta}_b}} (E_b\phi_b^\ast s_b)\big\vert_{3\widetilde{\Delta}_b}.$$
Applying Lemma \ref{lemma:pull} with $\zeta = \theta_{u_0}(\tau(b);0)$ we obtain
\begin{align*}
(E_b \phi_b^\ast s_b)\big\vert_{3\widetilde\Delta_b}\ &=\ \frac{s_b(0)}{\theta_{u_0}(\tau(b);\,0)}(E_b \phi_b^\ast \theta_{u_0}(\tau(b),\,\cdot))\big\vert_{3\widetilde\Delta_b}+\\
&+\frac{\alpha(b)}{2 \theta_{u_0}(\tau(b);0)}\sum_{i,j=1}^{g-1} \left(\partial_i\partial_j s_b(0)-\frac{s_b(0)}{\theta_{u_0}(\tau(b);\,0)}\partial_i\partial_j\theta_{u_0}(\tau(b);\,0)\right)\gamma_i(b)\gamma_j(b),\\
(E_b\phi_b^\ast s_b)\big\vert_{\widetilde \Delta_b} \ &=\ \frac{s_b(0)}{\theta_{u_0}(\tau(b);\,0)}(E_b \phi_b^\ast \theta_{u_0}(\tau(b),\,\cdot))\big\vert_{\widetilde\Delta_b} \ =\ \frac{\alpha(b) s_b(0)}{\theta_{u_0}(\tau(b);0)},\\
\beta^P(b) \ &=\ \frac{1}{(E_b\phi_b^\ast s_b)\big\vert_{\widetilde{\Delta}_b}} (E_b\phi_b^\ast s_b)\big\vert_{3\widetilde{\Delta}_b} =\\
&=\frac{1}{\alpha(b)} (E_b \phi_b^\ast \theta_{u_0}(\tau(b);\,\cdot))\big\vert_{3\widetilde\Delta_b}+\\
&+\frac{1}{2}\sum_{i,j=1}^{g-1} \left(\frac{\partial_i\partial_j s_b(0)}{s_b(0)} -\frac{1}{\theta_{u_0}(\tau(b);0)} \partial_i\partial_j\theta_{u_0}(\tau(b);\,0)\right)\gamma_i(b)\gamma_j(b). 
\end{align*}
Since $\theta_{u_0}$ and the $\gamma_i$'s vary holomorphically with $b$, the terms
\begin{gather*}
    \frac{1}{\alpha(b)} (E_b \phi_b^\ast \theta_{u_0}(\tau(b);\,\cdot ))\big\vert_{3\Delta}\qquad \text{and } \qquad \frac{1}{\theta_{u_0}(\tau(b);\,0)} \partial_i\partial_j\theta_{u_0}(\tau(b);\,0)
\end{gather*}
represent holomorphic sections of the  vector bundle $\mathcal V $ defined in \eqref{defV}.
Consequently, if we identify $\tau^\ast d Z_{ij}\, =\,\lambda(-\gamma_i\gamma_j)$ with $-\gamma_i\gamma_j$ (see \eqref{eq:lambdaiso} and Proposition \ref{prop:codifferential}) we get 
\begin{equation}
\label{eq:protodebar}
\debar \beta^P(b)\ =\ -\debar\biggl ( \frac{1}{2\, s_b(0)} \sum_{i,j=1}^{g-1}
\partial_i\partial_j s_b(0)\tau^\ast d Z_{ij} \biggr ).
\end{equation}
Since $\{\theta_u(\tau(b),\,\cdot)\}_{u\in U}$ is orthogonal of constant length, a possible choice for $s_b$ is the following:
\begin{gather} 
\label{defsb}
s_{b}(t)\ =\ \sum_{u\in\mathbb{Z}_2^{g-1}} \overline{\theta_u(\tau(b);\,0)}
\theta_u(\tau(b);\,t).
\end{gather}
In fact this section is smooth in $b$ and is orthogonal to the sections vanishing at $0$, by the same simple argument used in \cite[Lemma 6.4]{BGV}.
We will complete the proof by substituting \eqref{defsb} into \eqref{eq:protodebar}. The last fact we need is the heat equation for second order theta functions:
$$4\pi\sqrt{-1} (1+\delta_{ij}) \frac{\partial \theta_u(Z;\,t)}{\partial Z_{ij}}
\ =\ \frac{\partial^2 \theta_u(Z;\, t)}{\partial t_i\partial t_j},$$
where $\{Z_{ij}\}_{1\leq i\leq j\leq g-1}$ are the coordinates on $\mathbb{H}_{g-1}$ and $t_i$ are the coordinates on $\mathbb{C}^{g-1}$; see \cite[p.~20]{VG}. For simplicity of notation we will drop $\tau(b)$ when it is not important. We get
\begin{align*}
s_b(0) &= 
\sum_u|\theta_u(0)|^2, \\
\partial_i\partial_j s_b(0) & = 
\sum_u \overline{\theta_u(0)}\, \partial_i \partial_j\theta_u(0),\\
\sum_{i,j=1}^{g-1} 
\partial_i\partial_j s_b(0)\tau^\ast d Z_{ij}\ &=\ 
\sum_{i,j=1}^{g-1} 
\sum_u\overline{\theta_u(0)}  \frac{\partial^2  \theta_u(0)}{\partial t_i \partial t_j} \tau^\ast d Z_{ij}\\
&=\ 8\pi \sqrt{-1}\sum_u
\sum_{i=1}^{g-1}
\overline{\theta_u(0)}\frac{\partial \theta_u(0)}{\partial Z_{ii}}\tau^\ast d Z_{ii}\, + \\
&+8\pi \sqrt{-1}\sum_u\sum_{1\leq i<j\leq g-1} \overline{\theta_u(0)} \frac{\partial \theta_u(0)}{\partial Z_{ij}}\tau^\ast d Z_{ij}
\\
&=\ 8\pi \sqrt{-1}\sum_u\sum_{1\leq i\leq j\leq g-1}  \frac{\partial}{\partial Z_{ij}} \big\vert\theta_u(\tau(b);0)\big\vert^2 \tau^\ast d Z_{ij}
\end{align*}
Now $|\theta_u(\cdot;0)|^2$ is a function on $\mathbb H_{g-1}$,  so
\begin{align*}
    \sum_{1\leq i\leq j\leq g-1}  \frac{\partial}{\partial Z_{ij}} \big\vert\theta_u(\cdot ;0)\big\vert^2 
    d Z_{ij} & = \partial \, |\theta_u(\cdot;0)|^2,\\
    \sum_{1\leq i\leq j\leq g-1}  \frac{\partial}{\partial Z_{ij}} \big\vert\theta_u(\tau(b);0)\big\vert^2 \tau^\ast d Z_{ij} &= \tau^* \partial \, |\theta_u( \cdot;0)|^2  \\
\sum_{i,j=1}^{g-1} 
\partial_i\partial_j s_b(0)\tau^\ast d Z_{ij}\ &=\ 
    8 \pi \sqrt{-1}  \  \tau^* \partial \left ( \sum_u|\theta_u( \cdot;0)|^2 \right ) 
\\
\frac{1}{2\, s_b(0)} \sum_{i,j=1}^{g-1}
\partial_i\partial_j s_b(0)\tau^\ast d Z_{ij} &=
\frac{4\pi \sqrt{-1}}{ \sum_u |\theta_u(\tau(b);0)|^2 }
\tau^* \partial  \left ( \sum_u|\theta_u( \cdot;0)|^2 \right ) \\& =
4 \pi \sqrt{-1} 
\tau^* \partial  \log \left ( \sum_u|\theta_u( \cdot;0)|^2 \right ) .
\end{align*}

Plugging this in \eqref{eq:protodebar} and recalling \eqref{defFS} yields 

\begin{align*}
\debar \beta^P\ &=\ 4\pi \sqrt{-1}\, \partial  \debar  \log 
\left ( \sum_u|\theta_u( \cdot;0)|^2 \right ) =\\
&=
\ 4\pi \sqrt{-1}\, \partial  \debar \,\Theta_{g-1}^\ast  
\left ( \sum_u |X_u|^2 \right)= 
\\
&=
\ 8\pi \, \tau^\ast \Theta_{g-1}^\ast \omega_{FS}.    
\end{align*}
This completes the proof.
\end{proof}

\begin{remark}
\label{rem:debarwithpg}
The map $\Theta_{g-1}:\mathbb{H}_{g-1}\longrightarrow \mathbb{P}^{2^{g-1}-1}$ in \eqref{Thg} does not factor through the quotient $\mathbb{H}_{g-1}\longrightarrow A_{g-1}$.
It does factor through the finite cover $A_{g-1}(2,4)$ of $A_{g-1}$, see \cite[p.28]{VG}.
However, the form $\Theta_{g-1}^\ast \omega_{FS}$ descends to $A_{g-1}$, see \cite[Section 3]{BV}.
So  using the diagram in \eqref{RPg} the result of Theorem \ref{teo:debar} may be rewritten as the following identity on $R_g$:
\begin{equation}
 \debar \beta^P \ =\ 8\pi P_g^\ast \Theta_{g-1}^\ast \omega_{FS}
\end{equation}
\end{remark}

\section{The Schottky-Jung identity and Question \ref{question:one} }

In this Section we start by fixing some notation  for theta functions and Thetanullwert maps. 
Next we recall the classical Schottky-Jung identity and we use it  to obtain a new expression for $\debar \beta^P$.
This new expression gives $\debar\beta^P$ in terms of the period matrix of the curve $C$, i.e., the base of the covering; see Corollary \ref{cor:equivalentdebar}. 
Relying on this expression, we are able to apply a degeneration argument that shows that $\debar \beta^P$  is not the pullback of a $(1,1)$-form
on $M_g$. 
This implies that also $\beta^P$ is truly an object on $R_g$, answering Question \ref{question:one}.

\subsection{The Schottky-Jung identity and the \texorpdfstring{$\debar$}{}-derivative}

For $\varepsilon,\delta\in \mathbb Z_2^g$ set
\begin{equation*}
\varepsilon\cdot\delta\, :\,=\,\sum_{i=1}^g \varepsilon_i\delta_i.
\end{equation*}
Let $\tau\,\in\, \mathbb{H}_{g-1}$ and $\Pi\,\in\, \mathbb{H}_g$. For every $(\varepsilon,\, \varepsilon')\,\in\, \mathbb{Z}_2^{g-1}\times\mathbb{Z}_2^{g-1}$ define the theta function
\begin{equation}
\label{eq:thetachar}
\theta\begin{bmatrix}
 \varepsilon\\
 \varepsilon'
\end{bmatrix}(\tau;z)\ :=\ \sum_{m\in \mathbb{Z}^{g-1}} \exp({\pi\sqrt{-1} (n+\varepsilon/2)^t\, \tau \, (n+\varepsilon/2)+2\pi\sqrt{-1} (n+\varepsilon/2)^t(z+\varepsilon'/2)}).
\end{equation}
This is the same notation as in \cite{GSM}, while it is different from the one in \cite{BV}, where 
this function is denoted by $\theta\begin{bmatrix}
 \varepsilon/2\\
 \varepsilon'/2
\end{bmatrix}.$
This is not exactly a rigorous definition because the member on the right--hand side depends on the representatives of $\varepsilon$ and $\varepsilon'$ in $\mathbb{Z}^{g-1}$ up to a sign. This will not be a problem for our purposes. It is easy to see from \eqref{eq:thetachar} that 
\begin{equation}
\label{eq:thetaparity}
\theta\begin{bmatrix}
 \varepsilon\\
 \varepsilon'
\end{bmatrix}(\tau;\,-z)\ =\ (-1)^{\varepsilon\cdot \varepsilon'}\theta\begin{bmatrix}
 \varepsilon\\
 \varepsilon'
\end{bmatrix}(\tau;\,z).
\end{equation}

We will also need the following theta functions:

\begin{equation}
\theta\begin{bmatrix}
 0 & \varepsilon \\
 0 & \varepsilon'\
\end{bmatrix}(\Pi;\,z)\ :=\ \theta\begin{bmatrix}
 (0,\varepsilon_1,\cdots ,\varepsilon_{g-1})\\
 (0,\varepsilon_1',\cdots ,\varepsilon_{g-1}')
\end{bmatrix} (\Pi;\,z)
\end{equation}

\begin{equation}
\theta\begin{bmatrix}
 0 & \varepsilon \\
 1 & \varepsilon'\
\end{bmatrix}(\Pi;\,z)\ :=\ \theta\begin{bmatrix}
 (0,\varepsilon_1,\cdots ,\varepsilon_{g-1})\\
 (1,\varepsilon_1',\cdots ,\varepsilon_{g-1}')
\end{bmatrix} (\Pi;\,z).
\end{equation}

Again, these functions are well defined up to a sign. However, it is easy to prove by inspecting \eqref{eq:thetachar} that the following products
$$\theta^2\begin{bmatrix}
 \varepsilon\\
 \varepsilon'
\end{bmatrix}(\tau;\,z),\ \ \, 
\theta\begin{bmatrix}
 0 & \varepsilon \\
 0 & \varepsilon'\
\end{bmatrix}(\Pi;\,z)\theta\begin{bmatrix}
 0 & \varepsilon \\
 1 & \varepsilon'\
\end{bmatrix}(\Pi;\,z)$$
are well defined and are actually independent of the representatives of $\varepsilon,\, \varepsilon'$ chosen in the right--hand side of \eqref{eq:thetachar}.

Using these theta functions one can define Thetanullwert maps to the same projective space. Namely denote by $E_{g-1}$ the set of even $g-1$ characteristics, i.e.,
$$E_{g-1}\ :=\ \left\{(\varepsilon,\, \varepsilon')\,\in\, \mathbb{Z}_2^{g-1}\times
\mathbb{Z}_2^{g-1}\,\, \big\vert\,\, \sum \varepsilon_i\varepsilon'_i\,\equiv\, 0\, \mod\, 2\right\}.$$
Let $N + 1 $ be the number of such characteristics, so we have  
$$
N + 1 \,=\, \frac{4^{g-1}+2^{g-1}}{2}.
$$
Then we define
\begin{align}
\label{b1}
&\theta_{g-1}^2\ :\ \mathbb{H}_{g-1}\ \longrightarrow\ \mathbb{P}^N, \ \ \, \qquad
\theta_{g-1}^2(\tau):=\ \left[\theta^2\begin{bmatrix}
 \varepsilon\\
 \varepsilon'
\end{bmatrix}(\tau;\,0)\right]_{(\varepsilon,\varepsilon')\in E_{g-1}}
\\
\label{b2}
&\Theta_{g}^{SJ}\ :\ \mathbb{H}_{g} \ \longrightarrow\
\mathbb{P}^N, \ \ \, \qquad 
\Theta_g^{SJ}(\Pi):= \left[\theta\begin{bmatrix}
 0 & \varepsilon \\
 0 & \varepsilon'\
\end{bmatrix}(\Pi;0)\theta\begin{bmatrix}
 0 & \varepsilon \\
 1 & \varepsilon'\
\end{bmatrix}(\Pi;0)\right]_{(\varepsilon,\varepsilon')\in E_{g-1}}.
\end{align}

It is known that these maps are well defined, that is, the thetanulls that describe them do not all vanish simultaneously. We will explicitly prove this in Corollary \ref{cor:welldef}. 

The Schottky--Jung identities link the maps $\theta^2_{g-1}$ and $\Theta^{SJ}_{g}$.
We will use this classical result as stated in \cite[Theorem 1]{FR}.

\begin{theorem}[{Schottky--Jung}]\label{TSJ}
Denote by $\Pi\,:\,\widetilde{M}_g\, \longrightarrow\, \mathbb{H}_g$ the period map (see \eqref{eq:periodmatrix}), and denote by $\tau\,:\,\widetilde{M}_g\,\longrightarrow\,
\mathbb{H}_{g-1}$ the Prym period map (see \eqref{eq:prymperiod}).
Then the following diagram is commutative:
$$\begin{tikzcd}
\widetilde M_g \arrow[r, "\Pi"] \arrow[d, "\tau"'] & \mathbb{H}_g \arrow[d, "\Theta_g^{SJ}"] \\
\mathbb{H}_{g-1} \arrow[r, "\theta^2_{g-1}"'] & \mathbb{P}^N 
\end{tikzcd}
$$
where $\theta_{g-1}^2$ and $\Theta_g^{SJ}$ are constructed in \eqref{b1} and \eqref{b2}
respectively.
\end{theorem}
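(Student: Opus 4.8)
This is the classical Schottky--Jung proportionality, and the plan is to reprove it by an explicit theta--series computation in the bases adapted to the cover in \eqref{dabi}. First I would write the period matrix $\widetilde\Pi\in\mathbb{H}_{2g-1}$ of $\widetilde C$ (in the symplectic basis $\{\widehat a_i,\widehat b_i\}$) in terms of the period matrix $\Pi\in\mathbb{H}_g$ of $C$ and the Prym matrix $\tau\in\mathbb{H}_{g-1}$. Using $\pi_\ast\widehat b_1=2b_1$ and $\pi_\ast\widehat b_i=\pi_\ast\widehat b_{i+g-1}=b_i$, the relations $\widehat\omega_i=\tfrac12(\pi^\ast\omega_i+\gamma_{i-1})$ and $\widehat\omega_{i+g-1}=\tfrac12(\pi^\ast\omega_i-\gamma_{i-1})$, and the fact that $\sigma$--anti-invariance of the $\gamma_j$ forces $\int_{\widehat b_1}\gamma_j=0$ and $\int_{\widehat b_{i+g-1}}\gamma_j=-\tau_{i-1,j-1}$, one obtains the block form
$$\widetilde\Pi\ =\ \begin{pmatrix} 2\Pi_{11} & \rho^t & \rho^t\\ \rho & \tfrac12(\Pi'+\tau) & \tfrac12(\Pi'-\tau)\\ \rho & \tfrac12(\Pi'-\tau) & \tfrac12(\Pi'+\tau)\end{pmatrix},\qquad \Pi'=(\Pi_{ij})_{2\le i,j\le g},\ \ \rho=(\Pi_{i1})_{2\le i\le g}.$$
This exhibits the $\mathbb{Z}_2$--structure of the cover: the symmetric part of the lattice is an isogeny copy of $JC$, while the antisymmetric part is the Prym with period $\tau$.

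Next I would substitute this matrix into the genus--$(2g-1)$ theta series and split the lattice sum $n=(n_0,n_I,n_{II})$ using $s=n_I+n_{II}$ and $d=n_I-n_{II}$ (which satisfy $s\equiv d\bmod 2$). The quadratic form separates as $2\Pi_{11}n_0^2+2n_0\rho^t s+\tfrac12 s^t\Pi' s$ plus $\tfrac12 d^t\tau d$; writing $s=2s'+\varepsilon$ and $d=2d'+\varepsilon$ for a common parity class $\varepsilon\in\mathbb{Z}_2^{g-1}$, the $(n_0,s')$--part assembles into a genus--$g$ theta of $\Pi$ carrying $\varepsilon$ in its last $g-1$ slots, while the $d$--sum assembles into the second order Prym theta $\theta_\varepsilon(\tau;\cdot)$ of \eqref{eq:thetau}. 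The entry $2\Pi_{11}$ is decisive: in the $\widehat b_1$--direction the $C$--factor has period $2\Pi_{11}$, so the doubling relation splits it into the product of the two genus--$g$ thetanulls carrying $0$ and $1$ in the first slot, which are exactly the two factors of \eqref{b2}. Inserting a half--integer characteristic on $\widetilde C$ and tracking how it distributes through the symmetric/antisymmetric change of basis produces the second characteristics $\varepsilon'$ on both sides.

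The last step extracts a clean proportionality from this factorization by restricting to the Prym. For the appropriate $2$--torsion translate $c$ one has $\Theta_{\widetilde C}\cap(P+c)=2\,\Xi$ with the characteristic $(\varepsilon,\varepsilon')$ built in (this is the divisor statement $\mathcal O(\Theta_{\widetilde C})\big\vert_P\cong\mathcal O(2\Xi)$ underlying Theorem \ref{prop:fundamentalpullback}). Hence $\theta(\widetilde\Pi;\,\cdot\,)\big\vert_{P+c}$ is a single section of $2\Xi$, so it is proportional to the square $\theta^2\!\begin{bmatrix}\varepsilon\\\varepsilon'\end{bmatrix}(\tau;\cdot)$. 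Comparing this with the factorization of the previous step, and applying the classical addition theorem so that the $C$--factor becomes the product of the two thetanulls differing only by the $\eta$--shift in the first slot, one reads off the proportionality between $\theta^2\!\begin{bmatrix}\varepsilon\\\varepsilon'\end{bmatrix}(\tau;0)$ and $\theta\!\begin{bmatrix}0 & \varepsilon\\0 & \varepsilon'\end{bmatrix}(\Pi;0)\,\theta\!\begin{bmatrix}0 & \varepsilon\\1 & \varepsilon'\end{bmatrix}(\Pi;0)$, which is the commutativity of the diagram via \eqref{b1}--\eqref{b2}.

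The main obstacle is precisely this characteristic and constant calculus: one must follow how a half--integer characteristic on $\widetilde C$ splits into the pair $(\varepsilon,\varepsilon')$ on $C$ and on $P$, control the signs coming from \eqref{eq:thetaparity} and the representative ambiguity noted after \eqref{eq:thetachar}, and---most importantly---verify that the comparison constant is the \emph{same} for every even characteristic, which is what makes the identity projectively well defined and restricts attention to $E_{g-1}$. Since this bookkeeping is the delicate heart of the classical argument, in the paper it is preferable to quote the result in the stated form from \cite[Theorem 1]{FR} rather than to reproduce the full characteristic computation.
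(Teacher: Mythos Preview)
The paper does not prove this theorem at all: it is stated as a classical result and attributed directly to \cite[Theorem 1]{FR}, with no argument given. Your proposal ultimately reaches the same conclusion in its final paragraph, recommending that the result be quoted from \cite{FR} rather than reproved; in that sense you and the paper agree. The sketch you provide of the classical factorization argument (block form of $\widetilde\Pi$, lattice splitting via $s=n_I+n_{II}$ and $d=n_I-n_{II}$, then the addition formula) is a reasonable outline of how Farkas--Rauch actually proceed, but since the paper treats the theorem as a black box, none of this is needed here.
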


The maps $\theta^2$ and $\Theta$ have been studied by several authors, see \cite{SM} and references therein; these maps are known to be well defined and projectively equivalent up to a Veronese embedding.

For $$u \,=\, (u_1,\,\cdots ,\, u_{g-1}) \,\in\, \mathbb{Z}_2^{g-1}$$ and
$\Pi\,\in\, \mathbb{H}_g$ set
\begin{gather}
\notag
\theta_u'(\Pi;\,z)\ :=\ \theta\begin{bmatrix}
 0 & u\\
 1 & 0
\end{bmatrix}(2\Pi;\, 2z)\ =\ \theta \begin{bmatrix}
 (0,u_1,\cdots ,u_{g-1})\\
 (1,0,\cdots ,0)
\end{bmatrix}(2\Pi;\, 2z),\\
\label{b3}
\Theta'_g\ :\ \mathbb{H}_g\ \longrightarrow\ \mathbb{P}^{2^{g-1}-1}, \ \ \, \Theta'_g(\Pi) := \ [\theta_u'(\Pi;\,0)]_{u\in \mathbb{Z}_2^{g-1}} \ =\ \left[\theta\begin{bmatrix}
 0 & u\\
 1 & 0
\end{bmatrix}(2\Pi;\, 0)\right]_{u\in \mathbb{Z}_2^{g-1}}.
\end{gather}

Lastly, denote by $v_2\,:\,\mathbb{P}(\mathbb{C}^n)\, \longrightarrow\,
\mathbb{P}(\Sym^2\mathbb{C}^n)$ the ``isometric" Veronese embedding induced by the linear map
\begin{equation}\label{b4}
v_2\left(\sum_{i=1}^n \lambda_i e_i\right) \ =\ \sum_{i=1}^n \lambda_i^2 e_i\odot e_i+\sum_{1\leq i<j\leq n} \sqrt{2}\lambda_i\lambda_j e_i\odot e_j.
\end{equation}
This map $v_2$ enjoys the following property: if $\omega_{FS}^N$ is the Fubini--Study metric on $\mathbb{P}(\Sym^2\mathbb{C}^n)$, and $\omega_{FS}^n$ is the Fubini--Study metric on $\mathbb{P}(\mathbb{C}^n)$, then
\begin{equation}\label{eq:isometricveronese}
v_2^\ast\, \omega_{FS}^N\ =\ 2 \ \omega_{FS}^n.
\end{equation}
See \cite[Lemma 3.1]{BV} for a proof.

\begin{proposition}\label{prop:welldef}
There is a holomorphic and isometric automorphism $p\,:\, \mathbb{P}^N\,\longrightarrow\,
\mathbb{P}^N$ that makes the following diagram  commutative:
\begin{equation}
\label{eq:pyramid}
\begin{tikzcd}
\mathbb{H}_{g-1} \arrow[r, "\Theta_{g-1}"] \arrow[rd, "\theta_{g-1}^2"'] & \mathbb{P}^{2^{g-1}-1} \arrow[d, "p\circ v_2"] & \mathbb{H}_g \arrow[l, "\Theta_g'"'] \arrow[ld, "\Theta_g^{SJ}"] \\
 & \mathbb{P}^N .&
\end{tikzcd}
\end{equation}
\end{proposition}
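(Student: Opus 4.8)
The plan is to realize both triangles of \eqref{eq:pyramid} as two instances of one classical duplication formula for theta functions, and then to check that the resulting linear map is, up to a scalar, unitary. Writing $\theta_\mu$ for the second order theta functions of \eqref{eq:thetau}, a direct computation with the quadratic exponent — splitting the summation variables $(n,n')$ into $s=a+b$ and $d=a-b$, where $a=n+\varepsilon/2$ and $b=n'+\varepsilon/2$, so that $s^t\tau s+d^t\tau d=2(a^t\tau a+b^t\tau b)$ decouples the two sublattices — yields, for every characteristic $(\varepsilon,\varepsilon')$,
\begin{equation}
\label{eq:dupl}
\theta\begin{bmatrix}\varepsilon\\\varepsilon'\end{bmatrix}(\tau;0)^2\ =\ \sum_{\mu\in\mathbb{Z}_2^{g-1}}(-1)^{\mu\cdot\varepsilon'}\,\theta_\mu(\tau;0)\,\theta_{\mu+\varepsilon}(\tau;0).
\end{equation}
Formula \eqref{eq:dupl} defines a linear map $A\colon\Sym^2\mathbb{C}^{2^{g-1}}\to\mathbb{C}^{N+1}$, with coordinates on the target indexed by $E_{g-1}$, sending the symmetric product attached to $\{\mu,\mu+\varepsilon\}$ to the characteristic $(\varepsilon,\varepsilon')$ with weight $(-1)^{\mu\cdot\varepsilon'}$. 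I take $p$ to be the projective automorphism induced by $A$ under the identifications $\Sym^2\mathbb{C}^{2^{g-1}}\cong\mathbb{C}^{N+1}\cong\mathbb{C}^{E_{g-1}}$. By construction $p\circ v_2\circ\Theta_{g-1}=\theta^2_{g-1}$, which is the left triangle.

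Next I would check that $A$ is a scalar multiple of a unitary matrix, which is exactly what makes $p$ an isometry for $\omega_{FS}$. The key is that $A$ is block diagonal with respect to the value of $\varepsilon$. For $\varepsilon=0$ the block is the Walsh--Hadamard matrix $[(-1)^{\mu\cdot\varepsilon'}]$ on $\mathbb{Z}_2^{g-1}$, satisfying $HH^{*}=2^{g-1}\,\mathrm{Id}$. For $\varepsilon\neq0$ the evenness condition $\varepsilon\cdot\varepsilon'=0$ forces $(-1)^{\mu\cdot\varepsilon'}=(-1)^{(\mu+\varepsilon)\cdot\varepsilon'}$, so the two ordered terms in \eqref{eq:dupl} carry a factor $2$ on the unordered pair $\{\mu,\mu+\varepsilon\}$; combined with the factor $\sqrt2$ built into the isometric Veronese \eqref{b4}, this block becomes $\sqrt2$ times the Walsh--Hadamard matrix of $\mathbb{Z}_2^{g-1}/\langle\varepsilon\rangle\cong\mathbb{Z}_2^{g-2}$, the characters $\varepsilon'\mapsto(-1)^{(\,\cdot\,)\cdot\varepsilon'}$ with $\varepsilon'\perp\varepsilon$ being precisely the characters of that quotient. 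Each block thus has the same scale $2^{(g-1)/2}$, so $A=2^{(g-1)/2}U$ with $U$ unitary, and $p$ is a holomorphic isometry. Invertibility of $A$ also shows that the three Thetanullwert maps are simultaneously well defined, since the three coordinate tuples are related by the isomorphisms $v_2$ and $A$.

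For the right triangle I would run the analogous computation with the genus $g$ functions of $\Theta^{SJ}_g$, observing that $\theta'_u(\Pi;0)=\theta\begin{bmatrix}0&u\\1&0\end{bmatrix}(2\Pi;0)$ (see \eqref{b3}) is a second order theta function twisted by the first--coordinate lower characteristic $f:=(1,0,\dots,0)$. Applying the product version of \eqref{eq:dupl} to $\theta\begin{bmatrix}0&\varepsilon\\0&\varepsilon'\end{bmatrix}(\Pi;0)\,\theta\begin{bmatrix}0&\varepsilon\\1&\varepsilon'\end{bmatrix}(\Pi;0)$ expresses it as a sum over $\mu=(\mu_1,\widehat\mu)\in\mathbb{Z}_2^{g}$ of products of two second order theta constants whose first--coordinate half--integer characteristic is $[(\mu_1,\dots);(1,0)]$. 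When $\mu_1=1$ this characteristic is odd (its parity is $(-1)^{1\cdot 1}=-1$), so the corresponding theta constant vanishes at $z=0$; only the terms with $\mu_1=0$, which are exactly the $\theta'_{\widehat\mu}$, survive. Tracking the phases — the first coordinate produces the defining twists of $\theta'$, while the last $g-1$ coordinates reproduce the weight $(-1)^{\widehat\mu\cdot\varepsilon'}$ — shows the surviving sum equals $\sum_{\widehat\mu}(-1)^{\widehat\mu\cdot\varepsilon'}\theta'_{\widehat\mu}(\Pi;0)\,\theta'_{\widehat\mu+\varepsilon}(\Pi;0)$, i.e. the same map $A$. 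Hence $p\circ v_2\circ\Theta'_g=\Theta^{SJ}_g$, the right triangle.

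I expect the main obstacle to be the bookkeeping of this last step: choosing the integer (not merely mod $2$) representatives of the characteristics in the split $s=a+b$, $d=a-b$ so that the surviving phases land exactly on $(-1)^{\widehat\mu\cdot\varepsilon'}$ and the residual $(-1)^{p_1},(-1)^{q_1}$ factors match the definition of $\theta'_u$ verbatim. The conceptual input — that the wrong--parity terms die because they are odd theta constants — is clean, but forcing the two instances of the duplication formula to produce \emph{literally the same} matrix $A$, rather than proportional matrices with an $\varepsilon$--dependent constant, requires care with the normalizations, and this is precisely where the even characteristic hypothesis $\varepsilon\cdot\varepsilon'=0$ and the isometric normalization \eqref{b4} of $v_2$ enter.
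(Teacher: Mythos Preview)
Your proposal is correct and follows essentially the same approach as the paper: both rely on the Riemann addition formula \eqref{addition} to relate the products $\theta_u\theta_{u'}$ (resp.\ $\theta'_u\theta'_{u'}$) to the thetanulls defining $\theta^2_{g-1}$ (resp.\ $\Theta^{SJ}_g$), producing a single matrix that is a scalar multiple of a unitary. The only cosmetic differences are that you invert the direction (writing the first-order squares in terms of the second-order products rather than vice versa), handle the right triangle via the vanishing of the odd theta constants $\theta\bigl[\begin{smallmatrix}1&\hat\mu\\1&0\end{smallmatrix}\bigr](2\Pi;0)$ where the paper instead observes that the two $\sigma_1$-terms coincide and give a factor of two, and you spell out the block Walsh--Hadamard structure of the matrix where the paper defers to \cite{BV}.
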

\begin{proof}
The commutativity of the left triangle was already observed in  \cite[Lemma 3.2]{BV}, but it is convenient to prove in one shot that there is a vertical map  that makes both triangles commute.

The Riemann summation formula (\cite[p.~139]{IG}) will be used 
in an essential way. Using the notation of \eqref{eq:thetachar} this formula says that for all $\alpha,\,\beta,\,\varepsilon$ in $\mathbb{Z}_2^g$,\, $Z\,\in\,\mathbb{H}_g$,\, and $z,\,x\,\in\,\mathbb{C}^g$, 
\begin{multline}
\label{addition}
\theta\begin{bmatrix}
 \alpha\\
 \beta
\end{bmatrix}(2Z;\,2z)\cdot  
\theta\begin{bmatrix}
 \alpha+\varepsilon\\
 \beta
\end{bmatrix}(2Z;\,2x)\\ 
=\ 
\frac{1}{2^g} \sum_{\sigma\in\mathbb{Z}_2^g} (-1)^{\alpha\cdot \sigma} 
\ \theta\begin{bmatrix}
\varepsilon\\
\beta+\sigma
\end{bmatrix}(Z;\,z+x)
\cdot
\theta\begin{bmatrix}
\varepsilon\\
\sigma
\end{bmatrix}(Z;\,z-x).
\end{multline}
Notice that $\theta_u(\Pi;\,z)\,=\,\theta\begin{bmatrix}
u\\
0
\end{bmatrix}(2\Pi;\,2z)$. 
Applying the summation formula with $z=x=0$ we get
\begin{equation}\label{b8}
\theta_u(\Pi;\,0)\cdot \theta_{u+\varepsilon}(\tau;\,0)\ =\ \frac{1}{2^{g-1}}\sum_{\sigma\in\mathbb{Z}_2^{g-1}} (-1)^{u\cdot \sigma}\theta^2\begin{bmatrix}
\varepsilon\\
\sigma
\end{bmatrix}(0;\,\tau)
\end{equation}
for all $\Pi\,\in\,\mathbb{H}_{g-1}$ and for all $u\,\in\,\mathbb{Z}_2^{g-1}$.
Substituting $u'$ for $u+\varepsilon$ in \eqref{b8} 
and recalling from   \eqref{eq:thetaparity} that
$$
\theta^2\begin{bmatrix}
u+u'\\
\sigma
\end{bmatrix}(\Pi;\,0)\ =\ 0,
$$ 
when $(u+u')\cdot \sigma\,\equiv\,1\mod 2$,
we get 
\begin{multline*}
\theta_u(\Pi;\,0)\theta_{u'}(\Pi;\,0)\ =\ \frac{1}{2^{g-1}}\sum_{\sigma\in\mathbb{Z}_2^{g-1}} \theta^2\begin{bmatrix}
 u + u'\\
 \sigma
\end{bmatrix}(\Pi;\, 0)\ =\\
=\ \frac{1}{2^{g-1}}\sum_{\sigma\in\mathbb{Z}_2^g: (u+u',\sigma)\in E_{g-1}}
(-1)^{u\cdot \sigma} \theta^2\begin{bmatrix} u+u'\\
\sigma
\end{bmatrix}(\Pi;\,0),
\end{multline*}
Therefore,
 \begin{equation}
 \label{eq:emme1}
 \sqrt{2}\theta_{u}(0;\,\tau)\theta_{u'}(0;\,\tau)\,\,=\,\,
\sum_{(\varepsilon,\sigma)\in E_{g-1}} M_{u,u';\varepsilon,\sigma}\theta^2\begin{bmatrix}
 \varepsilon\\
 \sigma
\end{bmatrix}(\Pi;\,0)\ \ \, (u,\, u'\,\in\, \mathbb{Z}_2^{g-1},\
u\, \not=\, u')
 \end{equation}
 \begin{equation}
 \label{eq:emme2}
 \theta_{u}(\Pi;\,0)^2\,\,=\,\,\sum_{(\varepsilon,\sigma)\in E_{g-1}} M_{u,u;\varepsilon,\sigma}\theta^2\begin{bmatrix}
 \varepsilon\\
 \sigma
 \end{bmatrix}(\Pi;\,0)\
\ \ (u\,\in\, \mathbb{Z}_2^{g-1}),
 \end{equation}
 where
\begin{equation}
\label{eq:matrixM}
M_{u,u';\varepsilon,\sigma}\,\,\,=\,\,\,
 \begin{cases}
 \frac{1}{2^{g-1}}(-1)^{u\cdot \sigma} & \,\text{if }\,u+u'\,=\,\varepsilon\,=\,0\\
 \frac{\sqrt{2}}{2^{g-1}}(-1)^{u\cdot \sigma} & \,\text{if }\,u+u'\,=\,\varepsilon\,\neq\, 0\\
 0 &\, \text{otherwise.}
 \end{cases} 
\end{equation}
Here $M$ should be understood as a matrix with $(4^{g-1}+2^{g-1})/{2}$
columns  indexed by
even characteristics $(\varepsilon,\,\sigma)$ and $(4^{g-1}+2^{g-1})/{2}$ rows indexed by (unordered) pairs of elements $(u,\,u')$ of $\mathbb{Z}_2^{g-1}$. By \eqref{eq:emme1} and \eqref{eq:emme2}, the automorphism $p$ induced by $M$ satisfies $p\circ v_2\circ \Theta_{g-1}=\theta^2_{g-1}$.

The following notation will be used: we will write $\widetilde\sigma\,\in\,\mathbb{Z}_2^g$ as $(\sigma_1,\,\sigma)$ with $\sigma_1\,\in\,\mathbb{Z}_2$ and $\sigma\,\in\,\mathbb{Z}_2^{g-1}$. Applying the summation formula \eqref{addition} again, we get
\begin{align*}
\theta_u'(\Pi;\,0)\cdot &\theta'_{u+u'}(\Pi;\,0)\\
&=\ \frac{1}{2^g}\sum_{\widetilde\sigma\in\mathbb{Z}_2^g}(-1)^{(0,u)\cdot (\sigma_1,\sigma)}\theta\begin{bmatrix}
0 & u+u'\\
1+\sigma_1 & \sigma
\end{bmatrix}(\Pi;0)
\theta\begin{bmatrix}
0 & u+u'\\
\sigma_1 & \sigma
\end{bmatrix}(\Pi;\,0)\\
&=\ \frac{2}{2^g}\sum_{\sigma\in\mathbb{Z}_2^{g-1}} (-1)^{u\cdot \sigma} 
\theta\begin{bmatrix}
0 & u+u'\\
1 & \sigma
\end{bmatrix}(\Pi;\,0)
\theta\begin{bmatrix}
0 & u+u'\\
0 & \sigma
\end{bmatrix}(\Pi;\,0)\\
&=\ \frac{1}{2^{g-1}}\sum_{\sigma\in\mathbb{Z}_2^{g-1}: (u+u',\sigma)\in E_{g-1}} (-1)^{u\cdot \sigma} 
\theta\begin{bmatrix}
0 & u+u'\\
1 & \sigma
\end{bmatrix}(\Pi;\,0)
\theta\begin{bmatrix}
0 & u+u'\\
0 & \sigma
\end{bmatrix}(\Pi;\,0).
\end{align*}
Then we have
\begin{equation*}
 \sqrt{2}\, \theta'_{u}(0;\,\Pi)\cdot\theta'_{u'}(0;\,\Pi)\,\,=\,\,
\sum_{(\varepsilon,\sigma)\in E_{g-1}} M_{u,u';\varepsilon,\sigma}
\theta\begin{bmatrix}
0 & \varepsilon\\
1 & \sigma
\end{bmatrix}(\Pi;\,0)
\theta\begin{bmatrix}
0 & \varepsilon\\
0 & \sigma
\end{bmatrix}(\Pi;\,0)
\end{equation*}
\begin{equation*}
 \theta_{u}'(\Pi;\,0)^2\,\,=\,\,\sum_{(\varepsilon,\sigma)\in E_{g-1}} M_{u,u;\varepsilon,\sigma}
 \theta\begin{bmatrix}
0 & \varepsilon\\
1 & \sigma
\end{bmatrix}(\Pi;\,0)
\theta\begin{bmatrix}
0 & \varepsilon\\
0 & \sigma
\end{bmatrix}(\Pi;\,0)
\end{equation*}
for $u\,\neq\, u'\,\in\,\mathbb{Z}_2^{g-1}$; here $M$ is the same as in \eqref{eq:matrixM}. Then, exactly as before, we deduce that $p\circ v_2\circ \Theta_g'\,=\,\Theta_g^{SJ}$, and the diagram in \eqref{eq:pyramid} commutes.

To prove that $p$ is an isometry of the Fubini--Study metric, it is enough to show that $M$ is the multiple of a unitary matrix. This is in fact a straightforward computation; we refer to the proof of \cite[Lemma 3.2]{BV}.
\end{proof}

\begin{corollary}
\label{cor:welldef}
The maps $\Theta_{g-1},\, \Theta_g',\, \theta_{g-1}^2$\, and $\Theta_g^{SJ}$ are all well defined.
\end{corollary}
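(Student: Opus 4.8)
The plan is to reduce all four statements to two non-vanishing assertions, one on $\mathbb{H}_{g-1}$ and one on $\mathbb{H}_g$, and then to transport them across the maps appearing in Proposition \ref{prop:welldef}. Reading the formulas \eqref{eq:emme1}--\eqref{eq:emme2}, together with their counterparts for the functions $\theta_u'$ proved in the same Proposition, as equalities of coordinate vectors, one sees that the lift of $v_2\circ\Theta_{g-1}$ is $M$ applied to the lift of $\theta_{g-1}^2$, and the lift of $v_2\circ\Theta_g'$ is $M$ applied to the lift of $\Theta_g^{SJ}$. Here $M$ is invertible, since it induces the automorphism $p$ of $\mathbb{P}^N$ (equivalently, it is a nonzero multiple of a unitary matrix, as recorded in the proof of Proposition \ref{prop:welldef}), and the Veronese $v_2$ in \eqref{b4} carries a nonzero vector to a nonzero vector. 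Consequently, for each $\tau$ and each $\Pi$ one has $\Theta_{g-1}(\tau)\neq 0 \Leftrightarrow \theta_{g-1}^2(\tau)\neq 0$ and $\Theta_g'(\Pi)\neq 0 \Leftrightarrow \Theta_g^{SJ}(\Pi)\neq 0$. It thus suffices to show that $\Theta_{g-1}$ is defined on all of $\mathbb{H}_{g-1}$ and that $\Theta_g'$ is defined on all of $\mathbb{H}_g$.

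The first of these is immediate. By definition $\Theta_{g-1}(\tau)=[\theta_u(\tau;0)]_{u\in\mathbb{Z}_2^{g-1}}$, and $\{\theta_u(\tau;\cdot)\}_{u\in\mathbb{Z}_2^{g-1}}$ is a basis of $V(\tau)\cong H^0(X_\tau,2\Theta_\tau)$. Base-point freeness of $|2\Theta_\tau|$ (theorem of the square) says the origin is not a base point, so the values $\theta_u(\tau;0)$ do not all vanish. This settles $\Theta_{g-1}$, and hence $\theta_{g-1}^2$, on all of $\mathbb{H}_{g-1}$.

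The main obstacle is $\Theta_g'$, because its $2^{g-1}$ defining functions $\theta_u'(\Pi;0)=\theta\begin{bmatrix}0 & u\\ 1 & 0\end{bmatrix}(2\Pi;0)$ form only half of a basis of $H^0(X_\Pi,2\Theta_\Pi)$, so base-point freeness of the full system does not apply directly. I would first rewrite them, using only the definition \eqref{eq:thetau} and the shift produced by the lower characteristic, as $\theta_u'(\Pi;0)=\theta_{(0,u)}(\Pi;e_1/4)$, where $\theta_v(\Pi;z)=\theta\begin{bmatrix}v\\ 0\end{bmatrix}(2\Pi;2z)$, $v\in\mathbb{Z}_2^g$, is the standard basis of second-order theta functions and $e_1=(1,0,\ldots,0)$. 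The heart of the argument is a parity computation at the $4$-torsion point $\eta=e_1/4$: each $\theta_v(\Pi;\cdot)$ is even by \eqref{eq:thetaparity}, while translation of the argument by the half-period $e_1/2$ multiplies $\theta_v$ by $(-1)^{v_1}$. Since $\eta+e_1/2$ and $-\eta$ differ by the lattice vector $e_1$, along which $\theta_v(\Pi;\cdot)$ is invariant, these two facts force $\theta_v(\Pi;\eta)=(-1)^{v_1}\theta_v(\Pi;\eta)$; hence every coordinate with $v_1=1$ vanishes at $\eta$.

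To finish, I would invoke base-point freeness of $|2\Theta_\Pi|$ once more: the full tuple $[\theta_v(\Pi;\eta)]_{v\in\mathbb{Z}_2^g}$ is nonzero, and since all its $v_1=1$ entries vanish, some entry with $v_1=0$ must be nonzero. This is exactly the statement that $\Theta_g'(\Pi)\neq 0$, valid for every $\Pi\in\mathbb{H}_g$. Well-definedness of $\Theta_g^{SJ}$ then follows from the equivalence established in the first paragraph, and all four maps are everywhere defined. The only delicate point is the $g$-dimensional side; the rest is the routine bookkeeping of matching coordinate tuples through $v_2$ and $M$.
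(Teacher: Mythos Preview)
Your proof is correct and follows essentially the same route as the paper's: reduce via the diagram of Proposition~\ref{prop:welldef} to $\Theta_{g-1}$ and $\Theta_g'$, handle $\Theta_{g-1}$ by base-point freeness of $|2\Theta_\tau|$, and for $\Theta_g'$ rewrite the coordinates as values of the full basis $\{\theta_v(\Pi;\cdot)\}_{v\in\mathbb{Z}_2^g}$ at $e_1/4$ and then kill the $v_1=1$ half by parity. The only cosmetic difference is that the paper obtains the vanishing $\theta_{(1,u)}(\Pi;e_1/4)=0$ in one line by invoking \eqref{eq:thetaparity} on the odd characteristic $\begin{bmatrix}1&u\\1&0\end{bmatrix}$, whereas you derive the same fact by hand from evenness and the half-period shift; these are equivalent.
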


\begin{proof}
In view of \eqref{eq:pyramid}, it suffices to prove that $\Theta_{g-1}$ and $\Theta_g'$ are well defined. 
The first one is the easiest: as was previously observed, the functions $\{\theta_u(\tau;\,z)\}_{u\in\mathbb{Z}_2^{g-1}}$ defined in \eqref{eq:thetau} constitute a basis of $V(\tau)$ in \eqref{eq:secondorder}. 
Since the latter is base point free, at least one of the $\theta_u(\tau;\,0)$ must be nonzero, and hence
$\Theta_{g-1}$ is well defined.

We now show that $\Theta'_g$ is well defined.
Take any $j\,\in\,\mathbb{Z}_2$ and $u\,=\, (u_1,\,\cdots ,\, u_{g-1})\,\in\,\mathbb{Z}_2^{g-1}$. It is easy to see from the definition that
$$\theta\begin{bmatrix}
j & u\\
1 & 0
\end{bmatrix}(2\Pi;\, 2z)\ =\ \theta_{(j,u_1,\cdots ,u_{g-1})}(\Pi;\,z+e_1/4),$$
where $\theta_{(j,u_1,\cdots ,u_{g-1})}(\Pi,z)$ is defined in \eqref{eq:thetau}.
Since 
$$\{\theta_{(j,u_1,\cdots ,u_{g-1})}(\Pi;\cdot)\}_{j\in \mathbb{Z}_2,u\in\mathbb{Z}_2^{g-1}}\ = \ \{\theta_v(\Pi;\cdot)\}_{v\in\mathbb{Z}_2^g}$$ 
is a basis of the base point-free linear system $V(\Pi)$, we must have 
$$\theta\begin{bmatrix}
j & u\\
1 & 0
\end{bmatrix}(2\Pi;\, 0) \ =\ \theta_{(j,u_1,\cdots ,u_{g-1})}(\Pi;e_1/4)\,\neq \,0$$ for some $j\,\in\,\mathbb{Z}_2$ and some $u\,\in\,\mathbb{Z}_2^{g-1}$. However, by \eqref{eq:thetaparity}, $\theta\begin{bmatrix}
1 & u\\
1 & 0
\end{bmatrix}(2\Pi;\, 0)\,= \,0$ for all $u\in\mathbb{Z}_2^{g-1}$. 
This proves that $\theta\begin{bmatrix}
0 & u\\
1 & 0
\end{bmatrix}(2\Pi;\, 0)\,\neq\, 0$ for some $u\,\in\,\mathbb{Z}_2^{g-1}$, and it follows that $\Theta'_g$ is well defined.
\end{proof}

To simplify notation we will use the same symbol $\omega_{FS}$ to denote the Fubini--Study metric on any projective space with a fixed system of homogeneous coordinates.
\begin{corollary}
\label{cor:equivalentdebar}
 Identify the cotangent bundle of $\widetilde{M}_g$ with $f_\ast K^{\otimes 2}_{\mathcal{C}/B}$ through the isomorphism $\lambda$  described in \eqref{eq:lambdaiso}. Then
$$\debar \beta^P\ =8\pi\ \tau^\ast \Theta_{g-1}^\ast \omega_{FS}\ =\ 4\pi\ \tau^\ast (\theta^2_{g-1})^\ast \omega_{FS}\ =\ 4\pi\, \Pi^\ast (\Theta_{g
}^{SJ})^\ast \omega_{FS}\ =\ 8\pi \,\Pi^* \,(\Theta_g')^\ast\omega_{FS}.$$
\end{corollary}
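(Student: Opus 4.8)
The plan is to read the statement as a chain of four expressions whose first term is already computed and whose successive identifications are pure functorial bookkeeping with the Fubini--Study form. The first equality, $\debar \beta^P = 8\pi\,\tau^\ast\Theta_{g-1}^\ast\omega_{FS}$, is exactly the content of Theorem \ref{teo:debar} (equivalently Remark \ref{rem:debarwithpg}), so nothing new is needed there. The remaining three equalities will follow by pulling back $\omega_{FS}$ through the two commutative diagrams already at our disposal: the ``pyramid'' \eqref{eq:pyramid} of Proposition \ref{prop:welldef} and the Schottky--Jung square of Theorem \ref{TSJ}.

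First I would dispatch the second and fourth equalities simultaneously, since both rest on the same metric computation. The automorphism $p\,:\,\mathbb{P}^N\,\longrightarrow\,\mathbb{P}^N$ of Proposition \ref{prop:welldef} is an isometry, so $p^\ast\omega_{FS}=\omega_{FS}$, while the isometric Veronese $v_2$ satisfies $v_2^\ast\omega_{FS}=2\,\omega_{FS}$ by \eqref{eq:isometricveronese}; hence $(p\circ v_2)^\ast\omega_{FS}=2\,\omega_{FS}$. Applying this to the left triangle of \eqref{eq:pyramid}, namely $\theta^2_{g-1}=p\circ v_2\circ\Theta_{g-1}$, gives $(\theta^2_{g-1})^\ast\omega_{FS}=2\,\Theta_{g-1}^\ast\omega_{FS}$, and pulling back by $\tau$ turns $8\pi\,\tau^\ast\Theta_{g-1}^\ast\omega_{FS}$ into $4\pi\,\tau^\ast(\theta^2_{g-1})^\ast\omega_{FS}$, which is the second equality. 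Identically, the right triangle $\Theta_g^{SJ}=p\circ v_2\circ\Theta_g'$ yields $(\Theta_g^{SJ})^\ast\omega_{FS}=2\,(\Theta_g')^\ast\omega_{FS}$, so that $4\pi\,\Pi^\ast(\Theta_g^{SJ})^\ast\omega_{FS}=8\pi\,\Pi^\ast(\Theta_g')^\ast\omega_{FS}$, which is the fourth equality. The factor $2$ coming from the Veronese is exactly what converts the coefficient $8\pi$ into $4\pi$ and back.

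It remains to connect the two middle $4\pi$ terms, and this is precisely where the Schottky--Jung identity enters. By Theorem \ref{TSJ} the diagram commutes, i.e. $\theta^2_{g-1}\circ\tau=\Theta_g^{SJ}\circ\Pi$ as maps on $\widetilde M_g$. Taking pullbacks of $\omega_{FS}$ and using functoriality, $\tau^\ast(\theta^2_{g-1})^\ast\omega_{FS}=(\theta^2_{g-1}\circ\tau)^\ast\omega_{FS}=(\Theta_g^{SJ}\circ\Pi)^\ast\omega_{FS}=\Pi^\ast(\Theta_g^{SJ})^\ast\omega_{FS}$; multiplying by $4\pi$ produces the third equality and closes the chain.

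There is no genuine analytic obstacle here, since the entire content has been front-loaded into Theorem \ref{teo:debar}, Proposition \ref{prop:welldef} and Theorem \ref{TSJ}. The only point demanding care is tracking the factor $2$ produced by the isometric Veronese embedding, which is exactly what keeps the numerical constants $8\pi$ and $4\pi$ consistent across the chain; all four expressions therefore represent one and the same $(1,1)$--form on $\widetilde M_g$.
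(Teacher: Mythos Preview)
Your proof is correct and follows essentially the same approach as the paper's: the first equality is Theorem \ref{teo:debar}, the second and fourth come from the two triangles of \eqref{eq:pyramid} together with $p^\ast\omega_{FS}=\omega_{FS}$ and $v_2^\ast\omega_{FS}=2\,\omega_{FS}$, and the third is the Schottky--Jung commutativity of Theorem \ref{TSJ}. The only cosmetic difference is that you treat the second and fourth equalities together while the paper handles them in order.
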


\begin{proof}
The first equality is Theorem \ref{teo:debar}. 
Recall from  \eqref{eq:pyramid} that $p\circ v_2 \circ \Theta_{g-1} = \theta^2_{g-1}$.
Since $p$ is isometric it follows from   \eqref{eq:isometricveronese} that 
$(\theta^2_{g-1})^* \omega_{FS} = 2 \Theta_{g-1}^*\omega_{FS}$. This proves 
the second equality. 
The third follows immediately from Theorem \ref{TSJ}. The fourth equality follows again using \eqref{eq:pyramid}:  
$ (\Theta_{g
}^{SJ})^\ast \omega_{FS}\
= (\Theta'_{g-1})^* v_2^* p^* \omega_{FS} = 2 (\Theta'_{g-1})^* \omega_{FS}$.
\end{proof}

\subsection{The answer to Question \ref{question:one}}

The following notation will be used: for $Z\,\in\,\mathbb{H}_{k}$ with $k\,<\,g$, denote
$$\{Z\}\times \mathbb{H}_{g-k}\, :=\, \left\{\begin{bmatrix}
 Z & 0\\
 0 & Y
\end{bmatrix}\,\,\big\vert\,\, Y\,\in\, \mathbb{H}_{g-k}\right\}, \ \ \, \mathbb{H}_{g-k}\times \{Z\}\,:=\,\left\{\begin{bmatrix}
 Y & 0\\
 0 & Z
\end{bmatrix}\,\, \big\vert\,\, Y\,\in\, \mathbb{H}_{g-k}\right\}.$$

\begin{lemma}
\label{lemma:thetareducible}
Let $\varepsilon,\,\delta\,\in\, \mathbb{Z}_2^{k}$ and $\varepsilon',\,\delta'\,\in\, \mathbb{Z}_2^{g-k}$. Let $\tau\,\in\, \mathbb{H}_k$ and $\tau'\,\in \,\mathbb{H}_{g-k}$, and let $T\,=\,\begin{bmatrix}
 \tau & 0\\
 0 & \tau'
\end{bmatrix}$. Then 
\begin{multline}
\theta\begin{bmatrix}
 \varepsilon_1,\cdots ,\varepsilon_k, \varepsilon_1',\cdots ,\varepsilon_{g-k}'\\
 \delta_1,\cdots ,\delta_k, \delta_1',\cdots ,\delta_{g-k}'
\end{bmatrix}(T;\,z)\\
=\ \theta\begin{bmatrix}
 \varepsilon_1,\cdots ,\varepsilon_k\\
 \delta_1,\cdots ,\delta_k
\end{bmatrix}(\tau;\,(z_1,\cdots ,z_k))\theta\begin{bmatrix}
 \varepsilon_1',\cdots ,\varepsilon_{g-k}'\\
 \delta_1',\cdots ,\delta_{g-k}'
\end{bmatrix}(\tau';\, (z_{k+1},\cdots ,z_g)). 
\end{multline}
\end{lemma}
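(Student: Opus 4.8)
The plan is to unwind the series definition \eqref{eq:thetachar} of the theta function with characteristics, applied now to the genus-$g$ matrix $T$, and to exploit the block-diagonal shape of $T$ to split the lattice sum into a product. First I would write the left-hand side as
\begin{equation*}
\theta\begin{bmatrix} \varepsilon,\varepsilon'\\ \delta,\delta' \end{bmatrix}(T;\,z)\ =\ \sum_{m\in\mathbb{Z}^g} \exp\left(\pi\sqrt{-1}\,(m+a/2)^t\,T\,(m+a/2)+2\pi\sqrt{-1}\,(m+a/2)^t(z+b/2)\right),
\end{equation*}
where $a=(\varepsilon,\varepsilon')$ and $b=(\delta,\delta')$ are the concatenated representatives in $\mathbb{Z}^g$ of the upper and lower characteristics; the sub-theta functions on the right-hand side will then be taken with respect to the corresponding sub-representatives, so the identity is meant for these compatible choices.

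Next I would decompose the summation index as $m=(n,n')$ with $n\in\mathbb{Z}^k$ and $n'\in\mathbb{Z}^{g-k}$, so that $m+a/2=(n+\varepsilon/2,\,n'+\varepsilon'/2)$, and similarly split the variable as $z=((z_1,\dots,z_k),(z_{k+1},\dots,z_g))$. Because $T=\begin{bmatrix}\tau&0\\0&\tau'\end{bmatrix}$ has vanishing off-diagonal blocks, the quadratic form in the exponent contains no cross terms and splits as
\begin{equation*}
(m+a/2)^t\,T\,(m+a/2)\ =\ (n+\varepsilon/2)^t\tau(n+\varepsilon/2)+(n'+\varepsilon'/2)^t\tau'(n'+\varepsilon'/2),
\end{equation*}
while the linear term splits in the same way into a piece involving only $(n,\varepsilon,\delta,(z_1,\dots,z_k))$ and a piece involving only $(n',\varepsilon',\delta',(z_{k+1},\dots,z_g))$.

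Consequently each summand factors as a product of an exponential depending only on the first block of data and one depending only on the second, and the sum over $\mathbb{Z}^g=\mathbb{Z}^k\times\mathbb{Z}^{g-k}$ factors as the product of the two independent sums. Recognizing each factor as the corresponding lower-genus theta function via \eqref{eq:thetachar} then yields the claimed identity. The computation is entirely routine; the only place where the hypothesis on $T$ enters is the vanishing of the cross terms in the quadratic form, which is exactly where block-diagonality is used, so there is no real obstacle beyond careful bookkeeping of the characteristics and the coordinates.
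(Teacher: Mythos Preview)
Your proposal is correct and is exactly the approach the paper intends: the paper's own proof is the single sentence ``This follows immediately from the definition in \eqref{eq:thetachar},'' and your argument is precisely the routine verification underlying that sentence.
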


\begin{proof}
This follows immediately from the definition in \eqref{eq:thetachar}.
\end{proof}

\begin{proposition}
\label{prop:subsiegel}
Let $\Pi'\,\in\,\mathbb{H}_{g-1}$.
The restriction
$$\Theta'_g\big\vert_{\mathbb{H}_1\times \{\Pi'\}}\ :\ \mathbb{H}_1\times \{\Pi'\}\ \longrightarrow\ \mathbb{P}^{2^{g-1}-1}$$
is a constant map, while the restriction
$$\Theta'_g\big\vert_{\{\Pi'\}\times \mathbb{H}_1}\ :\ \{\Pi'\}\times \mathbb{H}_1
\ \longrightarrow\ \mathbb{P}^{2^{g-1}-1}$$
is an immersion. (See \eqref{b3} for the definition of $\Theta'_g$.)
\end{proposition}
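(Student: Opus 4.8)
The plan is to exploit, on each of the two subspaces, the block-diagonal form of $\Pi$ together with the factorization of theta constants recorded in Lemma \ref{lemma:thetareducible}. Recall from \eqref{b3} that the coordinate of $\Theta'_g(\Pi)$ indexed by $u\,\in\,\mathbb{Z}_2^{g-1}$ is the theta constant $\theta\begin{bmatrix}0 & u\\ 1 & 0\end{bmatrix}(2\Pi;0)$, whose characteristic carries the distinguished entries $0$ (top) and $1$ (bottom) in the \emph{first} slot. On $\mathbb{H}_1\times\{\Pi'\}$ I would write $\Pi=\mathrm{diag}(Y,\Pi')$, so that $2\Pi$ is block diagonal with blocks $2Y$ and $2\Pi'$, and apply Lemma \ref{lemma:thetareducible} with $k=1$ to split off that first slot, obtaining
$$\theta'_u(\Pi;0)\ =\ \theta\begin{bmatrix}0\\ 1\end{bmatrix}(2Y;0)\cdot\theta\begin{bmatrix}u\\ 0\end{bmatrix}(2\Pi';0).$$
The first factor is independent of $u$ and nonzero, since $\begin{bmatrix}0\\ 1\end{bmatrix}$ is an even characteristic and the even theta constants in genus one are nowhere vanishing on $\mathbb{H}_1$. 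Hence $\Theta'_g(\Pi)=[\theta_u(\Pi';0)]_u=\Theta_{g-1}(\Pi')$ does not depend on $Y$, which proves that the first restriction is constant.

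On $\{\Pi'\}\times\mathbb{H}_1$ the distinguished slot now lies inside the \emph{fixed} block, and this asymmetry is exactly what forces the map to be nonconstant. Writing $\Pi=\mathrm{diag}(\Pi',Y)$ and $u=(u'',u_{g-1})$ with $u''\in\mathbb{Z}_2^{g-2}$, I would apply Lemma \ref{lemma:thetareducible} with $k=g-1$ to split off the last slot and get $\theta'_u(\Pi;0)=A_{u''}\,B_{u_{g-1}}(Y)$, where $A_{u''}=\theta\begin{bmatrix}0 & u''\\ 1 & 0\end{bmatrix}(2\Pi';0)$ (the genus $g-1$ analogue) depends only on the fixed $\Pi'$, and $B_0(Y),\,B_1(Y)$ are the genus one constants $\theta\begin{bmatrix}0\\ 0\end{bmatrix}(2Y;0)$ and $\theta\begin{bmatrix}1\\ 0\end{bmatrix}(2Y;0)$. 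Because $\Theta'_g$ is well defined (Corollary \ref{cor:welldef}), the $A_{u''}$ cannot all vanish, so they determine a fixed point of $\mathbb{P}^{2^{g-2}-1}$. The restriction is then the composite of $Y\mapsto[B_0(Y):B_1(Y)]$ with the linear embedding $\mathbb{P}^1\hookrightarrow\mathbb{P}^{2^{g-1}-1}$ obtained by tensoring with this fixed point, which is a restriction of the Segre embedding and hence an immersion. Thus it suffices to show that $Y\mapsto[B_0(Y):B_1(Y)]$, which is precisely the genus one map $\Theta_1$ of \eqref{Thg}, is an immersion.

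The main point, and the only genuinely nontrivial one, is this genus one immersion; everything above is bookkeeping with Lemma \ref{lemma:thetareducible}. Both $B_0=\theta_3(2Y)$ and $B_1=\theta_2(2Y)$ are even theta constants, hence nowhere vanishing on $\mathbb{H}_1$, so $\Theta_1(Y)=[1:w(Y)]$ with $w=B_1/B_0$ a nowhere vanishing holomorphic function. I would then use the classical identity $w^4=\theta_2^4(2Y)/\theta_3^4(2Y)=\lambda(2Y)$, where $\lambda$ is the modular lambda function. Since $\lambda\colon\mathbb{H}_1\to\mathbb{C}\setminus\{0,1\}$ is the universal covering map, $\lambda'$ is nowhere zero; differentiating $w^4=\lambda(2Y)$ gives $4w^3w'=2\lambda'(2Y)\neq0$, and as $w\neq0$ we conclude $w'\neq0$, so $\Theta_1$ is an immersion. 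The hard part is therefore entirely concentrated in the nonvanishing of $\lambda'$, i.e. in the fact that the second order theta map in genus one has nowhere vanishing differential.
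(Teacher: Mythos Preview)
Your argument is correct and follows the same overall strategy as the paper: both proofs use Lemma \ref{lemma:thetareducible} to factor the theta constants along the block decomposition, obtain a common scalar factor on $\mathbb{H}_1\times\{\Pi'\}$ (hence constancy), and on $\{\Pi'\}\times\mathbb{H}_1$ reduce to the composition of $\Theta_1$ with a linear embedding $\mathbb{P}^1\hookrightarrow\mathbb{P}^{2^{g-1}-1}$ determined by the fixed vector $(A_{u''})_{u''}$.

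The one substantive difference is in how the genus one step is handled. The paper simply invokes \cite[p.~1490]{SM} for the fact that $\Theta_1$ is an immersion, whereas you supply a self-contained proof via the identity $(\theta_2/\theta_3)^4(2Y)=\lambda(2Y)$ and the nonvanishing of $\lambda'$ coming from the fact that $\lambda\colon\mathbb{H}_1\to\mathbb{C}\setminus\{0,1\}$ is a covering map. Your route is more elementary and makes the proposition independent of the external reference; the paper's route is shorter but imports the result. Both are valid, and your description of the linear embedding as a slice of the Segre map is equivalent to the paper's explicit matrix $A$.
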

\begin{proof}
Denote by $z$ the coordinate on $\mathbb{H}_1$. By Lemma \ref{lemma:thetareducible},
$$\Theta_g'\left(\begin{bmatrix}
 z & 0\\
 0 & \Pi'
\end{bmatrix}\right) \ = \ \left[\theta\begin{bmatrix}
 0\\
 1
\end{bmatrix}(2z;\,0)
\theta\begin{bmatrix}
 u\\
 0
\end{bmatrix}(2\Pi';\,0)\right]_{u\in\mathbb{Z}_2^{g-1}}.$$
Therefore all of $\mathbb{H}_1\times\{\Pi'\}$ is mapped to the point with homogeneous coordinates
$$\left[\theta\begin{bmatrix}
 u\\
 0
\end{bmatrix}(2\Pi';\,0)\right]_{u\in\mathbb{Z}_2^{g-1}}\ =\ [\theta_u(\Pi';0)]_{u\in \mathbb{Z}_2^{g-1}}\ =\ \Theta_{g-1}(\Pi').$$
(See  \eqref{Thg} for the definition of   $\Theta_{g-1}$).This proves the first statement.

For the proof of the second statement, the following notation is used: write $u\,
\in\, \mathbb{Z}_2^{g-1}$ as $(u',\,u_{g-1})$ with $u'\,\in\,\mathbb{Z}_2^{g-2}$ and $u_{g-1}\,\in\, \mathbb{Z}_2$. By Lemma \ref{lemma:thetareducible}, we have
\begin{equation}
\label{eq:secondrestriction}
\Theta_g'\left(\begin{bmatrix}
 \Pi' & 0\\
 0 & z
\end{bmatrix}\right) \ =\ \left[\theta\begin{bmatrix}
 0 & u'\\
 1 & 0
\end{bmatrix}(2\Pi';\,0)
\theta\begin{bmatrix}
 u_{g-1}\\
 0
\end{bmatrix}(2z;\, 0)\right]_{u\in\mathbb{Z}_2^{g-1}}.
\end{equation}
Consider then the following matrix with $2^{g-1}$ rows indexed by $\mathbb{Z}_2^{g-1}$ and $2$ columns indexed by $\mathbb{Z}_2$:
$$A_{u,v}\ :=\ \begin{cases}
 \theta\begin{bmatrix}
 0 & u'\\
 1 & 0
 \end{bmatrix}(2\Pi';\,0) & \text{if } v\,=\,u_{g-1}\\
 0 & \text{otherwise.}
\end{cases}$$
By \eqref{eq:secondrestriction}, the map $f\,:\,\mathbb{P}^1\, \longrightarrow\, \mathbb{P}^{2^{g-1}-1}$ induced by $A$ satisfies the following
$$\begin{tikzcd}
\mathbb{H}_1 \arrow[r, "\Theta_1"] \arrow[d, "\iota"'] & \mathbb{P}^2 \arrow[d, "f"] \\
\mathbb{H}_{g} \arrow[r, "\Theta'_g"'] & \mathbb{P}^{2^{g-1}-1} 
\end{tikzcd}$$
where $\iota$ it the map $z\, \longmapsto\, \begin{bmatrix}
 \Pi & 0\\
 0 & z
\end{bmatrix}$. It is clear from the definition that $A$ has the following form:
$$\begin{bmatrix}
 a_1 & 0\\
 0 & a_1\\
 \vdots & \vdots \\
 a_{2^{g-2}} & 0\\
 0& a_{2^{g-2}}
\end{bmatrix}$$
Consequently, $f$ is an embedding. The second part of the proposition then follows from the fact that $\Theta_1$ is an immersion, see \cite[p.~1490]{SM}.
\end{proof}

Consider the matrix 
\begin{equation}
\label{eq:thematrix}
M\ =\ \begin{bmatrix}
 A & 0\\
 0 & A
\end{bmatrix}\ \ \, \text{with}\ \ \, 
A\ =\ \begin{bmatrix}
 0 & I_{g-1}\\
 1 & 0
\end{bmatrix}
\end{equation}
($I_{g-1}$ is the $(g-1)\times (g-1)$ identity matrix). It is straightforward to check that $A^T\,=\,A^{-1}$, and that $M\,\in\, \operatorname{Sp}(2g,\mathbb{Z})$. The following is also immediate: if one denotes by $\cdot $ the action of $\operatorname{Sp}(2g,\mathbb{Z})$ on $\mathbb{H}_g$, then
\begin{equation}
\label{eq:mswaps}
M\cdot \begin{bmatrix}
 z & 0\\
 0 & \Pi'
\end{bmatrix}\ =\
\begin{bmatrix}
 \Pi' & 0\\
 0 & z
\end{bmatrix}\qquad \forall\,\ z\,\in\,\mathbb{H}_1,\,\forall\,\ \Pi'\,\in\,\mathbb{H}_{g-1}.
\end{equation}
Let $M\,:\,\mathbb{H}_g\, \longrightarrow\, \mathbb{H}_g$ be the map induced, by $M$, on
$\mathbb{H}_g$ through the action of $\operatorname{Sp}(2g,\mathbb{Z})$ on $\mathbb{H}_g$. By \eqref{eq:mswaps}, $M$ maps $\mathbb{H}_1\times \{\Pi'\}$ isomorphically onto $\{\Pi'\}\times \mathbb{H}_1$.

\begin{corollary}
\label{cor:nottoag}
The form $(\Theta'_g)^\ast \omega_{FS}$ on $\mathbb{H}_g$ (see \eqref{b3})
does not descend to $A_g$.
\end{corollary}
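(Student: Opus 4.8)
The plan is to use the standard descent criterion: a smooth form on $\mathbb{H}_g$ is the pullback of a form on $A_g\,=\,\mathbb{H}_g/\operatorname{Sp}(2g,\mathbb{Z})$ precisely when it is invariant under the action of $\operatorname{Sp}(2g,\mathbb{Z})$. Writing $\eta\,:=\,(\Theta'_g)^\ast\omega_{FS}$, it therefore suffices to exhibit a single element of $\operatorname{Sp}(2g,\mathbb{Z})$ whose induced action on $\mathbb{H}_g$ fails to preserve $\eta$. The natural candidate is the matrix $M$ of \eqref{eq:thematrix}: by \eqref{eq:mswaps} the induced biholomorphism of $\mathbb{H}_g$, still denoted $M$, carries $\mathbb{H}_1\times\{\Pi'\}$ isomorphically onto $\{\Pi'\}\times\mathbb{H}_1$ for every $\Pi'\,\in\,\mathbb{H}_{g-1}$.

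First I would fix such a $\Pi'$ and introduce the two inclusions $i_1\,:\,\mathbb{H}_1\times\{\Pi'\}\hookrightarrow\mathbb{H}_g$ and $i_2\,:\,\{\Pi'\}\times\mathbb{H}_1\hookrightarrow\mathbb{H}_g$, together with the biholomorphism $\psi\,:\,\mathbb{H}_1\times\{\Pi'\}\xrightarrow{\,\cong\,}\{\Pi'\}\times\mathbb{H}_1$ obtained by corestricting $M\circ i_1$ to its image, so that $M\circ i_1\,=\,i_2\circ\psi$. The essential input is Proposition \ref{prop:subsiegel}: since $\Theta'_g\big\vert_{\mathbb{H}_1\times\{\Pi'\}}$ is constant we have $i_1^\ast\eta\,=\,0$, whereas $\Theta'_g\big\vert_{\{\Pi'\}\times\mathbb{H}_1}$ is an immersion, so $i_2^\ast\eta$ is the pullback of $\omega_{FS}$ by an immersion of a one-dimensional complex manifold and is hence strictly positive, in particular nonzero.

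Then I would compute the restriction of $M^\ast\eta$ to $\mathbb{H}_1\times\{\Pi'\}$ by functoriality of pullback:
\[
i_1^\ast M^\ast\eta\ =\ (M\circ i_1)^\ast\eta\ =\ (i_2\circ\psi)^\ast\eta\ =\ \psi^\ast\big(i_2^\ast\eta\big).
\]
Because $\psi$ is a biholomorphism and $i_2^\ast\eta\,\neq\,0$, the right-hand side is nonzero. Now if $\eta$ descended to $A_g$ it would be $\operatorname{Sp}(2g,\mathbb{Z})$-invariant, forcing $M^\ast\eta\,=\,\eta$ and hence $i_1^\ast M^\ast\eta\,=\,i_1^\ast\eta\,=\,0$, in direct contradiction with the previous display. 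Thus $\eta$ is not invariant and cannot descend to $A_g$.

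I expect no serious obstacle in this final deduction: all the substance has been absorbed into Proposition \ref{prop:subsiegel} and into the construction of $M$ in \eqref{eq:thematrix}--\eqref{eq:mswaps}, so what remains is a clean restriction-and-pullback contradiction. The only points that deserve a word of care are the functoriality identity $M\circ i_1\,=\,i_2\circ\psi$, which is immediate from \eqref{eq:mswaps}, and the observation that a nonzero positive $(1,1)$-form remains nonzero after pullback by a biholomorphism; both are routine.
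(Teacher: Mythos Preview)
Your proof is correct and follows essentially the same approach as the paper: both use the symplectic matrix $M$ of \eqref{eq:thematrix} to swap $\mathbb{H}_1\times\{\Pi'\}$ and $\{\Pi'\}\times\mathbb{H}_1$, then invoke Proposition \ref{prop:subsiegel} to see that the form restricts to zero on one and to something nonzero on the other, contradicting invariance. If anything, your version is slightly more careful in handling the functoriality of pullback via the explicit maps $i_1,\,i_2,\,\psi$, whereas the paper's displayed chain of equalities leaves the identification of the two one-dimensional slices implicit.
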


\begin{proof}
Fix $\Pi'\,\in\, \mathbb{H}_{g-1}$. To argue by contradiction, 
assume that $(\Theta')^\ast \omega_{FS}$ descends to $A_g\,=\,\mathbb{H}_g/\operatorname{Sp}(2g,\mathbb{Z})$. 
Then $(\Theta'_g)^\ast \omega_{FS}$ must be invariant under the pullback using
the action of $M$ in \eqref{eq:thematrix} on $\mathbb{H}_g$. Since $M$ maps $\mathbb{H}_1\times \{\Pi'\}$ isomorphically onto $\{\Pi'\}\times \mathbb{H}_1$, 
$$(\Theta'_g)^\ast \omega_{FS}\big\vert_{\{\Pi'\}\times \mathbb{H}_1}\ =\ M^\ast(\Theta'_g)^\ast \omega_{FS}\big\vert_{\{\Pi'\}\times \mathbb{H}_1}\ =\
(\Theta'_g)^\ast \omega_{FS}\big\vert_{ \mathbb{H}_1\times \{\Pi'\}}.$$
But this is impossible because the left-hand side is nonzero by the second part of Proposition \ref{prop:subsiegel}, while the right-hand side is zero by the first part of the same proposition.
In view of this contradiction, we conclude that $(\Theta'_g)^\ast \omega_{FS}$
does not descend to $A_g$.
\end{proof}

\begin{lemma}
\label{lemma:limitsequence}
Let $y_0\,\in\,\mathbb{H}_1$ correspond to an elliptic curve $E_{y_0}$ with $\operatorname{Aut}(E_{y_0})\, =\, \{\pm 1\}$. Let 
$C'$ be a Riemann surface of genus $g-1$ whose Jacobian $J(C')$ is simple
with $\operatorname{Aut}(J(C'))\,=\,\{\pm 1\}$. 
Denote by 
$\Pi'\,\in\,\mathbb{H}_{g-1}$ a period matrix of $J(C')$, and
set $$\widetilde{y}_0\ =\ \begin{bmatrix}
 y_0 & 0\\
 0 & \Pi'
\end{bmatrix}.$$
Then there are points $x_n=[C_n]\,\in\, \widetilde M_g$ and vectors $w_n\, \in\, T_{x_n}\widetilde M_g$ such that
$$\lim_{n\to \infty} \Pi(x_n) = \widetilde{y}_0, \qquad
\lim_{n\to \infty} d\Pi (w_n)\ =\ w,$$
where $w$ is a generator of the tangent space $T_{\widetilde{y}_0} (\mathbb{H}_1\times \{\Pi'\})$.
\end{lemma}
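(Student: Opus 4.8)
The plan is to realize $\widetilde{y}_0$ as a limit of period matrices of smooth curves degenerating to the stable nodal curve $C_0\,=\,E_{y_0}\cup_p C'$, obtained by gluing $E_{y_0}$ and $C'$ transversally at one point $p$, and to extract the direction $w$ from the variation of the elliptic modulus. The decisive feature is that the node of $C_0$ is \emph{separating}: the dual graph of $C_0$ is a tree, so the generalized Jacobian of $C_0$ is the \emph{compact} product $J(E_{y_0})\times J(C')\,=\,E_{y_0}\times J(C')$, and in a symplectic basis adapted to the two components its period matrix is exactly $\widetilde{y}_0$. Since $\Pi'$ is the period matrix of the genuine Jacobian $J(C')$, this exhibits $\widetilde{y}_0$ as a limit of Jacobian period matrices, i.e.\ a point in the closure of the Torelli locus $\Pi(\widetilde{M}_g)$. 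The hypotheses $\Aut(E_{y_0})\,=\,\{\pm1\}$, $\Aut(J(C'))\,=\,\{\pm1\}$ and the simplicity of $J(C')$ serve to make $C_0$ stable (each component carries a single node, and $g\geq2$) and to keep the construction below unobstructed.

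First I would construct a two--parameter family. Let $U\subset\mathbb{H}_1$ be a neighbourhood of $y_0$ and $D\subset\C$ a small disk about $0$. Applying the plumbing construction at the node (local coordinates with $uv\,=\,t$), I obtain a family $\mathcal{C}\lra U\times D$ whose fibre over $(z,t)$ is $E_z\cup_p C'$ when $t\,=\,0$ and a smooth genus $g$ curve $C_{z,t}$ when $t\,\neq\,0$; here $E_z$ is the elliptic curve of modulus $z$ while $C'$ is held fixed. Choosing a continuously varying symplectic basis $\mathcal{B}_{z,t}$ of $H_1(C_{z,t},\Z)$ in which, as $t\to0$, the cycles $\widehat a_1,\widehat b_1$ sit on the elliptic component and $\widehat a_2,\dots,\widehat a_g,\widehat b_2,\dots,\widehat b_g$ sit on $C'$, I lift the family to a holomorphic map $\Upsilon\,:\,U\times D^{*}\lra\widetilde{M}_g$, $(z,t)\longmapsto[C_{z,t},\mathcal{B}_{z,t}]$.

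The key analytic input is that the period matrix $\Pi(z,t)\,:=\,\Pi(C_{z,t})$ extends holomorphically across $t\,=\,0$ with $\Pi(z,0)\,=\,\big[\begin{smallmatrix} z & 0\\ 0 & \Pi'\end{smallmatrix}\big]$. Indeed, a separating vanishing cycle is null--homologous, so the Dehn twist about it acts trivially on $H_1$; the monodromy of the family around $t\,=\,0$ is therefore trivial, and $\Pi(z,t)$ is single--valued and bounded on $U\times D^{*}$, hence extends holomorphically over $U\times D$ by Riemann's removable singularity theorem, its value at $t\,=\,0$ being the period matrix of $E_z\times J(C')$ computed above. Since $\mathbb{H}_g$ is open in the vector space $\Sym^2\C^g$, all its tangent spaces are canonically identified with $\Sym^2\C^g$, with global frame $\{\partial/\partial Z_{ij}\}$; in particular $d\Pi(w_n)$ and $w$ live in one and the same space and their comparison is meaningful. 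Now fix any sequence $t_n\to0$ in $D^{*}$ and set $x_n\,:=\,\Upsilon(y_0,t_n)$ and $w_n\,:=\,d\Upsilon\big(\tfrac{\partial}{\partial z}\big|_{(y_0,t_n)}\big)\in T_{x_n}\widetilde{M}_g$. Then $\Pi(x_n)\,=\,\Pi(y_0,t_n)\to\Pi(y_0,0)\,=\,\widetilde{y}_0$, giving the first limit. For the second, the entries of $d\Pi(w_n)$ are $\partial_z\Pi_{ij}(y_0,t_n)$; holomorphicity of $\Pi$ on $U\times D$ forces $\partial_z\Pi_{ij}(y_0,t_n)\to\partial_z\Pi_{ij}(y_0,0)$, which equals $1$ for $(i,j)\,=\,(1,1)$ and $0$ otherwise. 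Hence $d\Pi(w_n)\to\partial/\partial Z_{11}$, a generator of $T_{\widetilde{y}_0}(\mathbb{H}_1\times\{\Pi'\})$, as required.

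I expect the main obstacle to be the holomorphic extension of the period matrix across the separating boundary divisor together with the attendant interchange of the limit $t_n\to0$ with the $z$--derivative; both rest on the classical degeneration theory of period matrices, concretely on the triviality of the monodromy at a separating node. A secondary point is the simultaneous continuous choice of adapted markings $\mathcal{B}_{z,t}$, which is what pins the limit to the block matrix $\widetilde{y}_0$ itself rather than to an $\Sp(2g,\Z)$--translate of it; here the minimality of the automorphism groups guarantees that moving $z$ genuinely moves the marked curve in $\widetilde{M}_g$, so that $w_n\neq0$ and $d\Pi(w_n)$ is computed by honest differentiation.
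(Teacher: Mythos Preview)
Your proof is correct and follows the same core strategy as the paper: degenerate to the compact-type curve $E_{y_0}\cup_p C'$, use that the period map extends holomorphically across the separating boundary divisor (trivial monodromy), and differentiate in the direction of the elliptic modulus.

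The technical execution differs slightly. The paper works with a Kuranishi family $\phi:\mathcal X\to B$ centered at $E_{y_0}\cup_p C'$, pulls back along the classifying map $\psi:U\to B$ of the one-parameter family $\{E_y\cup_p C'\}_{y\in U}$, and then must argue that the abstractly chosen lift $\widetilde j$ satisfies $\widetilde j\circ\psi=\alpha$ exactly (not just up to $\Sp(2g,\Z)$). This is done via a Baire argument producing a single $\gamma\in\Sp(2g,\Z)$ with $\gamma\cdot\alpha=\widetilde j\circ\psi$, and then the hypotheses $\Aut(E_{y_0})=\Aut(J(C'))=\{\pm1\}$ and $J(C')$ simple force the stabilizer of $\widetilde y_0$ to be trivial, hence $\gamma=1$. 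You instead build an explicit two-parameter plumbing family over $U\times D$ with a hand-picked adapted symplectic basis, which pins down the limiting period matrix to $\bigl[\begin{smallmatrix}z&0\\0&\Pi'\end{smallmatrix}\bigr]$ directly, bypassing the Baire step. Both routes are valid; yours is more concrete, the paper's is more structural.

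One correction: your last paragraph misattributes the role of the automorphism hypotheses. They are not what guarantees that ``moving $z$ genuinely moves the marked curve'' or that $w_n\neq0$; that follows simply from the fact that non-isomorphic elliptic curves give non-isomorphic nodal curves, hence distinct nearby smooth fibres. In the paper's argument the hypotheses are used precisely to kill the $\Sp(2g,\Z)$-ambiguity in the lift. In your explicit-basis approach that ambiguity never arises, so the hypotheses are in fact not needed for your argument to go through.
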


\begin{proof}
Fix a point $p\in C'$. 
For $y\in\mathbb{H}_1$ set
$$E_y \ :=\ \frac{\mathbb{C}}{\mathbb{Z}+y\mathbb{Z}},$$
and denote by $E_y\cup_p C'$ the curve of compact type obtained by identifying the point $p$ of $C'$ with the origin of $E_y$.
Fix $p\,\in\, C'$. Fix a Kuranishi family $\phi\,:\, \mathcal X \, \longrightarrow\, B$,
where $B$ is a polydisk centered at the origin, and the central fibre is isomorphic to $E_{y_0}\cup_pC'$. Since $E_{y_0}\cup_p C'$ is a curve of compact type, we can assume that $B \subset \overline{M}_{g} -  \Delta_0$.
Let $$\bar j\ :\ B\ \longrightarrow\ A_g\qquad b\longmapsto J(\phi^{-1}(b))$$ denote the extended period map. 
Locally there is a lift 
$$\widetilde{j} \,:\, B \,\longrightarrow \,\mathbb H_g$$ 
of $\bar j$
such that $\widetilde{j} (0) \,=\, \widetilde{y}_0$. 
It can be  constructed as follows: first one fixes the symplectic basis of $H_1(C',\mathbb Z)$ with respect to which  $\Pi'$ is the period matrix of $C'$. Next 
one adds to this a symplectic basis of $H_1(E_{y_0}, \mathbb Z)$.
The result is a basis
$$\mathcal{B}_0 \ =\ \{a_1(0),\,\cdots ,\, a_g(0),\, b_1(0),\, \cdots ,\, b_g(0) \}$$ 
of $H_1(\phi^{-1}(0),\mathbb{Z})$, which extends 
to a symplectic basis
$$\mathcal{B}(b)\ =\ \{a_1(b),\, \cdots ,\, a_g(b),\, b_1(b),\, \cdots ,\, b_g(b)\}$$
of $H_1(\phi^{-1}(b),\, \mathbb{Z})$ for all $b\in B$.
Set
\begin{equation}
\label{eq:defB0}
B_0 \,:= \, \{b\in B\,\big\vert \,\phi^{-1}(b)\text{ is smooth}\} \,\subset \, B.
\end{equation} 
By sending $b\in B_0$ to the pair $(\phi^{-1}(b), \mathcal{B}(b))$ we obtain a map $\eta\,:\, B_0 \,\longrightarrow \,\widetilde{M}_g$ to the Torelli space.
By construction, we have
\begin{gather}
\label{tildejB}
 \widetilde{j} \big\vert_{B_0}\ =\ \Pi \circ \eta,
\end{gather}
where $\Pi\,:\,\widetilde{M}_g\, \longrightarrow\, \mathbb{H}_g$ is the period map \eqref{defPi}.

Next, consider the family $\mathcal E \,\longrightarrow\, \mathbb H_1$, with fiber $E_y\cup _p C'$ over $y$.
There is a map $\psi \,:\, U \,\longrightarrow \,B$ defined on an open neighbourhood $U$ of $y_0$ in $\mathbb H_1$ such that $\mathcal E \,\cong\, \psi^*\mathcal X$.
In particular, we have $\psi(y_0)\,=\,0\,\in\, B$.

We claim that
\begin{equation}\label{d1}
\widetilde{j} \circ \psi\ =\ \alpha,
\end{equation}
where $\alpha$ is the map
\begin{gather*}
\alpha\ :\ U\ \longrightarrow\ \mathbb{H}_g,\qquad \alpha (y)\ :=\ \begin{bmatrix}
 y & 0 \\ 0 & \Pi'
 \end{bmatrix}.
\end{gather*}
To prove this, note that for each $y\in U$ we have
$$\bar j\circ\psi(y)\, =\, \bar j ([E_y \cup_p C'])\, =\, [E_y \times JC']\, =\, [\alpha(y)],$$
where $[x]$ denotes the class of $x\in\mathbb{H}_g$ in $A_g$. 
Since $\widetilde{j}$ lifts $\bar j$,  we also have  
\begin{gather*}
    \left[\widetilde{j} \circ \psi(y)\right] = \bar {j} \circ \psi (y) =[\alpha(y)].
\end{gather*}
 Hence 
 there is a $\gamma_y \,\in \,\operatorname{Sp}(2g,\mathbb{Z})$ such that $\gamma_y \cdot\alpha(y)\, =\, \widetilde{j} \circ \psi (y)$. 
Since $\operatorname{Sp}(2g,\mathbb Z)$ is countable, Baire's theorem ensures that $\gamma_y=\gamma $ is independent of $y$. 
Moreover, $\gamma \cdot \widetilde{y}_0\, =\, \widetilde{y}_0$, because $\alpha(y_0)\, =\, \widetilde{y}_0\,=\,\widetilde{j} (0)\, =\,
\widetilde{j} \psi(y_0)$. 
By the assumptions on $y_0$ and $J(C')$ the abelian variety $E_{y_0}\times J(C')$ does not have automorphisms.
In particular, the stabilizer of $\alpha(y_0)$ in $\operatorname{Sp}(2g,\mathbb Z)$ is trivial. 
Hence $\gamma \,=\,1$, which proves \eqref{d1}

Consider a generator $\nu$ of $T_{y_0}{\mathbb H}_1$ , and set
$v\,:=\, d\psi(\nu) \,\in\, T_0B$. 
As $\widetilde{j} \circ \psi\, =\, \alpha$, we have $d\widetilde{j} (v)\, =\, d\alpha(\nu)$.
Then $d\widetilde{j} (v) \,\neq\, 0$ since $\alpha$ is an embedding. 
Pick a sequence of points $x_n \,\in\, B_0$ 
and one of tangent vectors $v_n \,\in\, T_{x_n}B_0$ such that $x_n \,\to\, 0$ and $v_n \,\to\, v$.
Put $w_n\,:=\, d\eta(v_n)$. Then by \eqref{tildejB}, 
$$d\Pi(w_n ) \,=\,d\Pi (d\eta (v_n))\,=\, d\widetilde{j} (v_n)\ \to\ d\widetilde{j} (v)\, =\,d
\alpha(\nu).$$
Since $w \,:=\, d\alpha(\nu)$ is a generator of $T_{\widetilde{y}_0}({\mathbb H}_1 \times \{\Pi'\})$, the lemma is proved.
\end{proof}

\begin{proposition}
The section $\debar\beta^P$ constructed in \eqref{eq:defbetap}
does not descend to $M_g$.
\end{proposition}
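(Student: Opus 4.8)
The plan is to argue by contradiction, reducing the statement to a failure of $\operatorname{Sp}(2g,\mathbb{Z})$--invariance on the Torelli space and then producing that failure with the help of the degeneration supplied by Lemma \ref{lemma:limitsequence}. Recall that the composite $\widetilde{M}_g\longrightarrow R_g\xrightarrow{\ \xi\ }M_g$ is exactly the quotient map $q$ of $\widetilde{M}_g$ by the $\operatorname{Sp}(2g,\mathbb{Z})$--action, and that $\Pi$ is $\operatorname{Sp}(2g,\mathbb{Z})$--equivariant. Suppose $\debar\beta^P$ descended to $M_g$, say $\debar\beta^P=\xi^\ast\eta$ for a $(1,1)$--form $\eta$ on $M_g$. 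Writing $\omega$ for the pullback of $\debar\beta^P$ to $\widetilde{M}_g$, we would get $\omega=q^\ast\eta$; since $q\circ\gamma=q$ for every $\gamma\in\operatorname{Sp}(2g,\mathbb{Z})$, this forces $\gamma^\ast\omega=\omega$, i.e. $\omega$ is $\operatorname{Sp}(2g,\mathbb{Z})$--invariant. By Corollary \ref{cor:equivalentdebar} we have $\omega=8\pi\,\Pi^\ast(\Theta'_g)^\ast\omega_{FS}$, so the whole problem reduces to showing that this pullback is \emph{not} $\operatorname{Sp}(2g,\mathbb{Z})$--invariant.

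To exhibit the failure of invariance I would use the matrix $M$ of \eqref{eq:thematrix}, which by \eqref{eq:mswaps} swaps $V_1:=\mathbb{H}_1\times\{\Pi'\}$ and $V_2:=\{\Pi'\}\times\mathbb{H}_1$, together with the two halves of Proposition \ref{prop:subsiegel}: the form $(\Theta'_g)^\ast\omega_{FS}$ vanishes identically along $V_1$ (where $\Theta'_g$ is constant) but is strictly positive on tangent vectors to $V_2$ (where $\Theta'_g$ is an immersion and $\omega_{FS}$ is positive definite). The obstruction is that $V_1$ and $V_2$ are \emph{not} contained in the image $\Pi(\widetilde{M}_g)$ of the Torelli map, only in its closure, so one cannot simply restrict $\omega$ to them; this is precisely the role of Lemma \ref{lemma:limitsequence}.

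Concretely, choose $y_0$, $C'$, $\Pi'$ and $\widetilde{y}_0\in V_1$ as in Lemma \ref{lemma:limitsequence}, obtaining points $x_n\in\widetilde{M}_g$ and tangent vectors $w_n\in T_{x_n}\widetilde{M}_g$ with $\Pi(x_n)\to\widetilde{y}_0$ and $d\Pi(w_n)\to w$, where $w$ generates $T_{\widetilde{y}_0}V_1$. By continuity of the smooth form $(\Theta'_g)^\ast\omega_{FS}$ on $\mathbb{H}_g$,
\begin{equation*}
\omega_{x_n}(w_n,\overline{w_n})=8\pi\,(\Theta'_g)^\ast\omega_{FS}\big(\Pi(x_n)\big)\big(d\Pi(w_n),\overline{d\Pi(w_n)}\big)\ \longrightarrow\ 8\pi\,(\Theta'_g)^\ast\omega_{FS}(\widetilde{y}_0)(w,\overline{w})=0,
\end{equation*}
the last equality by the first part of Proposition \ref{prop:subsiegel}. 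On the other hand, applying $M$ and using $\Pi\circ M=M\circ\Pi$ with $M\widetilde{y}_0\in V_2$, the chain rule gives $d\Pi(dM\,w_n)=dM\,d\Pi(w_n)\to dM\,w$, and $dM\,w$ generates $T_{M\widetilde{y}_0}V_2$ since $M$ carries $V_1$ isomorphically onto $V_2$. Hence
\begin{equation*}
\omega_{Mx_n}(dM\,w_n,\overline{dM\,w_n})\ \longrightarrow\ 8\pi\,(\Theta'_g)^\ast\omega_{FS}(M\widetilde{y}_0)(dM\,w,\overline{dM\,w})>0
\end{equation*}
by the second part of Proposition \ref{prop:subsiegel}. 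But $\operatorname{Sp}(2g,\mathbb{Z})$--invariance of $\omega$ forces $\omega_{Mx_n}(dM\,w_n,\overline{dM\,w_n})=\omega_{x_n}(w_n,\overline{w_n})$ for every $n$, so the two limits must coincide; this contradicts that one is $0$ and the other strictly positive. Therefore $\omega$ is not invariant, and $\debar\beta^P$ does not descend to $M_g$.

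The step I expect to be the main obstacle is the passage through the boundary of the Torelli locus: the clean vanishing/positivity dichotomy lives on $V_1,V_2\subset\mathbb{H}_g$, which meet $\Pi(\widetilde{M}_g)$ only in the limit, so the argument hinges on realizing both the base point $\widetilde{y}_0$ and a tangent direction along $V_1$ as limits of genuine Torelli data. This is exactly the content of Lemma \ref{lemma:limitsequence}, whose proof rests on the curves of compact type $E_y\cup_p C'$ and a Baire--category argument to pin down the symplectic lift; once that lemma is available, the remainder is the continuity--and--equivariance bookkeeping carried out above. (Note that Corollary \ref{cor:nottoag} is the analogous non--descent statement on $\mathbb{H}_g$, but it does not by itself suffice, precisely because $\Pi(\widetilde{M}_g)$ is only a proper subvariety of $\mathbb{H}_g$.)
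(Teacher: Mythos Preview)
Your proof is correct and follows essentially the same approach as the paper's own proof: contradiction via $\operatorname{Sp}(2g,\mathbb{Z})$--invariance on $\widetilde{M}_g$, the swap matrix $M$ of \eqref{eq:thematrix}, the degeneration supplied by Lemma \ref{lemma:limitsequence}, and the vanishing/positivity dichotomy of Proposition \ref{prop:subsiegel}. Your organization is in fact slightly cleaner than the paper's, since you directly compare the two numerical limits at a single point rather than first passing through the statement that the restrictions to $V_1$ and $V_2$ agree for generic $y_0$; but the substance is the same.
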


\begin{proof}
Assume by contradiction that $\debar \beta^P$  descends to the quotient $M_g\,=\,\widetilde{M}_g/\operatorname{Sp}(2g,\mathbb{Z})$. Then $\frac{1}{8\pi}\debar\beta^P\,=\,\Pi^\ast(\Theta')^\ast \omega_{FS}$ must be $M$-invariant, where $M:\widetilde{M}_g\,\longrightarrow\, \widetilde{M}_g$ the map is induced by the matrix in \eqref{eq:thematrix}. Since
 the period map $\Pi:\widetilde M_g\longrightarrow \mathbb{H}_g$ is $\operatorname{Sp}(2g, \mathbb{Z})$--equivariant, we get
$$0\ =\ \Pi^\ast(\Theta')^\ast \omega_{FS} - M^\ast\Pi^\ast(\Theta')^\ast \omega_{FS}\ = \ \Pi^\ast\left((\Theta')^\ast \omega_{FS} - M^\ast(\Theta')^\ast \omega_{FS}\right).$$
Fix a Riemann surface $C'$ as in the hypothesis of Lemma \ref{lemma:limitsequence}, and let $\Pi'$ be a period matrix of $C'$.
Choose a generic point $y_0\,\in\,\mathbb{H}_1$. The point 
$y_0$ satisfies the hypothesis of Lemma \ref{lemma:limitsequence}, and therefore 
there is a sequence of tangent vectors $w_n\,\in\, T \widetilde M_g$ such that 
$\lim d\Pi (w_n)\,=\,w$, where $w$ is a generator of the tangent space $T (\mathbb{H}_1\times\{\Pi'\})$ at the point $\begin{bmatrix}
 y_0 & 0\\
 0 & \Pi'
\end{bmatrix}$.
Then we obtain the following:
$$0\,=\,\left((\Theta')^\ast \omega_{FS} - M^\ast(\Theta')^\ast \omega_{FS}\right)\left(d \Pi w_n, \,\overline{d \Pi w_n}\right).$$
Taking the limit $n\,\to\, \infty$ we get that
$$M^\ast (\Theta')^\ast \omega_{FS}(w,\, \overline{w})\ =\ (\Theta')^\ast \omega_{FS}(w,\,\overline{w}).$$
The last equation is true for the generic $w\,\in\, T(\mathbb{H}_1\times \{\Pi'\})$. Therefore,
$$(\Theta')^\ast \omega_{FS}\big\vert_{\{\Pi'\}\times \mathbb{H}_1}\ =\ M^\ast(\Theta')^\ast \omega_{FS}\big\vert_{\{\Pi'\}\times \mathbb{H}_1}\ =\ (\Theta')^\ast \omega_{FS}\big\vert_{ \mathbb{H}_1\times \{\Pi'\}}.$$
This is impossible by the same argument in  the proof of Corollary \ref{cor:nottoag}.
In view of this contradiction, the proof of the proposition is complete.
\end{proof}

\begin{corollary}
The answer to Question \ref{question:one} is negative.
\end{corollary}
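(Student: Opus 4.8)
The plan is to deduce the statement immediately from the Proposition just established, namely that $\debar\beta^P$ does not descend to $M_g$. The only real work is to translate a positive answer to Question \ref{question:one} into the assertion that $\debar\beta^P$ descends, after which the preceding Proposition produces a contradiction. Thus I would argue by contradiction and keep the substantive analytic content packaged in what has already been proved.

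First I would unwind the definitions. A positive answer to Question \ref{question:one} means precisely that the projective structure $\beta^P_\pi\,\in\,\mathcal P(C)$ depends only on the isomorphism class $[C]\,\in\, M_g$ and not on the chosen nontrivial $2$-torsion point $\eta$, equivalently not on the cover $\pi$. Phrased in terms of moduli, this says that the section $\beta^P$ of $\xi^\ast V_g\,\longrightarrow\, R_g$ is the pullback $\xi^\ast\bar\beta$ of a smooth section $\bar\beta$ of $V_g\,\longrightarrow\, M_g$, where $\xi\,:\,R_g\,\longrightarrow\, M_g$ is the forgetful covering.

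Next I would pass to the Torelli space. Recall that $\widetilde M_g$ is a common cover of both $R_g$ and $M_g$: the composite $\widetilde M_g\,\longrightarrow\, R_g\,\xrightarrow{\,\xi\,}\, M_g$ agrees with the quotient map $q\,:\,\widetilde M_g\,\longrightarrow\, M_g\,=\,\widetilde M_g/\Sp(2g,\mathbb Z)$, since both send a marked curve $(C,\,\mathcal B)$ to $[C]$. The section $\beta^P$ on $\widetilde M_g$ of \eqref{eq:defbetap} is, by construction, the pullback of the section $\beta^P$ on $R_g$. Hence, under the assumption above, $\beta^P\,=\,q^\ast\bar\beta$ on $\widetilde M_g$. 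Because $\debar$ commutes with pullback along the holomorphic map $q$, and the affine-bundle structure of $\xi^\ast V_g$ pulls back compatibly, we obtain $\debar\beta^P\,=\,q^\ast(\debar\bar\beta)$; in other words $\debar\beta^P$ is the pullback via $q$ of the $(1,\,1)$-form $\debar\bar\beta$ on $M_g$, i.e.\ $\debar\beta^P$ descends to $M_g$.

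This contradicts the preceding Proposition, which asserts precisely that $\debar\beta^P$ does not descend to $M_g$. Therefore the assumption fails, $\beta^P_\pi$ genuinely depends on $\eta$, and the answer to Question \ref{question:one} is negative. The entire analytic weight of the result lies in that Proposition, whose proof rests on Corollary \ref{cor:equivalentdebar} together with the degeneration argument of Lemma \ref{lemma:limitsequence} and the failure of $M$-invariance of $(\Theta'_g)^\ast\omega_{FS}$ from Proposition \ref{prop:subsiegel}. The hard part of the present deduction is therefore not computational at all but conceptual: cleanly identifying ``independence of $\eta$'' with descent of the section, which is why I would spell out the Torelli-space comparison through the common cover $q$ rather than attempt to argue descent directly on $R_g$.
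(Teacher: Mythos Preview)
Your proposal is correct and follows essentially the same approach as the paper: both deduce the result immediately from the preceding Proposition that $\debar\beta^P$ does not descend to $M_g$, by noting that a positive answer to Question~\ref{question:one} would force $\beta^P$ (and hence $\debar\beta^P$) to descend. The paper's proof is terser and does not spell out the passage through the Torelli space, but the logical content is identical.
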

\begin{proof}
    Since $\debar \beta^P$ does not descend to $M_g$, neither does $\beta^P$. In other words $\beta^P$ is not a pullback of a section of $V_g$ via the forgetful map $\xi: R_g \lra M_g$.
    In particular there exist  curves $C$ with two distinct étale covers $\pi_i\,:\,\widetilde C_i\,
 \longrightarrow\, C$, for $i\,=\,1,\,2$, such that
$$\beta^P_{\pi_1} \ \neq\ \beta^P_{\pi_2}.$$
\end{proof}

\end{document}